\documentclass[article]{amsart}
\usepackage{cases}
\usepackage{amsmath, amsfonts, pifont, amssymb, xcolor, mathrsfs}
\usepackage{verbatim}
\usepackage[bookmarks=true]{hyperref}
\usepackage[margin=1 in]{geometry}
\usepackage[numbers,sort]{natbib}
\usepackage{mathtools}
\mathtoolsset{showonlyrefs}
\newtheorem{theorem}{Theorem}[section]
\newtheorem{lemma}[theorem]{Lemma}
\newtheorem{proposition}[theorem]{Proposition}
\newtheorem{corollary}[theorem]{Corollary}

\newtheorem{conj}[theorem]{Conjecture}
\newcommand{\R}{\mathbb{R}}
\newcommand{\T}{\mathbb{T}}

\newcommand{\bS}{\mathbb{S}}
\newcommand{\bR}{\mathbb{R}}
\newcommand{\bZ}{\mathbb{Z}}

\newcommand{\bT}{\mathbb{T}}

\newcommand{\beq}{\begin{equation}}
\newcommand{\eeq}{\end{equation}}
\newcommand{\beqq}{\begin{equation*}}
\newcommand{\eeqq}{\end{equation*}}

\newcommand{\lea}{\lesssim}

\newcommand{\gea}{\gtrsim}

\newcommand{\w}{\widetilde}

\theoremstyle{definition}
\newtheorem{definition}[theorem]{Definition}

\theoremstyle{remark}
\newtheorem{remark}{Remark}[section]

\numberwithin{equation}{section}

\def \l {\left}
\def \r {\right}

\def \lea {\lesssim}
\def \ca { \mathbf{1}}

%    Absolute value notation

%    Blank box placeholder for figures (to avoid requiring any
%    particular graphics capabilities for printing this document).

%\usepackage{times}

\numberwithin{equation}{section}

\allowdisplaybreaks[4]

\keywords{Bilinear eigenfunction estimate, multilinear eigenfunction estimate, Lie group, representation of $\mathrm{SU}(2)$, Clebsch--Gordan coefficient, 
anisotropic Strichartz estimate, refined bilinear Strichartz estimate, energy-critical NLS, well-posedness}

\begin{document}

\address{Yangkendi Deng
\newline \indent Department of Mathematics and Statistics, Beijing Institute of Technology.
\newline \indent Key Laboratory of Algebraic Lie Theory and Analysis of Ministry of Education.
\newline \indent  Beijing, China. \vspace{-0.2cm}}
\email{dengyangkendi@bit.edu.cn\vspace{-0.1cm}}

\address{Yunfeng Zhang
\newline \indent Department of Mathematical Sciences, University of Cincinnati, 2815 Commons Way
\newline \indent Cincinnati, OH, U.S.\vspace{-0.2cm}}
\email{zhang8y7@ucmail.uc.edu\vspace{-0.1cm}}

\address{Zehua Zhao
\newline \indent Department of Mathematics and Statistics, Beijing Institute of Technology.
\newline \indent Key Laboratory of Algebraic Lie Theory and Analysis of Ministry of Education.
\newline \indent  Beijing, China. \vspace{-0.2cm}}
\email{zzh@bit.edu.cn}

\title[Bilinear eigenfunction estimate, anisotropic Strichartz estimate, and energy-critical NLS]{
Sharp bilinear eigenfunction estimate, anisotropic Strichartz estimate, and energy-critical NLS
}

\author{Yangkendi Deng, Yunfeng Zhang and Zehua Zhao}

\subjclass[2020]{Primary: 35Q55; Secondary: 22E30, 35P20, 35R01, 37K06, 37L50, 58J50}

\begin{abstract}

We establish sharp bilinear eigenfunction estimates for the Laplace–Beltrami operator on the standard three-sphere $\mathbb{S}^3$, eliminating the logarithmic loss that has persisted in the literature since the pioneering work of Burq, Gérard, and Tzvetkov over twenty years ago. This completes the theory of multilinear eigenfunction estimates on the standard spheres. Our approach relies on viewing $\mathbb{S}^3$ as the compact Lie group $\mathrm{SU}(2)$ and exploiting its representation theory. Motivated by applications to the energy-critical nonlinear Schr\"odinger equation (NLS) on $\mathbb{R} \times \mathbb{S}^3$, we also prove a refined anisotropic Strichartz estimate on the cylindrical space $\mathbb{R}_{x_1} \times \mathbb{T}_{x_2}$ of $L^\infty_{x_2}L^4_{t,x_1}$-type, adapted to certain spectrally localized functions. The argument relies on multiple sharp measure estimates and a robust kernel decomposition method. Combining these two key ingredients, we derive a refined bilinear Strichartz estimate on $\mathbb{R} \times \mathbb{S}^3$, which in turn yields small-data global well-posedness for the above mentioned NLS in the energy space.

\end{abstract}

\maketitle

\iffalse 
\noindent \textbf{Keywords}: bilinear eigenfunction estimate, representation of $\mathrm{SU}(2)$, Clebsch--Gordan coefficient, refined bilinear Strichartz estimate, $L^\infty_{x_2}L^p_{t,x_1}$-type Strichartz estimate, waveguide manifold, well-posedness, energy-critical NLS.
\bigskip
\fi 

\setcounter{tocdepth}{1}
\tableofcontents

\parindent = 10pt     
\parskip = 8pt

\section{Introduction and main results}

Let $\R$ denote the real line, and let $\bS^3$ denote the standard three-sphere. We study the initial value problem for the cubic nonlinear Schr\"odinger equation (NLS) on the product manifold $\mathbb{R} \times \mathbb{S}^3$, 
\begin{equation}\label{eq: maineq}
\begin{cases}
iu_t + \Delta u = \pm|u|^2 u, \\
u(0,x,y) = u_0(x,y),
\end{cases}
\end{equation}
where $u(t,x,y)$ is a complex-valued function on the spacetime $\R_t\times\R_x\times\bS^3_y$. 
For strong solutions $u$ of \eqref{eq: maineq}, we have energy conservation,
\begin{align}\label{eq: energy}
    E(u(t))=\frac12\int_{\R\times\bS^3} |\nabla u(t,x,y)|^2 \ {\rm d}x\ {\rm d}y\pm \frac14\int_{\R\times\bS^3}|u(t,x,y)|^4 \ {\rm d}x\ {\rm d}y = E(u_0),
\end{align}
and mass conservation,
\begin{align}\label{eq: mass}
    M(u(t))=\frac12\int_{\R\times\bS^3}|u(t,x,y)|^2\ {\rm d}x\ {\rm d}y=M(u_0). 
\end{align}
The above model, as the cubic NLS on a four-dimensional manifold, is referred to energy-critical since the energy of the cubic NLS on $\R^4$
is invariant under its natural scaling symmetry.

The main goal of this paper is to establish small-data global well-posedness for \eqref{eq: maineq} in the critical space, namely the energy space $H^1(\R\times\bS^3)$. Previously established energy-critical models in four dimensions include $\R^4$, $\mathbb{H}^4$, $\T^4$, $\R^m\times\T^{4-m}$, and in three dimensions include $\R^3$, $\mathbb{H}^3$, $\T^3$, $\R^m\times\T^{3-m}$, $\bS^3$, $\T\times\bS^2$; for references, see Tables \ref{tab:4D} and \ref{tab:3D}.\footnote{For the energy-critical NLS on higher-dimensional Euclidean spaces and tori, we refer to \cite{KK24,Kwa24,KV10,Vis07}. For mass-critical NLS, we refer to \cite{dodson2012global,dodson2015global,dodson2016global1,dodson2016global2}.} 

In particular, the breakthrough result of Herr, Tataru, and Tzvetkov on $\T^3$ \cite{HTT11} established the first instance of energy-critical well-posedness on a compact manifold, which also paved the way for the later study on other product manifolds such as $\R^m\times\T^{4-m}$.

There are key differences between the analysis of NLS on flat spaces such as Euclidean spaces and tori, and on positively curved compact manifolds such as spheres. Weaker dispersion for the Schr\"odinger equation, combined with the absence of a Fourier transform, greatly hinders the analysis on these latter manifolds. A notable example is the four-sphere $\bS^4$, which remains out of reach due to the failure of the $L^4$-Strichartz estimate as shown by Burq, G\'erard, and Tzvetkov \cite{BGT04}. 
In comparison, on $\R^m\times\T^{4-m}$, $L^p$-Strichartz estimates are available for $p<4$, which lay the foundation for the well-posedness theory. 
On the hybrid model $\mathbb{R} \times \mathbb{S}^3$, which couples Euclidean and spherical components, the $L^4$-Strichartz estimate is available, but no $L^p$-Strichartz estimate for $p<4$ is presently known, rendering the well-posedness theory delicate. 

The absence of a Fourier transform on a general compact manifold is first remedied by the spectral theory of the Laplace--Beltrami operator. For a waveguide manifold such as $\R\times\bS^3$, it is also clear that eigenfunctions of the Laplace--Beltrami operator on the compact factor, in our case $\mathbb{S}^3$, play an essential role in the analysis of NLS, as those are static solutions to the linear Schr\"odinger equation. 
Sogge established foundational $L^p$ estimates of eigenfunctions on compact manifolds \cite{Sog88}, which are sharp on spheres. However, for nonlinear analysis, interactions among eigenfunctions are equally if not more important. Such interactions are quantified in the pioneering works \cite{BGT05,BGT052} of Burq, G\'erard, and Tzvetkov in terms of bilinear and multilinear estimates. They have been highly valuable in the well-posedness theory of NLS on compact manifolds, especially spheres or product manifolds that have spherical factors. For example, using sharp bilinear eigenfunction estimates on $\mathbb{S}^2$,  
the authors proved in \cite{BGT05} uniform local well-posedness of the cubic NLS in the Sobolev space $H^s(\mathbb{S}^2)$, for the range $s>\frac14$ that is sharp except the endpoint% $s=\frac14$
. Similarly, on $\mathbb{S}^3$ and $\T\times\bS^2$, trilinear eigenfunction estimates on $\mathbb{S}^3$ and $\mathbb{S}^2$ play a central role in Burq--Gérard--Tzvetkov’s proof of well-posedness for the sub-quintic NLS \cite{BGT052} in the energy space, and in Herr's and Herr--Strunk's later refinement establishing well-posedness for the quintic NLS \cite{Her13,HS15}. 

For the cubic NLS on $\R\times \bS^3$, bilinear eigenfunction estimates on the factor $\bS^3$ are vital. 
Let $f,g$ be eigenfunctions of the Laplace--Beltrami operator on $\bS^3$, with eigenvalues $-m(m+2)$, $-n(n+2)$ respectively, $m,n\in\mathbb{Z}_{\geq 0}$. Assume $m\geq n$. 
Then it was proved in \cite{BGT052} that 
$$\|fg\|_{L^2(\bS^3)}\leq C(n+1)^{\frac12}\log^{\frac12}(n+2) \|f\|_{L^2(\bS^3)}\|g\|_{L^2(\bS^3)},$$
where $C$ is a positive universal constant. 
A significant limitation of the above estimate lies in the logarithmic factor, which is not expected to be sharp. This becomes a more significant issue for the important question of critical well-posedness for \eqref{eq: maineq} in the energy space, for which sharp ``scale-invariant'' bilinear eigenfunction estimates would be needed. However, since it was first introduced, the above bilinear estimate has not been refined in the literature. The delicacy of this estimate stems from its $L^4$ nature, which corresponds to the critical breakpoint in Sogge’s $L^p$ eigenfunction bounds on $\bS^3$. 
In fact, among all spheres, the three-sphere is the only case for which a sharp multilinear eigenfunction estimate has been absent.

As a key contribution of this paper, we fill this gap. In Theorem \ref{mainthm: 1}, we eliminate the log factor and prove the \textit{sharp} scale-invariant bilinear eigenfunction estimate on $\bS^3$, which also implies all the sharp multilinear eigenfunction estimates. In contrast to the microlocal analytic methods in \cite{BGT052}, our approach is distinctly more algebraic and analytically transparent. It is based on viewing the standard three-sphere as the compact Lie group $\mathrm{SU}(2)$ and exploiting the associated representation theory. As is well understood, products of eigenfunctions can be expressed as linear combinations of matrix entries of tensor products of irreducible representations. To estimate these quantities, we work within the framework of the Clebsch–Gordan coefficients, which dictates the decomposition of tensor products into irreducible representations. Exploiting structural properties of these coefficients, we are able to reduce the bilinear eigenfunction estimates to elementary bounds, illustrating how representation-theoretic tools can substantially clarify analytic questions in partial differential equations. Moreover, this approach extends naturally to spheres of arbitrary dimension and yields sharp bilinear eigenfunction estimates in a unified algebraic framework; see Remark~\ref{generalization}.

In order to treat the critical well-posedness of \eqref{eq: maineq} on $\R\times\bS^3$, however, obtaining the sharp bilinear eigenfunction estimate on $\bS^3$ is only a necessary step in our approach. To carry out the strategy essentially devised by Herr \cite{Her13} and later by Herr and Strunk \cite{HS15}, we also need a refined anisotropic Strichartz estimate on $\R_{x_1}\times\T_{x_2}$ of $L^\infty_{x_2}L^p_{t,x_1}$-type, tailored to certain spectrally localized data arising from an almost orthogonality argument. The appearance of the $\T$ component is due to the fact that shifting the spectrum of the Laplace–Beltrami operator on $\bS^3$ by $-1$ yields the spectrum of $\T$, up to the removal of the zero mode.
Similar to the discussion by Herr, Tataru, and Tzvetkov in \cite{HTT14}, there are two possible approaches to this problem. One is to get an $L^\infty_{x_2}L^p_{t,x_1}$-type Strichartz estimate on $\R_{x_1}\times\T_{x_2}$ for some $p<4$, which currently seems out of reach. The other is to refine reasoning at the $L^4$ level, as was done in \cite{HTT14}. We also take the second approach here, and succeed in establishing the estimate stated in Theorem~\ref{mainthm: 2} through a careful combination of sharp counting and measure estimates. A key technical component is a novel kernel decomposition method, which facilitates the application of distinct arithmetic--geometric mean inequalities to the quadrilinear form---an essential step in assembling the various precise measure estimates. Notably, these techniques are sufficiently robust to yield \textit{sharp} $L^4$-Strichartz estimates for the \textit{hyperbolic Schr\"odinger equation} on $\mathbb{R}\times\mathbb{T}$; see Remark~\ref{rmk 5.1}. Combining Theorem \ref{mainthm: 2} with Theorem \ref{mainthm: 1}, we prove a refined bilinear Strichartz estimate on $\R\times \bS^3$ as recorded in Theorem \ref{mainthm: 3}. As a consequence, we obtain the small-data global well-posedness for the Cauchy problem \eqref{eq: maineq}, in Theorem \ref{mainthm: 4}.

Our approach is inherently interdisciplinary, blending techniques from representation theory, Fourier analysis, number theory, and nonlinear PDEs. This fusion not only resolves the problem at hand, but also illustrates the profound influence of algebraic and geometric structures on dispersive dynamics. Let us compare the cubic NLS on $\R\times\bS^3$ with some other energy-critical models that have not been treated in the literature. The analysis of the quintic NLS on $\R\times\bS^2$ is in fact considerably simpler, since one can rely on the trilinear eigenfunction bound on $\bS^2$ together with an $L^\infty_{x_2}L^p_{t,x_1}$-type Strichartz estimate on $\R_{x_1}\times\T_{x_2}$ valid for $p<6$ (in particular, for $p=4$); see Remark \ref{rem: RmSn}. %On the other hand, small-data global well-posedness for 
For the cubic NLS on $\T \times \bS^3$, the question remains unresolved. In light of the sharp bilinear eigenfunction estimate on $\bS^3$, small-data global well-posedness would follow from a refined anisotropic Strichartz estimate on $\T\times\T$, 
%of mixed-norm type $L^\infty_{x_2} L^4_{t,x_1}$ on $\T_{x_1} \times \T_{x_2}$, 
analogous to Theorem \ref{mainthm: 2}. We leave this problem for future work. 
Finally, the cubic NLS on $\mathbb{R}^{m}\times \mathbb{T}^{2-m}\times \mathbb{S}^2$, $m=0,1,2$, and  on $\bS^2\times\bS^2$, appear to be the most difficult cases. As there is no scale-invariant bilinear eigenfunction estimate on $\mathbb{S}^2$, a substantially different strategy would be required; see also Remark~\ref{rem: R2S2}.

\subsection{Statement of main results}
We now present precisely the main results of this paper. 

\begin{theorem}[Sharp bilinear eigenfunction estimate]\label{mainthm: 1}
For $m,n\in\mathbb{Z}_{\geq 0}$, let $f,g$ be eigenfunctions of the Laplace--Beltrami operator $\Delta_{\bS^3}$ on $\bS^3$ such that 
    $$\Delta_{\bS^3} f=-m(m+2)f,\ \ \ \Delta_{\bS^3} g=-n(n+2)g. $$
    Assume that $m\geq n$.
    Then 
    $$\|fg\|_{L^2(\bS^3)}\leq C  (n+1)^{\frac12}\|f\|_{L^2(\bS^3)}\|g\|_{L^2(\bS^3)}.$$    
\end{theorem}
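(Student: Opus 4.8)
The plan is to identify $\bS^3$ with the Lie group $\mathrm{SU}(2)$ and use the Peter--Weyl theorem: the eigenspace of $\Delta_{\bS^3}$ with eigenvalue $-m(m+2)$ is spanned by the matrix coefficients of the unique $(m+1)$-dimensional irreducible representation $\pi_m$ of $\mathrm{SU}(2)$. Thus it suffices to prove the estimate when $f$ and $g$ are individual (normalized) matrix coefficients of $\pi_m$ and $\pi_n$ respectively; the general case follows by bilinearity and the triangle inequality, provided the bound is uniform over the choice of matrix coefficients. Concretely, after normalizing so that $\|\pi_m(x)_{ij}\|_{L^2(\mathrm{SU}(2))} = (m+1)^{-1/2}$ by Schur orthogonality, I would write $f(x) = (m+1)^{1/2}\pi_m(x)_{ab}$ and $g(x) = (n+1)^{1/2}\pi_n(x)_{cd}$, and compute $\|fg\|_{L^2}^2$ by expanding the product.

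The second step is the Clebsch--Gordan decomposition $\pi_m \otimes \pi_n \cong \bigoplus_{\ell = m-n}^{m+n} \pi_\ell$ (with the step of $2$ in the index, $\ell \equiv m+n \bmod 2$, and $m \ge n$), which expresses the product $\pi_m(x)_{ab}\,\pi_n(x)_{cd}$ as a linear combination of matrix coefficients $\pi_\ell(x)_{pq}$, with coefficients built out of pairs of Clebsch--Gordan coefficients. Using Schur orthogonality again, $\|fg\|_{L^2}^2 = (m+1)(n+1)\sum_{\ell} \frac{1}{\ell+1}\bigl|\text{(sum of products of two CG coefficients)}\bigr|^2$, and then the unitarity (orthonormality/completeness) of the Clebsch--Gordan transform collapses most of these sums. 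The key point is that after applying the orthogonality relations $\sum (\text{CG})(\text{CG}) = \delta$, the expression for $\|fg\|_{L^2}^2$ telescopes down to something like $(n+1)$ times a sum of at most $n+1$ terms each bounded by a constant — giving $\|fg\|_{L^2}^2 \lesssim (n+1)^2 \|f\|_{L^2}^2\|g\|_{L^2}^2$, i.e. exactly the claimed bound with no logarithm. The number $n+1$ of irreducible summands is precisely what produces the factor $(n+1)^{1/2}$; the crucial improvement over \cite{BGT052} is that the naive triangle-inequality estimate (which loses the log) is replaced by an exact orthogonality computation.

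The main obstacle, I expect, is bookkeeping: carrying the Clebsch--Gordan coefficients through the computation with the right normalization conventions and confirming that, after the orthogonality relations are invoked, no residual factor of $\log n$ sneaks back in through the dimension weights $1/(\ell+1)$. One has to check that the weights $1/(\ell+1)$ for $\ell$ ranging over $m-n, m-n+2, \dots, m+n$ — which would, if summed directly against $O(1)$ coefficients, give a harmonic-sum-type $\log(m/n)$ — are in fact paired against Clebsch--Gordan data that decays fast enough (or that the summation structure is such that each fixed output index $q$ receives contributions from only $O(1)$ values of $\ell$, or from a geometric-type sum). Equivalently, the heart of the matter is to show that the relevant partial sums of squared Clebsch--Gordan coefficients are $O(1)$ uniformly, rather than $O(\log n)$; this is where the explicit structure of the $\mathrm{SU}(2)$ Clebsch--Gordan coefficients, or a clever reorganization of the double sum via completeness relations, must be used. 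Once the right orthogonality relation is applied in the right order, the estimate should follow cleanly, and the multilinear consequences then come from iterating the bilinear bound together with Hölder and Sogge's $L^p$ eigenfunction bounds.
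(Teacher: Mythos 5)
Your general framework (identify $\bS^3$ with $\mathrm{SU}(2)$, expand in matrix coefficients, use the Clebsch--Gordan decomposition and Schur orthogonality) is the same as the paper's, but the proposal has two genuine gaps. First, the opening reduction is invalid: you cannot reduce to individual matrix coefficients "by bilinearity and the triangle inequality." A general eigenfunction $f$ in the $(-m(m+2))$-eigenspace is a linear combination of $(m+1)^2$ matrix coefficients, and passing from the $\ell^2$ norm of the coefficient array to the $\ell^1$ norm that the triangle inequality produces costs a factor $(m+1)(n+1)$, which destroys the claimed bound. The whole difficulty of the sharp estimate lies in exploiting cancellation among the cross terms of general linear combinations; the paper never makes this reduction. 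Instead it keeps the full coefficient arrays $a_{\alpha,\alpha'},b_{\beta,\beta'}$, writes
$\|fg\|_{L^2}^2=(n+1)\sum_{k,M,M'}\frac{m+1}{k+1}|S(k,M,M')|^2$ with
$S(k,M,M')=\sum_{\alpha+\beta=M,\,\alpha'+\beta'=M'}a_{\alpha,\alpha'}b_{\beta,\beta'}\overline{C^{k,M}_{m,\alpha;n,\beta}C^{k,M'}_{m,\alpha';n,\beta'}}$,
and then bounds $\sum_k|S(k,M,M')|^2$ by Bessel's inequality, using that the vectors $\{(C^{k,M}_{m,\alpha;n,\beta}C^{k,M'}_{m,\alpha';n,\beta'})_{\alpha+\beta=M,\alpha'+\beta'=M'}\}_k$ are orthonormal (a consequence of the second orthogonality relation for Clebsch--Gordan coefficients, applied separately in the primed and unprimed indices). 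This Bessel step is exactly the "clever reorganization via completeness relations" you hope for but do not carry out, and it intrinsically requires working with the full arrays, so it is incompatible with your first reduction.

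Second, your worry about the dimension weights $1/(\ell+1)$ producing a harmonic-sum logarithm is real, and you leave it unresolved. The paper disposes of it by a case split: when $n\le m<2n$ the estimate follows directly from Sogge's $L^4$ eigenfunction bound and H\"older (since $(m+1)^{1/4}(n+1)^{1/4}\lesssim (n+1)^{1/2}$ there), and when $m\ge 2n$ every $k$ in the Clebsch--Gordan range $\{m-n,\dots,m+n\}$ satisfies $k+1\sim m+1$, so the weights $\frac{m+1}{k+1}$ are uniformly $O(1)$ and no logarithm can arise; the remaining sum over $k$ is then controlled by the Bessel argument above, not by any pointwise decay of Clebsch--Gordan coefficients. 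Without this case split and the Bessel step, your outline does not yet yield the theorem.
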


\begin{remark}
This sharp bilinear eigenfunction estimate immediately yields the corresponding sharp trilinear and general multilinear eigenfunction estimates, improving upon (1.7) and (1.8) of \cite{BGT052}; see Corollary \ref{cor: Trilinear}. These multilinear estimates refine the corresponding linear eigenfunction bounds originally established by Sogge \cite{Sog88}.
\end{remark}

\begin{remark}
Although $\bS^3$ is expected to exhibit the largest possible growth of Laplace–Beltrami eigenfunctions among all three-dimensional compact manifolds, it remains open to establish the same bilinear eigenfunction estimates on general three-dimensional compact manifolds, in particular on general Zoll manifolds.
\end{remark}

Next we state our refined Strichartz estimate on $\R\times\T$.  
We will need a good Schwartz function to replace the characteristic function of the unit interval. Throughout this paper, let $\varphi(t)\in \mathcal{S}(\R)$ be such that: (1) $\widehat{\varphi}(\tau)\geq 0$ for all $\tau\in\R$; (2) the support of $\widehat{\varphi}$ lies in $[-1,1]$; (3)  $\varphi(t)\ge 0$ for $t\in \bR$, and $ \varphi(t)\ge 1$ for $t\in[0,1]$. The existence of the function $\varphi$ is straightforward; see Lemma 1.26 in \cite{Dem20}.

\begin{theorem}[Refined anisotropic Strichartz estimate]\label{mainthm: 2}
Let $1\le M \le N$, $\delta\in (0,\frac18)$. Let $a\in\bR^2$ with $|a|=1$, and $c\in \R$. Let $\xi^0\in \bR\times \bZ$. Define
$$\mathcal{R}=\{\xi=(\xi_1,\xi_2)\in \bR\times \bZ : |\xi-\xi^0|\le N, |a\cdot \xi-c|\le M  \}.$$
Assume that $\phi \in L^2(\bR\times \bT)$ and $\text{supp}(\widehat{\phi})\subset \mathcal{R}$.  Then the following holds:
\begin{align*}
   &  \l\|\varphi(t)\int_{\bR \times \bZ} e^{ix_1\cdot \xi_1-it|\xi|^2} \widehat{\phi}(\xi)\ {\rm d}\xi\r\|_{ L^4_{t,x_1}(\bR\times \bR)} \leq C \l(\frac{M}{N}\r)^{\delta} N^{\frac14}   \|\phi\|_{L^2(\bR\times \bT)},
\end{align*}
uniformly in $a\in\bR^2$ with $|a|=1$, $c\in \R$, $\xi^0\in \R\times\bZ$, and $1\le M\le N$.
\end{theorem}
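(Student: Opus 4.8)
The plan is to prove the estimate by a $TT^*$-type argument — the Córdoba identity $\|u\|_{L^4}^2=\|u^2\|_{L^2}$ — which converts the mixed-norm Strichartz bound into a purely combinatorial measure estimate, and then to establish that measure estimate by the kernel decomposition method announced in the introduction. First I would normalize: a Galilean transformation in the $x_1$-variable replaces $u$ by a unimodular factor times an $x_1$-translate of itself and so preserves $\|u\|_{L^4_{t,x_1}}$, which lets me take the $\R$-component of $\xi_0$ to be $0$; the remaining integer shift in the $\T$-direction only translates the ranges of the summation variables below and affects nothing, so I write $\mathcal R=\{\xi\in\R\times\Z:|\xi|\le N,\ |a\cdot\xi-c|\le M\}$. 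With $\psi:=\widehat{\varphi^2}=\widehat\varphi*\widehat\varphi$, which by the hypotheses on $\varphi$ is a fixed nonnegative Schwartz function concentrated on $[-2,2]$, a direct computation of the spacetime Fourier transform of $u^2$ gives, writing $\eta:=(\zeta-\xi_1,\eta_2)$ throughout,
\[
  \widehat{u^2}(\tau,\zeta)=\sum_{\xi_2,\eta_2\in\Z}\int_\R \psi\big(\tau-|\xi|^2-|\eta|^2\big)\,\widehat\phi(\xi_1,\xi_2)\,\widehat\phi(\zeta-\xi_1,\eta_2)\,d\xi_1 .
\]
Applying Cauchy--Schwarz in the nonnegative measure $\sum_{\xi_2,\eta_2}\int\psi(\tau-|\xi|^2-|\eta|^2)\,\mathbf{1}_{\mathcal R}(\xi)\mathbf{1}_{\mathcal R}(\eta)\,d\xi_1$, integrating in $\tau$, and using Fubini reduces everything to the uniform estimate
\[
  \sup_{\tau,\zeta}\,\mathcal M(\tau,\zeta)\lesssim_\delta\Big(\frac MN\Big)^{4\delta}N,\qquad
  \mathcal M(\tau,\zeta):=\sum_{\xi_2,\eta_2\in\Z}\int_\R \psi\big(\tau-|\xi|^2-|\eta|^2\big)\,\mathbf{1}_{\mathcal R}(\xi)\,\mathbf{1}_{\mathcal R}(\eta)\,d\xi_1 ,
\]
since then $\|u\|_{L^4_{t,x_1}}^2=\|\widehat{u^2}\|_{L^2_{\tau,\zeta}}\lesssim(\sup\mathcal M)^{1/2}\|\phi\|_{L^2(\R\times\T)}^2$.

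Next I would unpack $\mathcal M$. Carrying out the inner integral in $\xi_1$ and setting $p:=2\xi_1-\zeta$ and $R:=2\tau-\zeta^2$, the requirement that $\tau-|\xi|^2-|\eta|^2=\tfrac12(R-p^2-2\xi_2^2-2\eta_2^2)$ lie in the effective support of $\psi$ pins $|p|$ to within $O(1)$ of $\sqrt{|R-2\xi_2^2-2\eta_2^2|}$ and contributes a weight $\sim(1+|p|)^{-1}$, so that
\[
  \mathcal M(\tau,\zeta)\lesssim\sum_{(\xi_2,\eta_2)\in E}\frac{1}{1+\sqrt{\,|R-2\xi_2^2-2\eta_2^2|\,}}\,,
\]
where $E\subset\Z^2$ is the set of $(\xi_2,\eta_2)$ for which $p$ can be chosen with $\xi,\eta\in\mathcal R$; in particular $|\xi_2|,|\eta_2|\lesssim N$, $0\le R\lesssim N^2$, and the slab conditions on $\xi$ and on $\eta$ hold. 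Dropping the slab conditions, the base bound $\mathcal M\lesssim N$ is immediate: decompose dyadically in $|R-2\xi_2^2-2\eta_2^2|\sim 2^j$, bound the number of integer points in the resulting annulus (radius $\lesssim N$, width $\gtrsim 1$) by $O(2^j+N)$ via area-plus-perimeter counting, and sum $2^{-j/2}(2^j+N)$ over $0\le j\lesssim\log N$.

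To gain the factor $(M/N)^{4\delta}$ I would use the slab conditions, splitting on the angle between $a$ and the lattice direction $e_2$. In the variables $r:=\xi_2+\eta_2$, $q:=\xi_2-\eta_2$ (with $p$ as above and $p^2+q^2+r^2=R$), adding and subtracting the two constraints $a\cdot\xi,\,a\cdot\eta\in[c-M,c+M]$ turns them into $|a_2r-c_1|\le 2M$ and $|a_1p+a_2q|\le 2M$ for a suitable $c_1=c_1(\zeta,c)$. When $|a_2|$ is bounded below, the first confines $r$ to $\lesssim M$ integers, and summing the weight over $q$ for each fixed $r$ is $O(1)$ (the one-dimensional form of the previous step), so $\mathcal M\lesssim M\lesssim(M/N)^{4\delta}N$. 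When $|a_2|$ is small, so $|a_1|\gtrsim1$, the second condition pins $p$ — hence $\xi_1$ — to an interval whose length is governed by $M$, and the energy relation $p^2=R-2\xi_2^2-2\eta_2^2$ then \emph{transfers} this to a confinement of $\xi_2^2+\eta_2^2$, restricting $(\xi_2,\eta_2)$ to an annulus whose width is controlled by the size of the resonance $|p|$. Decomposing $\mathcal M$ dyadically in $|p|\sim 2^j$ — this is the kernel decomposition — each piece is a sum of the weight $\sim 2^{-j}$ over integer points in an annulus of radius $\lesssim N$ and small width, further cut down by the slab; estimating that count by area-plus-perimeter, and by the divisor bound $r_2(k)\ll_\varepsilon k^\varepsilon$ for the thinnest bands, and balancing against $2^{-j}$, yields the gain. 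The intermediate range of $|a_2|$, and the regime $M\gtrsim N^{1-}$ where the target is essentially $N$ and the base bound already suffices, are handled by combining these two mechanisms; summing the finitely many ranges finishes the proof.

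The main obstacle is precisely this last regime — slab nearly parallel to the Euclidean direction — where the slab cuts down neither the number of admissible $\xi_2$ nor of $\xi_2+\eta_2$, and one must exploit the curvature of the symbol $|\xi|^2$ to convert the confinement of $\xi_1$ into one of $\xi_2^2+\eta_2^2$ and then balance lattice-point counts near circles against the $(1+|p|)^{-1}$ weight, uniformly in $a$, in $c$, and in $1\le M\le N$; this is what forces the dyadic kernel decomposition, and it is also the reason the theorem only asks for the small gain $(M/N)^\delta$ with $\delta<\tfrac18$ — this slack makes the unavoidable $N^\varepsilon$ losses (from the divisor bound and from the error terms in the annulus counts) harmless. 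The very same scheme — a $TT^*$ reduction followed by a kernel decomposition, now for the indefinite form $\xi_1^2-\xi_2^2+\eta_1^2-\eta_2^2$ — is what gives the sharp $L^4$-Strichartz estimate for the hyperbolic Schr\"odinger equation on $\R\times\T$ recorded in Remark~\ref{rmk 5.1}.
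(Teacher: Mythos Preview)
Your normalization step contains a genuine error. The Galilean shift in $x_1$ removes the $\R$-component of $\xi_0$, but the integer component $k_0$ does \emph{not} merely ``translate summation ranges.'' Since no $x_2$ variable appears in the norm, there is no transformation to absorb the cross term: shifting $\xi_2\mapsto\xi_2+k_0$ turns the phase $|\xi|^2$ into $|\xi|^2+2k_0\xi_2$ (up to a harmless constant). Thus the correct reduction, which the paper carries out, leaves you with the dispersion relation $\xi_1^2+\xi_2^2+k\xi_2$ for a parameter $k\in\bZ$ that must be tracked uniformly. In your variables, the resonance relation becomes $p^2+(r+k)^2+q^2=R+k^2+O(1)$, and in Case~2 (small $|a_2|$) you are now counting lattice points on circles of radius $\sim|k|$, intersected with the box $|r|,|q|\lesssim N$. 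For $|k|\gg N$ the divisor bound $r_2(A)\ll_\varepsilon A^\varepsilon$ gives only $|k|^{2\varepsilon}$, which is not $\lesssim N^\varepsilon$; one needs the uniform-in-$k$ count of Lemma~\ref{lem:counting lem}(1). Your proposal never acknowledges this parameter, so as written the argument does not close.

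Beyond this gap, your route diverges from the paper's. You apply a single Cauchy--Schwarz to reduce to $\sup_{\tau,\zeta}\mathcal M$, then decompose dyadically in $|p|$; this forces the resulting lattice count to be of \emph{circle} type throughout. The paper instead expands the full quadrilinear form and splits the characteristic function as $\ca_\Gamma\le K_1+K_2$, applying the AM--GM pairing $|\widehat\phi(\xi^{(1)})\widehat\phi(\xi^{(3)})|^2+|\widehat\phi(\xi^{(2)})\widehat\phi(\xi^{(4)})|^2$ on $K_1$ but the \emph{cross} pairing $|\widehat\phi(\xi^{(1)})\widehat\phi(\xi^{(4)})|^2+|\widehat\phi(\xi^{(2)})\widehat\phi(\xi^{(3)})|^2$ on $K_2$. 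The second pairing converts the count into a \emph{hyperbola}-type constraint $|lx+mn+C|\lesssim1$ (Lemma~\ref{lem: main counting}), which is where the $k$-uniformity is won via Lemma~\ref{lem:counting lem}(2). This two-pairing trick is precisely what the paper calls the ``kernel decomposition method,'' not the dyadic-in-$|p|$ split you describe. Your approach can likely be repaired by invoking Lemma~\ref{lem:counting lem}(1) after restoring the $k$ term, but that step is missing and is not a triviality.
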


\begin{remark}\label{rem: LinftyL4}
 In particular, by choosing $M=N$ and $c=0$, we obtain the $L^\infty_{x_2}L^4_{t,x_1}$-type Strichartz estimate on $\R_{x_1}\times \bT_{x_2}$:
\begin{align}\label{eq: res}
   \sup_{\xi^0\in \R\times\bZ} \l\|\varphi(t)\int_{ 
    \substack{\xi\in\bR \times \bZ \\
    |\xi-\xi^0|\leq N}
    } e^{ix_1\cdot \xi_1-it|\xi|^2} \widehat{\phi}(\xi)\ {\rm d}\xi\r\|_{L^4_{t,x_1}( \R\times \bR)} \leq C N^{\frac14} \|\phi\|_{L^2}.
\end{align}   

By Bernstein's inequality on $\bT$, the case $\xi^0=0$ of the above estimate is also a consequence of the  $L^4$-Strichartz estimate on $\R\times \bT$ established in \cite{TT01}.
\end{remark}

Based on Theorem \ref{mainthm: 1} and Theorem \ref{mainthm: 2}, we have the following refined bilinear Strichartz estimate on $\R\times \bS^3$, a crucial ingredient for the well-posedness theory of \eqref{eq: maineq} in the energy space. %which is a crucial ingredient for establishing the well-posedness theory. 

\begin{theorem}[Refined bilinear Strichartz estimate]\label{mainthm: 3}

    For $1\leq N_2\leq N_1$ and $0<\delta<\frac{1}{8}$, we have 
    $$\|e^{it\Delta}P_{N_1}f\cdot e^{it\Delta}P_{N_2}g\|_{L^2([0,1]\times\R\times\bS^3)}\leq C  N_2\left(\frac{N_2}{N_1}+\frac{1}{N_2}\right)^\delta\|f\|_{L^2(\R\times\bS^3)}\|g\|_{L^2(\R\times\bS^3)}.$$
\end{theorem}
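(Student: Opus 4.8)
The plan is to combine the sharp bilinear eigenfunction estimate on $\bS^3$ (Theorem \ref{mainthm: 1}) with the refined $L^\infty_{x_2}L^4_{t,x_1}$-type Strichartz estimate (Theorem \ref{mainthm: 2}) by first reducing the bilinear Strichartz estimate on $\R\times\bS^3$ to a one-dimensional estimate on $\R_x\times\T$. The key observation is that, by spectral decomposition along $\bS^3$, a function $e^{it\Delta}P_{N_j}f$ on $\R\times\bS^3$ can be written using the eigenfunctions of $\Delta_{\bS^3}$; writing $f = \sum_m f_m$ with $\Delta_{\bS^3} f_m = -m(m+2)f_m$, and using $m(m+2) = (m+1)^2 - 1$, the time evolution contributes a phase $e^{it((m+1)^2-1+|\xi_1|^2)}$, so that the frequency $(\xi_1, m+1)$ lives on $\R\times\bZ_{\geq 1}$ and the "full" Laplacian behaves like $-\partial_{x_1}^2 - \partial_{x_2}^2$ on $\R_{x_1}\times\T_{x_2}$ (up to the harmless overall $e^{it}$ factor and the removal of the zero mode). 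So the first step is to set up this dictionary carefully and split both $f$ and $g$ according to the $\bS^3$-spectrum into dyadic pieces $f = \sum_{M_1} f_{M_1}$, $g = \sum_{M_2} g_{M_2}$ where $M_1 \lesssim N_1$, $M_2 \lesssim N_2$ are the $\bS^3$-frequency scales.

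The second step is to run an almost-orthogonality argument à la Herr--Tataru--Tzvetkov / Herr--Strunk. For fixed dyadic $M_1, M_2$ (with, say, $M_2 \leq M_1$), one wants to estimate $\|e^{it\Delta}P_{N_1}f_{M_1}\cdot e^{it\Delta}P_{N_2}g_{M_2}\|_{L^2}$. Expand the product pointwise on $\bS^3$: the product of an eigenfunction of frequency $\sim M_1$ and one of frequency $\sim M_2$ decomposes, via the bilinear eigenfunction estimate, into a sum controlled in $L^2(\bS^3)$ with the gain $(M_2+1)^{1/2}$ from Theorem \ref{mainthm: 1}. After using this to handle the $y$-integration, one is left with a quantity of the form $\|e^{it(\partial_{x_1}^2)}u \cdot e^{it\partial_{x_1}^2}v\|$-type objects on $\R\times\R$ where the $\T$-frequencies of $u,v$ are localized. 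The crucial point is that the product of two frequency-localized waveguide solutions, at frequency scales $N_1$ and $N_2$, has its space-time frequency support in a region of the type $\mathcal{R}$ appearing in Theorem \ref{mainthm: 2}: the difference of the two inputs' frequencies is constrained to a slab of thickness $\sim M_2 \vee (N_1^2/\langle\text{output}\rangle)$-type width, which is where the ratio $M/N$ in Theorem \ref{mainthm: 2} enters and produces the gain $(N_2/N_1 + 1/N_2)^\delta$.

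Concretely, for the third step I would slice the output frequency: decompose $e^{it\Delta}P_{N_1}f_{M_1}\cdot e^{it\Delta}P_{N_2}g_{M_2}$ according to the value of the sum of the $\R$-frequencies of the two factors into intervals of length $\sim N_2$ (there are $O(N_1/N_2)$ of them, and they are almost orthogonal in $L^2$ by a standard $\ell^2$-decoupling of disjoint frequency supports). On each such slice, Plancherel in the output frequency together with the dual formulation lets one apply the refined Strichartz estimate of Theorem \ref{mainthm: 2} to each factor: the lower-frequency factor $e^{it\Delta}P_{N_2}g_{M_2}$ is estimated in $L^4_{t,x_1}L^\infty_{x_2}$ by $C N_2^{1/4}\|g_{M_2}\|_{L^2}$ via \eqref{eq: res}, while the higher-frequency factor is estimated in $L^4_{t,x_1}L^2_{x_2}$; the point of restricting to a length-$N_2$ output slice is that it forces the $\R$-frequency of the high factor into a slab of width $\sim N_2$ inside a ball of radius $\sim N_1$, so Theorem \ref{mainthm: 2} with $M \sim N_2$, $N \sim N_1$ gives the factor $(N_2/N_1)^\delta N_1^{1/4}$. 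Multiplying, summing the $O(N_1/N_2)$ almost-orthogonal slices, and combining with the $(M_2+1)^{1/2}$ gain from the bilinear eigenfunction estimate yields, after some bookkeeping, the bound $N_2 (N_2/N_1)^\delta$ for the main term; the "$1/N_2$" alternative in the bracket comes from the regime where $M_2$ is small (the diagonal-type interaction, $M_2 = O(1)$ or the contribution of low $\bS^3$-frequencies where the $\T$ zero-mode removal matters), handled directly by the bilinear eigenfunction estimate without the Strichartz gain. Finally, one sums over the dyadic scales $M_1 \lesssim N_1$, $M_2 \lesssim N_2$ using the $\ell^2$ orthogonality of the $\bS^3$-Littlewood--Paley pieces, which costs only logarithmic factors that are absorbed into a slightly smaller $\delta$.

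The main obstacle, I expect, is the third step: correctly tracking the frequency-support geometry so that the slab-width parameter $M$ in Theorem \ref{mainthm: 2} really is $\sim N_2$ (and not larger) on each output slice, and ensuring that the almost-orthogonality across output slices is genuine (the supports in the output $\R$-frequency are essentially disjoint, but one must check the $\bS^3$-frequency bookkeeping does not destroy this). A secondary subtlety is that Theorem \ref{mainthm: 2} is stated for data on $\R\times\T$ with a sharp cutoff $\varphi(t)$ in time on $[0,1]$, so one must transfer from $e^{it\Delta_{\bS^3}}$-type evolution to the $\R\times\T$ picture via the spectral shift $m(m+2)\mapsto (m+1)^2$, and deal with the removed zero Fourier mode on $\T$ — but this only removes a single lower-dimensional piece, which is controlled by the $M_2 = O(1)$ analysis. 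With these points handled, the assembly is routine.
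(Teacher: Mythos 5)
Your overall architecture (shift the $\bS^3$-spectrum to the $\R\times\T$ picture, use Theorem \ref{mainthm: 1} to dispose of the $y$-integration with a gain of $N_2^{1/2}$, then reduce to $L^4_{t,x_1}$ estimates for the two factors via H\"older/Plancherel) matches the paper, but your third step has a genuine gap: the numerology does not close. Slicing the output $\R$-frequency into intervals of length $N_2$ only pins the $\xi_1$-component of the high-frequency factor to an interval of length $\sim N_2$; its $\T$-frequency still ranges over a set of size $\sim N_1$, so you are forced to invoke Theorem \ref{mainthm: 2} with $N\sim N_1$, $M\sim N_2$, which yields $(N_2/N_1)^\delta N_1^{1/4}$. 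The almost orthogonality over the $O(N_1/N_2)$ slices is only an $\ell^2$-summation in $\|P_{\mathrm{slice}}f\|_{L^2}$ and gains no power, so your final bound is $N_2^{1/2}\cdot N_2^{1/4}\cdot (N_2/N_1)^\delta N_1^{1/4}=N_2^{3/4}N_1^{1/4}(N_2/N_1)^\delta$, which exceeds the target $N_2(N_2/N_1+1/N_2)^\delta$ by essentially $(N_1/N_2)^{1/4-\delta}$; since $\delta<\tfrac18$, this cannot be absorbed. Your attribution of the $1/N_2$ term to low $\bS^3$-frequencies is also not how that term arises.

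The missing idea is a \emph{two-stage} almost-orthogonality, as in the paper. First, localize the high-frequency datum to cubes $C\subset\R\times\bZ$ of side $N_2$ in \emph{both} spectral variables; Lemma \ref{lem: fg spec supp} (which rests on the Clebsch--Gordan structure of products of $\bS^3$-eigenfunctions) shows the product is spectrally supported in $C+[-2N_2,2N_2]^2$, giving spatial almost orthogonality. With this alone and \eqref{eq: res} applied to both factors one only recovers the unrefined bound $N_2$; the $\delta$-gain requires a second, finer decomposition of each cube into slabs $\{\xi\in C:|a\cdot\xi-c|\le M\}$ with $a=\xi_0/|\xi_0|$ the radial direction and $M=\max\{N_2^2/N_1,1\}$. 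Distinct slabs produce products whose \emph{time} frequencies are pinned near $-c^2$ with spread $O(cM)\sim O(N_1M)$, while the low factor and the cutoff contribute only $O(N_2^2)\lesssim N_1M$, so the slab contributions are almost orthogonal in $L^2_t$ — this orthogonality cannot be seen from spatial frequency supports, which is why your output-slicing cannot replace it. Theorem \ref{mainthm: 2} is then applied to the high factor with $N=N_2$ and $M=\max\{N_2^2/N_1,1\}$, yielding exactly $(M/N_2)^\delta N_2^{1/4}\lesssim(N_2/N_1+1/N_2)^\delta N_2^{1/4}$ (this is also where the $1/N_2$ term really comes from), and the low factor is handled by \eqref{eq: res}. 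Incidentally, your extra dyadic decomposition in the $\bS^3$-frequencies $M_1,M_2$ is unnecessary: the bilinear eigenfunction estimate is applied directly to the spectral components $f_{\omega_1,k_1},g_{\omega_2,k_2}$ with the bound $\min\{k_1,k_2\}^{1/2}\lesssim N_2^{1/2}$, so no dyadic summation (and no loss to be absorbed into $\delta$) occurs.
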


\begin{remark}
    In particular, by choosing $N_1=N_2=N\geq 1$, we get the $L^4$-Strichartz estimate on $\R\times\bS^3$
    \begin{align}\label{eq: L4 Strichartz R times T}
        \|e^{it\Delta}P_{N}f\|_{L^4([0,1]\times\R\times\bS^3)}\leq C  N^{\frac12}\|f\|_{L^2(\R\times\bS^3)}.
    \end{align}
\end{remark}

\begin{remark}
The above refined bilinear Strichartz estimates also hold on $\R^m\times \T^{4-m}$, $m=0,1,2,3$, as established by Herr, Tataru, and Tzvetkov \cite{HTT14}, Ionescu and Pausader \cite{IP12CMP}, and Bourgain \cite{Bou13}. 
\end{remark}

Finally, we present our well-posedness result for \eqref{eq: maineq}. Let $B_\varepsilon(\phi):=\{u_0\in H^1(\R\times\bS^3): \|u_0-\phi\|_{H^1}<\varepsilon\}$. 
\begin{theorem}[Well-posedness]\label{mainthm: 4}
Let $s\geq 1$. For every $\phi\in H^1(\R\times\bS^3)$, there exists $\varepsilon>0$ and $T=T(\phi)>0$, such that for all initial data $u_0\in B_\varepsilon(\phi)$, the Cauchy problem \eqref{eq: maineq} has a unique solution
\[
u\in C([0,T);H^s(\R\times\bS^3))\cap X^s([0,T)). 
\]
This solution obeys conservation laws \eqref{eq: energy} and \eqref{eq: mass}, and the flow map 
$$B_\varepsilon(\phi)\cap H^s(\R\times\bS^3)\ni u_0\mapsto u\in C([0,T);H^s(\R\times\bS^3))\cap X^s([0,T))$$
is Lipschitz continuous. 
Moreover, there exists a constant $\eta_0>0$ such that if $\|u_0\|_{H^s(\R\times\bS^3)}\le\eta_0$ then the solution extends globally in time.
\end{theorem}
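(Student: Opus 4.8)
The plan is to follow the now-standard strategy for energy-critical well-posedness in critical function spaces, as developed by Herr--Tataru--Tzvetkov \cite{HTT11,HTT14} and adapted to the sphere setting by Herr \cite{Her13} and Herr--Strunk \cite{HS15}. First I would set up the atomic function spaces $U^2_{\Delta}$ and $V^2_{\Delta}$ adapted to the Schr\"odinger propagator $e^{it\Delta}$ on $\R\times\bS^3$, and define the solution space $X^s([0,T))$ by
$$\|u\|_{X^s([0,T))}^2 = \sum_{N\ \mathrm{dyadic}} N^{2s}\, \|P_N u\|_{U^2_\Delta([0,T);L^2)}^2,$$
together with a companion space $N^s$ built from $V^2_\Delta$-atoms to host the nonlinearity, where $P_N$ denotes the Littlewood--Paley projection attached to the joint frequency given by $\langle\partial_{x}\rangle$ on $\R$ and $\sqrt{1-\Delta_{\bS^3}}$ on $\bS^3$. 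These spaces obey the usual transference principle, so that the $L^4_{t,x,y}$ estimate and, more importantly, the refined bilinear Strichartz estimate of Theorem \ref{mainthm: 3} transfer from genuine free evolutions to arbitrary elements of $U^2_\Delta$, and the Duhamel operator $f\mapsto \int_0^t e^{i(t-s)\Delta}f(s)\,{\rm d}s$ maps $N^s([0,T))$ boundedly into $X^s([0,T))$.

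The heart of the matter is the trilinear estimate
$$\left\| \int_0^t e^{i(t-s)\Delta}\Big(\,\widetilde u_1\,\widetilde u_2\,\widetilde u_3\,\Big)(s)\,{\rm d}s \right\|_{X^s([0,T))} \leq C \prod_{j=1}^3 \|u_j\|_{X^s([0,T))},$$
where $\widetilde u_j\in\{u_j,\bar u_j\}$, which by duality reduces to bounding the spacetime integral $\big|\int_{[0,T)\times\R\times\bS^3} u_0\,\widetilde u_1\,\widetilde u_2\,\widetilde u_3\big|$. After a Littlewood--Paley decomposition $u_j=\sum_{N_j}P_{N_j}u_j$ and an ordering of the four frequencies $N_0,N_1,N_2,N_3$, I would group the two highest-frequency factors and the two lowest-frequency factors, apply Cauchy--Schwarz, and estimate each resulting pair by the refined bilinear Strichartz estimate of Theorem \ref{mainthm: 3}. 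This produces a gain of the form $(N_{\min}/N_{\max}+1/N_{\min})^{\delta}$ which, once distributed against the $N^{s}$-weights, yields a geometric series in the dyadic parameters summing to a constant. The \emph{genuine} power gain $\delta>0$ — exactly the payoff of removing the logarithm in Theorem \ref{mainthm: 1} and of the $(M/N)^\delta$ improvement in Theorem \ref{mainthm: 2} — is what allows the high--low frequency summation to close at the critical regularity $s=1$; with the logarithmically lossy estimates of \cite{BGT052} this summation diverges. The range $s>1$ then follows from the same bilinear input together with standard fractional Leibniz bookkeeping.

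With the trilinear estimate established, I would run a contraction mapping argument for $\Phi(u)=e^{it\Delta}u_0 \mp i\int_0^t e^{i(t-s)\Delta}(|u|^2u)(s)\,{\rm d}s$ on a ball in $X^s([0,T))$. For small data $\|u_0\|_{H^s}\leq\eta_0$ one has $\|e^{it\Delta}u_0\|_{X^s(\R)}\lesssim\|u_0\|_{H^s}$, so $\Phi$ is a contraction on a small ball in $X^s(\R)$, yielding the global solution. For general $\phi$, one uses that $\|e^{it\Delta}\phi\|_{X^s([0,T))}\to 0$ as $T\to 0$ — first for $\phi$ with spectrum supported on finitely many modes, then by density — so that for $\varepsilon$ and $T$ small depending on $\phi$ the map $\Phi$ contracts on a ball in $X^s([0,T))$ uniformly over $u_0\in B_\varepsilon(\phi)$; the fixed point is the desired local solution, uniqueness in the ball upgrades to uniqueness in $C([0,T);H^s)\cap X^s([0,T))$ by a standard continuity argument, and Lipschitz dependence on $u_0$ is read directly from the multilinear estimates. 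Persistence of regularity ($C([0,T);H^s)$ with $s\geq 1$) follows from the embedding $X^s\hookrightarrow C_tH^s$, and the conservation laws \eqref{eq: energy} and \eqref{eq: mass} are obtained by approximating $u_0$ by smooth data, for which the conservation laws are classical, and passing to the limit.

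The step I expect to be the main obstacle is the trilinear estimate itself at the critical exponent: one must arrange the Littlewood--Paley pieces so that \emph{every} frequency interaction (high-high-low, high-low-low, and the comparable-frequency regime) is dominated by a bilinear Strichartz bound carrying an honest power gain in the relevant frequency ratio, keep careful track of the output frequency $N_0$ against the three input frequencies, and then verify the convergence of the resulting multiple dyadic sum against the $N^1$-weights. Additional care is needed because the $\R$-direction provides no compactness, and because the measure-theoretic and kernel-decomposition inputs underlying Theorem \ref{mainthm: 3} must be invoked in the correct regime $1\le N_2\le N_1$. Everything else — the function-space algebra, the transference principle, the small-time vanishing of the free evolution in $X^s$, and the limiting argument for the conservation laws — is routine once these estimates are in place.
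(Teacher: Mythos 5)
Your overall strategy is the same as the paper's: set up the $U^2/V^2$-based critical spaces $X^s,Y^s$, transfer the refined bilinear Strichartz estimate of Theorem \ref{mainthm: 3} to these spaces, prove a trilinear estimate for the Duhamel term, and close a contraction mapping argument exactly as in \cite{HTT11,HTT14,Her13,HS15} (the paper itself carries out these steps by citing those references, with only trivial modifications). Your treatment of the small-data global statement, the small-time smallness of $e^{it\Delta}\phi$ in $X^s$ for data near a fixed $\phi$, persistence of regularity, Lipschitz dependence, and the limiting argument for the conservation laws is all consistent with the paper.

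However, there is a concrete flaw in the one step you yourself call the heart of the matter. In the dualized trilinear estimate you propose to ``group the two highest-frequency factors and the two lowest-frequency factors'' before applying Cauchy--Schwarz and Theorem \ref{mainthm: 3}. This pairing does not close the dyadic sum at $s=1$. If the ordered frequencies are $M_1\ge M_2\ge M_3\ge M_4$ (with $M_1\sim M_2$ in the nontrivial interactions), the high-high pair only yields
\[
\|P_{M_1}u\,P_{M_2}v\|_{L^2}\lesssim M_2\Big(\tfrac{M_2}{M_1}+\tfrac1{M_2}\Big)^\delta\|\cdot\|\,\|\cdot\|\sim M_2\,\|\cdot\|\,\|\cdot\|,
\]
i.e.\ the ratio gain degenerates ($M_2/M_1\sim1$) and the prefactor is the \emph{large} frequency $M_2\sim M_{\max}$ rather than a low one. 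After inserting the $H^1$-weights one is left with sums of the type $\sum_{M_3\lesssim M_1} M_1/M_3$, which diverge; the $\delta$-gain you rightly identify as the payoff of Theorems \ref{mainthm: 1} and \ref{mainthm: 2} never enters for the dominant pair. The correct arrangement, which is what Proposition 3.5 of \cite{HTT11} and Proposition 2.12 of \cite{HTT14} (invoked by the paper) actually do, is to pair each of the two comparable \emph{high} frequencies with one of the two \emph{low} frequencies, e.g.\ $\|P_{M_1}u\,P_{M_3}w\|_{L^2}\,\|P_{M_2}v\,P_{M_4}z\|_{L^2}$: each pair then contributes both the favorable factor $M_3$ (resp.\ $M_4$) and a genuine gain $(M_3/M_1+1/M_3)^\delta$ (resp.\ $(M_4/M_2+1/M_4)^\delta$), which is exactly what makes the summation against the critical weights converge. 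With this correction your argument matches the paper's; without it the trilinear estimate, and hence the whole contraction scheme, does not go through.
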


The function spaces $X^s([0,T))$ used to construct the solution in the above theorem, 
namely those in Definition \ref{def: Xs Ys}, are similar to the ones used in \cite{Her13} and \cite{HS15}, which are based on the dyadic Littlewood--Paley projections, and the $U^p,V^p$ spaces introduced in \cite{KT05} (see also \cite{MR1827277}).

\begin{remark}[On the large-data global well-posedness]
A natural and important question is whether the small-data global well-posedness result can be extended to the large-data setting for both the defocusing and focusing nonlinearities. 
%We believe this is a challenging yet feasible problem. 
%A standard approach would be to adapt the celebrated concentration-compactness method of Kenig and Merle \cite{MR2257393} to this new geometric setting $\mathbb{R} \times \mathbb{S}^3$. 
As observed in previous large-data works on manifolds such as \cite{IP12duke,IP12CMP,PTW14}, at least for the defocusing case, the large-data global well-posedness and scattering results on Euclidean spaces \cite{Iteam08,RV07} can be treated as a black box, and it would then suffice to establish several key properties of the linear Schr\"odinger flow such as Lemmas 4.3 and 7.3 of \cite{IP12duke}. However, the current absence of any scale-invariant $L^p$-Strichartz estimates on $\mathbb{R}\times\mathbb{S}^3$ for $p<4$ creates substantial obstacles to proving those properties; see Section \ref{subsec: Strichartz on RS3} for further discussions.

\end{remark}
\begin{table}[htbp]
\centering
\caption{Global well-posedness for 4D energy-critical NLS models in the energy space}
\label{tab:4D}
\begin{tabular}{|c|c|c|c|c|}
\hline
 \textbf{Geometry} & \textbf{Small data} &  \textbf{Large data}\\

\hline
$\mathbb{R}^4$   & Cazenave--Weissler \cite{CW89} & Ryckman--Visan \cite{RV07},  
Dodson \cite{MR3940908} \\
\hline
$\mathbb{H}^4$   & Anker--Pierfelice \cite{AP09} &  {\it Open}\\
\hline
$\mathbb{T}^4$   & Herr--Tataru--Tzvetkov \cite{HTT14}, 
Bourgain \cite{Bou13} & Yue \cite{Yue21} \\
\hline
$\mathbb{R}\times \mathbb{T}^3$  & \multicolumn{2}{|c|}{Ionescu--Pausader \cite{IP12CMP} } 
  \\
\hline
$\mathbb{R}^2\times \mathbb{T}^2$ &  Herr--Tataru--Tzvetkov \cite{HTT14} & Zhao \cite{Zhao19}\\
\hline
$\mathbb{R}^3\times \mathbb{T}$  & Herr--Tataru--Tzvetkov \cite{HTT14} & Zhao \cite{Zhao21}\\
\hline
$\mathbb{R}\times \mathbb{S}^3$   & {\it Current paper} & {\it Open 
} \\
\hline
\end{tabular}
\end{table}

\begin{table}[htbp]
\centering
\caption{Global well-posedness for 3D energy-critical NLS models in the energy space}
\label{tab:3D}
\begin{tabular}{|c|c|c|c|c|}
\hline
 \textbf{Geometry} & \textbf{Small data} & \textbf{Large data} \\
\hline
$\mathbb{R}^3$   & Cazenave--Weissler \cite{CW89} & Colliander--Keel--Staffilani--Takaoka--Tao \cite{Iteam08}
\\
\hline
$\mathbb{H}^3$   & Anker--Pierfelice \cite{AP09} & Ionescu--Pausader--Staffilani \cite{IPS12} \\
\hline
$\mathbb{T}^3$   & Herr--Tataru--Tzvetkov \cite{HTT11} & Ionescu--Pausader \cite{IP12duke} \\
\hline
$\mathbb{R}\times \mathbb{T}^2$ &\multicolumn{2}{|c|}{Hani--Pausader \cite{HP14} }  \\
\hline
$\mathbb{R}^2\times \mathbb{T}$ & \multicolumn{2}{|c|}{ Zhao \cite{Zhao21} }   \\
\hline
$\mathbb{S}^3$  & Herr \cite{Her13} & Pausader--Tzvetkov--Wang \cite{PTW14}\\
\hline
$\mathbb{T}\times \mathbb{S}^2$ & Herr--Strunk \cite{HS15} & {\it Open} \\
\hline
\end{tabular}
\end{table}

\subsection{Organization of the paper}
In Section \ref{pre}, we review the Plancherel and Littlewood--Paley theory for $\R\times\bS^3$, 
and collect some basic estimates such as the Bernstein and Sobolev inequalities. In Section \ref{3}, we review the representation theory of $\mathrm{SU}(2)$ that is essential for our analysis, especially the framework of Clebsch--Gordan coefficients. In Section \ref{sec: bilinear}, we prove the sharp bilinear eigenfunction estimate on $\mathbb{S}^3$ (Theorem \ref{mainthm: 1}). 
In Section \ref{sec:  Strichartz}, we prove the refined anisotropic Strichartz estimate on $\mathbb{R} \times \mathbb{T}$ (Theorem \ref{mainthm: 2}).   
In Section \ref{bilinear and WP}, we combine the previous results to establish the refined bilinear Strichartz estimate on $\mathbb{R} \times \mathbb{S}^3$ (Theorem \ref{mainthm: 3}) and prove well-posedness for the energy-critical NLS (Theorem \ref{mainthm: 4}). Finally, in Section \ref{7}, we %summarize our results, 
discuss related open problems such as the optimal $L^\infty_{x_2}L^p_{t,x_1}$-type Strichartz estimate on $\mathbb{R}_{x_1} \times \mathbb{T}_{x_2}$ and the Strichartz estimate on $\R\times\bS^3$.

\subsection{Notation}
 
We write $A \lesssim B$ if $A \leq C B$ for some absolute constant $C>0$. (Without ambiguity, we may use $C$ to denote a positive absolute constant, whose value may change from line to line.) We write $A \sim B$ if both $A \lesssim B$ and $B \lesssim A$ hold.

We write $A\ll B$ if there exists a sufficiently small constant $c > 0$ such that $A\leq cB$.
We use the usual $L^{p}$ spaces and Sobolev spaces $H^{s}$. For $1\leq p,q\leq\infty$, we use $L_x^pL_y^q$ to denote mixed-norm Lebesgue spaces such that 
$$\|f\|_{L^p_xL^q_y}:=\left(\int \left(\int |f(x,y)|^q\ {\rm d}y\right)^{\frac{p}{q}}\ {\rm d}x\right)^{\frac1p}.$$
Our notation for the Fourier transform on $\R$ is 
$$\widehat{f}(\xi)=\frac1{2\pi}\int_\R f(x) e^{-i\xi x }\ {\rm d}x, \ \xi\in\R.$$
Our notation for the Fourier transform on $\R_{x_1}\times \bT_{x_2}$ is 
$$\widehat{f}(\xi_1,\xi_2)=\frac1{4\pi^2}\int_0^{2\pi}\int_{\R}f(x_1,x_2) e^{-i(\xi_1 x_1+ \xi_2x_2)}\ {\rm d}x_1\ {\rm d}x_2, \ (\xi_1,\xi_2)\in\R\times\bZ.$$
\subsection*{Acknowledgments} 
We %highly appreciate
sincerely 
thank Professors Nicolas Burq, Sebastian Herr, and Nikolay Tzvetkov for their many helpful suggestions and insightful discussions. We warmly thank Professor Herbert Koch and Professor Daniel Tataru for their careful reading of the paper, which helped improve its clarity. 
Y. Deng was supported by China Postdoctoral Science Foundation (Grant No. 2025M774191) and the NSF grant of China (No. 12501117). 
Y. Zhang gratefully acknowledges the hospitality of both the Beijing Institute of Technology and the University of Cincinnati, where this work was conceived and carried out. 
Z. Zhao was supported by the National Key R\&D Program of China (2025YFA1018500), the NSF grant of China (No. 12426205, 12271032), Beijing Natural Science Foundation (No. 1262019) and the Beijing Institute of Technology Research Fund Program for Young Scholars.
\section{Preliminaries}\label{pre}
\subsection{Spectral theory, and Littlewood--Paley projectors}
Let $\Delta_\R$ and $\Delta_{\bS^3}$ denote the standard Laplace--Beltrami operators on $\R$ and $\bS^3$ respectively, and take $\Delta=\Delta_{\R}+\Delta_{\bS^3}$ as the Laplace--Beltrami operator on $\R\times\bS^3$.

The joint spectral decomposition of $\Delta_{\R}$ and $\Delta_{\bS^3}$  takes the following form. For $f\in L^2(\R\times\bS^3)$, 
\begin{align}\label{eq: spec decomp 1}
  f(x,y):=\int_{\R}\sum_{k=0}^\infty f_{\omega,k}(y) e^{i\omega x}\ {\rm d}\omega,\ x\in\R, \ y\in \bS^3,
\end{align}
where each $f_{\omega,k}$ is an eigenfunction of $\Delta_{\bS^3}$ such that 
$$\Delta_{\bS^3}f_{\omega,k}=-k(k+2)f_{\omega,k}.$$
Here ${\rm d}\omega$ denotes the standard Lebesgue measure on $\R$. 
We may also rewrite \eqref{eq: spec decomp 1} as
\begin{align}\label{eq: spec decomp 2}
  f(x,y):=\int_{\R\times\bZ_{\geq 0}} f_{\omega,k}(y) e^{i\omega x}\ {\rm d}\omega\ {\rm d}k,\ x\in\R, \ y\in \bS^3,
\end{align}
where ${\rm d}k$ denotes the counting measure on $\bZ$. Note that 
$$\Delta (f_{\omega,k}(y) e^{i\omega x})= [-\omega^2-(k+1)^2+1](f_{\omega,k}(y) e^{i\omega x}),$$
which, together with the above decomposition of $L^2(\R\times \bS^3)$, gives an explicit functional calculus for $\Delta$. We also mention that in the subsequent treatment of Strichartz estimates on $\R\times\bS^3$, we will shift the standard Laplace--Beltrami operator $\Delta$ to $\Delta-\mathrm{Id}$, which has the cleaner-looking spectrum $-\omega^2-(k+1)^2$, $(\omega,k)\in\R\times\bZ_{\geq 0}$. In light of this and for convenience, we fix the following terminology. 
\begin{definition}
    Given the above spectral decomposition \eqref{eq: spec decomp 1} or \eqref{eq: spec decomp 2} of $f\in L^2(\R\times \bS^3)$, we name
    $$(\xi_1,\xi_2):=(\omega,k+1)\in \R\times\bZ_{\geq 1}$$ as the spectral parameters. 
    For any bounded subset $A$ of $\R\times\bZ$, we say $f$ is spectrally supported in $A$ if $f_{\omega,k}=0$ for all $(\omega,k+1)\notin A$. We also define the spectral projector  
    $$P_{A}f(x,y):=\int_{\substack{
(\omega,k)\in \R\times\bZ_{\geq 0}\\
 (\omega,k+1)\in A  }}f_{\omega,k}(y) e^{i\omega x}\ {\rm d}\omega\ {\rm d}k. $$
\end{definition}

The Plancherel identity is 
$$\|f\|_{L^2(\R\times\bS^3)}^2=2\pi\int_{\R}\sum_{k=0}^\infty \|f_{\omega,k}\|_{L^2(\bS^3)}^2\ {\rm d}\omega.$$
We define the Sobolev norm 
$$\|f\|_{H^s(\R\times\bS^3)}^2:=\|(1-\Delta)^{\frac s2}f\|^2_{L^2(\R\times\bS^3)}
=\int_{\R}\sum_{k=0}^\infty(1+k(k+2)+\omega^2)^{\frac s2} \|f_{\omega,k}\|_{L^2(\bS^3)}^2.$$

Next, we define the standard Littlewood--Paley projectors associated with $\Delta$. Let us fix a nonnegative bump function $\beta\in C^\infty_0((\frac12,2))$ such that 
$$\sum_{m=-\infty}^\infty \beta(2^{-m}s)=1, \ s>0.$$
Then we set 
$\beta_0(s)=1-\sum_{m=1}^\infty \beta(2^{-m}s)\in C_0^\infty(\R_{>0})$ and $\beta_m(s)=\beta(2^{-m}s)$ for $m\geq1$. For $N=2^m$ with $m\geq 0$, 
define
$$P_{N}f:=\beta_m(\sqrt{-\Delta})f=\int_{\R}\sum_{k=0}^\infty\beta_m(\sqrt{k(k+2)+\omega^2}) f_{\omega,k}(y) e^{i\omega x}\ {\rm d}\omega,$$
and  
$$P_{\leq N}f:=\sum_{n=0}^m P_{2^n}f.$$

We end this subsection with the following important lemma on the spectral support of a product of two functions. 
\begin{lemma}\label{lem: fg spec supp}
Let $A$ be a bounded subset of $\R\times\bZ$. 
%Let $\xi^0$ denote the center of $\mathcal{S}$, and assume that $|\xi^0|\sim N_1\geq 1$. 
%Assume that $N_2\ll N_1$. 
Let $N_2=2^m\geq 1$. Let $f,g\in L^2(\R\times \bS^3)$. 
Then $P_Af\cdot P_{N_2}g$ is spectrally supported in $A+[-2N_2,2N_2]^2$.     
\end{lemma}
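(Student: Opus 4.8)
The plan is to reduce the claim to a statement about the behavior of the Fourier/spectral supports under multiplication. First I would record the precise meaning of ``spectrally supported'': writing the spectral decompositions
\[
P_A f(x,y)=\int_{\substack{(\omega,k)\in\R\times\bZ_{\geq0}\\(\omega,k+1)\in A}} f_{\omega,k}(y)\,e^{i\omega x}\,{\rm d}\omega\,{\rm d}k,\qquad
P_{N_2}g(x,y)=\int_{\R}\sum_{k'=0}^\infty \beta_m(\sqrt{k'(k'+2)+\omega'^2})\,g_{\omega',k'}(y)\,e^{i\omega' x}\,{\rm d}\omega',
\]
the product $P_A f\cdot P_{N_2}g$ is an integral over pairs $((\omega,k),(\omega',k'))$ of terms of the form $f_{\omega,k}(y)g_{\omega',k'}(y)\,e^{i(\omega+\omega')x}$. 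So the $\R$-component of the spectral parameter of the product is simply $\omega+\omega'$; since $(\omega,k+1)\in A$ and $(\omega',k'+1)$ lies in the support of $\beta_m(\sqrt{\cdot})$, i.e.\ $\sqrt{k'(k'+2)+\omega'^2}=\sqrt{(k'+1)^2-1+\omega'^2}\le 2N_2$, we get $|\omega'|\le 2N_2$ and hence the first coordinate of the product's spectral parameter lies in $(\text{first coordinate of }A)+[-2N_2,2N_2]$.

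The remaining point is the spherical factor: I need to understand the eigenvalues of $\Delta_{\bS^3}$ appearing in $f_{\omega,k}\,g_{\omega',k'}$. This is exactly the multilinear eigenfunction setup: $f_{\omega,k}$ has $\Delta_{\bS^3}$-eigenvalue $-k(k+2)=-(k+1)^2+1$ and $g_{\omega',k'}$ has eigenvalue $-k'(k'+2)=-(k'+1)^2+1$. The key structural fact — which follows from the representation theory of $\mathrm{SU}(2)$ reviewed in Section \ref{3}, namely that the tensor product of irreducibles of highest weights $k$ and $k'$ decomposes (Clebsch--Gordan) into irreducibles of highest weights $|k-k'|,|k-k'|+2,\dots,k+k'$ — is that the product of two such eigenfunctions is a finite sum of eigenfunctions of $\Delta_{\bS^3}$ with eigenvalues $-\ell(\ell+2)$ for $\ell\in\{|k-k'|,|k-k'|+2,\dots,k+k'\}$. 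In spectral-parameter language, writing $\kappa=k+1$, $\kappa'=k'+1$, the allowed values of $\ell+1$ are $\{|\kappa-\kappa'|+1,\dots,\kappa+\kappa'-1\}$, all of which lie in the interval $[\kappa-(\kappa'-1),\,\kappa+(\kappa'-1)]\subset [\kappa-2N_2,\kappa+2N_2]$ because the Littlewood--Paley cutoff forces $\kappa'-1=k'\le k'(k'+2)\le$ wait — more carefully, $\kappa'=k'+1\le\sqrt{(k'+1)^2}\le\sqrt{k'(k'+2)+\omega'^2}+1\le 2N_2+1$, hence $\kappa'-1\le 2N_2$. Combining, if $\kappa$ is the second coordinate of a spectral parameter of $P_A f$ (so $\kappa\in$ the $\bZ$-slice of $A$) then the second coordinate of any spectral parameter of the product lies in $\kappa+[-2N_2,2N_2]$.

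Putting the two coordinates together: every spectral parameter $(\xi_1,\xi_2)$ of $P_A f\cdot P_{N_2}g$ has the form $(\omega+\omega',\ell+1)$ with $(\omega,\kappa)\in A$, $|\omega'|\le 2N_2$, and $|\ell+1-\kappa|\le 2N_2$; that is, $(\xi_1,\xi_2)\in A+[-2N_2,2N_2]^2$, which is exactly the asserted spectral support. I would present this as: (i) expand both factors in their joint spectral decompositions; (ii) use $|\omega'|\le 2N_2$ and $\kappa'-1\le 2N_2$ from the Littlewood--Paley cutoff $\operatorname{supp}\beta_m\subset(2^{m-1},2^{m+1})$; (iii) invoke the Clebsch--Gordan decomposition to control the $\bS^3$-eigenvalues of the products $f_{\omega,k}g_{\omega',k'}$; (iv) assemble. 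The only genuine content is step (iii), the Clebsch--Gordan range, and the minor bookkeeping of translating eigenvalues $-\ell(\ell+2)$ into spectral parameters $\ell+1$; everything else is unwinding definitions. I do not anticipate a real obstacle — the lemma is a soft structural fact, stated here precisely so it can be quoted in the bilinear Strichartz arguments of Section \ref{bilinear and WP}.
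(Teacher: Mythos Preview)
Your proposal is correct and follows essentially the same approach as the paper: expand both factors spectrally, bound the $\R$-component via $|\omega'|\le 2N_2$ from the Littlewood--Paley cutoff, and control the $\bS^3$-component by invoking the Clebsch--Gordan range (which the paper packages as Lemma~\ref{lem: fg G supp}). The only cosmetic difference is that the paper records the constraints as a displayed system and then reads off the conclusion, whereas you do the bookkeeping inline.
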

\begin{proof}
    Write 
    $$P_Af=\int_{\substack{(\omega_1,k_1)\in \R\times\bZ_{\geq 0}\\(\omega_1,k_1+1)\in A}}f_{\omega_1,k_1}(y)e^{i\omega_1 x}\ {\rm d}\omega_1\ {\rm d}k_1,$$
    $$P_{N_2}g=\int_{(\omega_2,k_2)\in \R\times\bZ_{\geq 0}}\beta_m \l(\sqrt{(k_2+1)^2+\omega_2^2-1}\r)g_{\omega_2,k_2}(y)e^{i\omega_2 x}\ {\rm d}\omega_2\ {\rm d}k_2.$$
    Then 
    \begin{align*}
&P_Af\cdot P_{N_2}g\\
=& \int\!\!\!\int_{\substack{(\omega_i,k_i)\in \R\times\bZ_{\geq 0}, i=1,2\\(\omega_1,k_1+1)\in A}} 
\beta_m \l(\sqrt{(k_2+1)^2+\omega_2^2-1}\r)f_{\omega_1,k_1}(y)g_{\omega_2,k_2}(y)e^{i(\omega_1+\omega_2) x} \ {\rm d}\omega_1\ {\rm d}k_1\ {\rm d}\omega_2\ {\rm d}k_2.
    \end{align*}
    By Lemma \ref{lem: fg G supp} below, we may write 
    $$f_{\omega_1,k_1}(y)g_{\omega_2,k_2}(y)=\sum_k h_{\omega_1,k_1,\omega_2,k_2;k}(y),$$
    where $h_{\omega_1,k_1,\omega_2,k_2;k}$ is an eigenfunction of $\Delta_{\bS^3}$ with eigenvalue $-k(k+2)$, and $k$ ranges over $|k_1-k_2|,|k_1-k_2|+2,\ldots,k_1+k_2$. The above two identities imply that $P_Af\cdot P_{N_2}g$ is spectrally supported in the region of $(\omega,k+1)$ defined by 
    $$
    \left\{
    \begin{array}{l}
       \omega  =\omega_1+\omega_2,  \\
        (\omega_1,k_1+1)\in A, \\
        (k_2+1)^2+\omega_2^2-1\leq (2N_2)^2, \\
        k\in\{|k_1-k_2|,|k_1-k_2|+2,\ldots,k_1+k_2\}.
    \end{array}
    \right.
    $$
From the above conditions, it follows that $(\omega,k+1)\in A+[-2N_2,2N_2]^2$, which completes the proof.     
\end{proof}

\subsection{The Bernstein and Sobolev inequalities}
We briefly review the standard Bernstein and Sobolev inequalities on $\R\times\bS^3$ that are needed later. 
\begin{lemma}\label{lem: Bernstein}
For $1\leq q\leq p\leq\infty$, we have 
    $$\|P_{\leq N}f\|_{L^p(\R\times \bS^3)}\lesssim N^{4(\frac1q-\frac1p)}\|f\|_{L^q(\R\times \bS^3)}.$$
\end{lemma}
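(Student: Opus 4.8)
The plan is to realize $P_{\leq N}$ as a single spectral multiplier and to reduce the whole inequality to a pointwise bound on its integral kernel. First, telescoping the definition of the $\beta_n$ gives, for $N=2^m\geq 1$, the identity $P_{\leq N}=\sum_{n=0}^{m}\beta_n(\sqrt{-\Delta})=\beta_0(\sqrt{-\Delta}/N)$, where $\beta_0\in C^\infty_c([0,2))$ with $\beta_0\equiv 1$ on $[0,1]$. Write $K_N(z,z')$ for the (real, symmetric) integral kernel of this operator on $(\R\times\bS^3)^2$. The key claim is the bound
\[
|K_N(z,z')|\lesssim_A N^{4}\,\bigl(1+N\,d(z,z')\bigr)^{-A}\qquad\text{for every }A>0,
\]
where $d$ is the Riemannian distance on $\R\times\bS^3$. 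Granting this, and using that $\R\times\bS^3$ is a $4$-dimensional manifold of bounded geometry, one gets for each $r\in[1,\infty]$ that $\sup_{z'}\|K_N(\cdot,z')\|_{L^r}\lesssim N^{4}(N^{-4})^{1/r}=N^{4/r'}$ (taking $A>4r$ so the integral converges), and likewise with $z$ and $z'$ exchanged. The generalized Young (Schur) inequality for integral operators then yields $\|P_{\leq N}f\|_{L^p}\lesssim N^{4/r'}\|f\|_{L^q}$ whenever $1+\tfrac1p=\tfrac1r+\tfrac1q$; as $q$ runs over $[1,p]$ the exponent $\tfrac1{r'}=\tfrac1q-\tfrac1p$ sweeps out $[0,1]$, which is exactly the asserted range and produces the factor $N^{4(1/q-1/p)}$. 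In particular the case $q=p$ (i.e.\ $r=1$) records the uniform $L^p$-boundedness of the Littlewood--Paley projectors, so no separate square-function argument is needed.

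It remains to prove the kernel bound. Using the joint spectral decomposition of Section~\ref{pre} and the classical zonal formula $\Pi_k(y,y')=\frac{k+1}{2\pi^2}\cdot\frac{\sin((k+1)\theta)}{\sin\theta}$ for the spectral projector onto the $\Delta_{\bS^3}$-eigenvalue $-k(k+2)$, where $\theta=d_{\bS^3}(y,y')\in[0,\pi]$, one writes, with $\ell=k+1$,
\[
K_N(x,y;x',y')=c\int_{\R}\sum_{\ell\geq 1}\beta_0\!\Bigl(\tfrac1N\sqrt{\ell^{2}+\omega^{2}-1}\Bigr)\,\ell\,\frac{\sin(\ell\theta)}{\sin\theta}\,e^{i\omega(x-x')}\,{\rm d}\omega .
\]
The shift by $-1$ and the finitely many low frequencies $|\xi|\lesssim 1$, $\xi=(\omega,\ell)$, contribute only a smooth frequency-localized error handled trivially, so one may treat the symbol as $\widetilde\beta_0(|\xi|/N)$ for another unit-scale bump. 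Writing $\ell\sin(\ell\theta)=-\partial_\theta\cos(\ell\theta)$ and extending the $\ell$-sum to $\Z$ by evenness, $K_N$ becomes, up to the operator $\tfrac1{2\sin\theta}\partial_\theta$, the planar Fourier integral/series $\int_{\R}\sum_{\ell\in\Z}\widetilde\beta_0(|\xi|/N)\,e^{i\omega(x-x')+i\ell\theta}\,{\rm d}\omega$, which by Poisson summation in $\ell$ equals $\sum_{n\in\Z}G_N(x-x',\theta+2\pi n)$, where $G_N$ is the inverse Fourier transform on $\R^2$ of $\widetilde\beta_0(|\cdot|/N)$ and hence satisfies $|G_N(w)|\lesssim_A N^{2}(1+N|w|)^{-A}$ by non-stationary phase. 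For $\theta$ away from $\{0,\pi\}$ the factor $\tfrac1{\sin\theta}$ is harmless, the $n=0$ term dominates, and $\partial_\theta$ costs one power of $N$ on the frequency side, producing the claimed $N^{4}(1+Nd)^{-A}$. Near $\theta=0$, where $\frac{\sin(\ell\theta)}{\sin\theta}=U_{\ell-1}(\cos\theta)$ is smooth of size $O(\ell)$, the singularity of $\tfrac1{\sin\theta}$ is exactly cancelled by the vanishing of $\partial_\theta$ of the inner sum at $\theta=0$; near the antipode $\theta=\pi$ the alternating sign $(-1)^{\ell-1}$ merely shifts the effective $\ell$-frequency, and one reduces to the previous case via $\theta'=\pi-\theta$. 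Alternatively, since $\R\times\bS^3$ is a complete manifold with smooth metric, one may bypass this local analysis entirely by writing $\beta_0(\sqrt{-\Delta}/N)=c\int_\R N\widehat{\beta_0}(Nt)\cos(t\sqrt{-\Delta})\,{\rm d}t$ and invoking finite speed of propagation for the wave equation together with the $L^2\to L^2$ bound --- the standard, geometry-free route to such kernel estimates.

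The main obstacle is precisely this kernel bound, and within it the behavior at the two singular loci $\theta=0$ and $\theta=\pi$ of the $\bS^3$ Dirichlet kernel: away from them the estimate is a routine planar non-stationary-phase computation, and everything downstream --- summing the Poisson translates, checking uniformity in $N$, and the interpolation/Schur step --- is automatic. If one prefers to avoid the local analysis, the finite-propagation-speed argument reduces the proof to well-known facts about wave equations on manifolds of bounded geometry; in either case the Bernstein inequality follows.
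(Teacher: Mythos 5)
Your proposal is correct in substance, but it takes a genuinely different route from the paper. The paper's proof is three lines: since the joint spectral localization forces the individual spectra of $\Delta_\R$ and $\Delta_{\bS^3}$ in $P_{\leq N}f$ to both be $\lesssim N$, one applies the one-dimensional Bernstein inequality on $\R$ (gaining $N^{\frac1q-\frac1p}$), the compact-manifold Bernstein inequality of Burq--G\'erard--Tzvetkov (Corollary 2.2 of \cite{BGT04}) on $\bS^3$ (gaining $N^{3(\frac1q-\frac1p)}$), and Minkowski's inequality (legitimate precisely because $q\leq p$) to exchange the mixed norms $L^p_xL^q_y$ and $L^q_yL^p_x$. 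You instead realize $P_{\leq N}=\beta_0(\sqrt{-\Delta}/N)$ via the (correct) telescoping identity, prove the global kernel bound $|K_N(z,z')|\lesssim_A N^4(1+Nd(z,z'))^{-A}$ on the product manifold, and conclude by Schur/generalized Young. This is self-contained and yields a stronger intermediate fact (pointwise kernel decay, hence all $L^q\to L^p$ bounds at once), but it is much heavier: the zonal-kernel-plus-Poisson-summation computation, including the cancellation analysis at $\theta=0$ and $\theta=\pi$, essentially re-derives the kind of spectral-multiplier kernel estimate that underlies the cited compact-manifold Bernstein inequality, whereas the paper simply quotes it and exploits the product structure. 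Two minor caveats on your write-up: the "geometry-free" shortcut via $\cos(t\sqrt{-\Delta})$ and finite propagation speed does not give a pointwise kernel bound from the $L^2\to L^2$ estimate alone --- one must additionally compose with a frequency-localized smoothing operator (or use on-diagonal bounds) to upgrade to $L^\infty$, which is standard but not automatic as stated; and the integrability condition for the Schur step should read $Ar>4$ rather than $A>4r$. Neither affects the validity of the overall argument.
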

\begin{proof}
We observe that the individual spectra of $\Delta_\R$ and $\Delta_{\bS^3}$ in $P_{\leq N}f$ are both bounded by $\lesssim N$. Then we may apply the individual Bernstein's inequalities on $\R$ and $\bS^3$ (for the latter we refer to Corollary 2.2 of \cite{BGT04} which works on any compact manifold), and the Minkowski's inequality, to obtain  
    \begin{align*}
        \|P_{\leq N}f\|_{L^p(\R\times \bS^3)}
        &\lesssim N^{3(\frac1q-\frac1p)} \|P_{\leq N}f\|_{L^p(\R, L^q( \bS^3))}\\
        &\lesssim  N^{3(\frac1q-\frac1p)} \|P_{\leq N}f\|_{ L^q( \bS^3,L^p(\R))}\\
        &\lesssim N^{4(\frac1q-\frac1p)} \|f\|_{L^q(\R\times \bS^3)}. 
    \end{align*}
\end{proof}

\begin{lemma}\label{lem: Sob}
We have the embedding 
$H^1(\R\times\bS^3)\hookrightarrow L^4(\R\times \bS^3)$. 
\end{lemma}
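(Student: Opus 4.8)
The plan is to prove the embedding in the quantitative form $\|u\|_{L^4(\R\times\bS^3)}\lesssim\|u\|_{H^1(\R\times\bS^3)}$ by \emph{distributing the one available derivative unevenly between the two factors} — three quarters of it on $\bS^3$ and one quarter on $\R$ — rather than running a Littlewood--Paley/Bernstein argument. (A direct dyadic argument via Lemma~\ref{lem: Bernstein} is the obvious temptation, but it loses a logarithm: Bernstein gives $\|P_N u\|_{L^4}\lesssim N\|P_N u\|_{L^2}$, and $\sum_N N\|P_N u\|_{L^2}$ is Cauchy--Schwarz-summable only against an $H^{1+\varepsilon}$ norm, so that route yields $H^{1+\varepsilon}\hookrightarrow L^4$ and misses the critical exponent $s=1$, $\dim(\R\times\bS^3)=4$.) I would first record two classical one-factor Sobolev embeddings: on the compact $3$-manifold $\bS^3$ one has $H^{3/4}(\bS^3)\hookrightarrow L^4(\bS^3)$ (the critical Sobolev embedding there, valid because the target exponent is finite; it follows, e.g., from a partition of unity and the Hardy--Littlewood--Sobolev inequality on $\R^3$), and on $\R$ one has $H^{1/4}(\R)\hookrightarrow L^4(\R)$. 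With fractional powers defined by spectral calculus these read $\|v\|_{L^4_y}\lesssim\|(1-\Delta_{\bS^3})^{3/8}v\|_{L^2_y}$ and $\|w\|_{L^4_x}\lesssim\|(1-\partial_x^2)^{1/8}w\|_{L^2_x}$.

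The core of the argument is to chain these two embeddings through a single interchange of the order of integration. Applying the $\bS^3$-embedding for a.e.\ fixed $x$ gives $\|u\|_{L^4_x L^4_y}\lesssim\|(1-\Delta_{\bS^3})^{3/8}u\|_{L^4_x L^2_y}$; since $4\ge 2$, Minkowski's integral inequality yields $\|(1-\Delta_{\bS^3})^{3/8}u\|_{L^4_x L^2_y}\le\|(1-\Delta_{\bS^3})^{3/8}u\|_{L^2_y L^4_x}$; and then applying the $\R$-embedding for a.e.\ fixed $y$ gives $\|(1-\Delta_{\bS^3})^{3/8}u\|_{L^2_y L^4_x}\lesssim\|(1-\partial_x^2)^{1/8}(1-\Delta_{\bS^3})^{3/8}u\|_{L^2_y L^2_x}=\|(1-\partial_x^2)^{1/8}(1-\Delta_{\bS^3})^{3/8}u\|_{L^2(\R\times\bS^3)}$. (That the intermediate functions lie in the relevant spaces for a.e.\ slice follows from $(1-\Delta_{\bS^3})^{1/2}u\in L^2(\R\times\bS^3)$, which is part of the $H^1$ bound; I would note this in passing.)

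It then remains to compare the two Fourier multipliers on the joint spectrum. In the spectral decomposition \eqref{eq: spec decomp 1}, the operator $(1-\partial_x^2)^{1/8}(1-\Delta_{\bS^3})^{3/8}$ multiplies $f_{\omega,k}(y)e^{i\omega x}$ by $(1+\omega^2)^{1/8}(1+k(k+2))^{3/8}$, while $(1-\Delta)^{1/2}$ multiplies it by $(1+\omega^2+k(k+2))^{1/2}$. The weighted arithmetic--geometric mean inequality $A^{1/4}B^{3/4}\le\tfrac14 A+\tfrac34 B$ applied with $A=1+\omega^2$ and $B=1+k(k+2)$ gives $(1+\omega^2)^{1/4}(1+k(k+2))^{3/4}\le 1+\omega^2+k(k+2)$, hence $(1+\omega^2)^{1/8}(1+k(k+2))^{3/8}\le(1+\omega^2+k(k+2))^{1/2}$. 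By Plancherel, $\|(1-\partial_x^2)^{1/8}(1-\Delta_{\bS^3})^{3/8}u\|_{L^2}\le\|(1-\Delta)^{1/2}u\|_{L^2}=\|u\|_{H^1}$, which chained with the previous paragraph closes the proof.

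The only non-routine input is the critical one-factor embedding $H^{3/4}(\bS^3)\hookrightarrow L^4(\bS^3)$ (together with its $\R$-analogue), which I would simply cite as standard; once that and the harmless Minkowski swap are in hand, every remaining step is exact with no loss. In that sense the "hard part" is purely bookkeeping: one must resist doing everything in $L^4_x L^4_y$ throughout (where no interchange is possible and the critical loss reappears) and instead pass through the mixed norm $L^2_y L^4_x$ at exactly the right moment. As an alternative one could instead invoke the critical Sobolev embedding for complete manifolds of bounded geometry, but the product argument above is self-contained modulo two textbook facts and is the version I would write down.
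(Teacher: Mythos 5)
Your proof is correct, but it takes a genuinely different route from the paper. The paper simply covers $\bS^3$ by finitely many coordinate patches $U_i\cong\R^3$ with a subordinate partition of unity, so that $\R\times U_i\cong\R^4$ and the claim reduces in one line to the standard critical embedding $H^1(\R^4)\hookrightarrow L^4(\R^4)$. You instead split the derivative anisotropically between the factors, chaining $H^{3/4}(\bS^3)\hookrightarrow L^4(\bS^3)$ and $H^{1/4}(\R)\hookrightarrow L^4(\R)$ through a Minkowski interchange ($L^4_xL^2_y\le L^2_yL^4_x$ is the right direction since $4\ge 2$) and then comparing $(1+\omega^2)^{1/8}(1+k(k+2))^{3/8}$ with $(1+\omega^2+k(k+2))^{1/2}$ via weighted AM--GM on the joint spectrum; each of these steps checks out, including the exponents ($3/4=3/2-3/4$ on the $3$-manifold, $1/4=1/2-1/4$ on $\R$) and the a.e.-slice justification you note in passing. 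The trade-off: the paper's argument is shorter and needs only the $\R^4$ embedding as a black box (note that the one-factor embedding $H^{3/4}(\bS^3)\hookrightarrow L^4(\bS^3)$ you cite is itself usually proved by the very same partition-of-unity device, one dimension down), while yours is chart-free, stays entirely within the joint spectral calculus used throughout the paper, and yields as a by-product the stronger anisotropic bound $\|u\|_{L^4}\lesssim\|(1-\partial_x^2)^{1/8}(1-\Delta_{\bS^3})^{3/8}u\|_{L^2}$, which could be useful for mixed-norm refinements. Your motivational aside about a logarithmic loss in the naive Bernstein route is harmless but not really a fair comparison, since a square-function or bilinear dyadic summation also recovers the critical case; it does not affect the validity of your argument.
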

\begin{proof}
This follows from the standard partition-of-unity argument that gives the same Sobolev estimates on compact manifolds as on Euclidean spaces. Namely, we cover $\bS^3$ by finitely many Euclidean patches $U_i\cong \R^3$, associated to which are a partition of unity $\sum_i\rho_i=1$. 
For $f\in L^4(\R\times \bS^3)$, we may estimate 
$$\|f\|_{L^4(\R\times\bS^3)}\leq\sum_i\|\rho_i f\|_{L^4(\R\times U_i)}.$$
As $\R\times U_i\cong \R^4$, we may apply standard Sobolev estimates on $\R^4$ to obtain  $\|\rho_i f\|_{L^4(\R\times U_i)}\lesssim \|\rho_if\|_{H^1(\R\times U_i)}\lesssim \|f\|_{H^1(\R\times\bS^3)}$, which finishes the proof. 
\end{proof}

\section{Analysis on the group $\mathrm{SU}(2)$}\label{3}
In this section, we collect useful information on analysis of the group $\mathrm{SU}(2)$. We follow introductory textbooks on Lie groups such as \cite{Far08} and \cite{Hal15}. Let $G$ denote the compact Lie group 
$$\text{SU}(2):=\left\{
\begin{pmatrix}
    a & b\\
    -\overline{b} & \overline{a}
\end{pmatrix}: a,b\in\mathbb{C}, |a|^2+|b|^2=1
\right\}.$$
The diffeomorphism between $\mathrm{SU}(2)$ and the standard three-sphere $\mathbb{S}^3$ is immediate. Moreover, the standard probability measure on $\mathbb{S}^3$ coincides with the normalized Haar measure $\mu$ on $\mathrm{SU}(2)$, using which one defines the Lebesgue spaces such as $L^2(G)$. 

\subsection{Irreducible representations and their tensor products}
The equivalence classes of irreducible representations of $\mathrm{SU}(2)$ are in one-to-one correspondence with the set of nonnegative integers. For each nonnegative integer $m$, let $\pi_m$ be the corresponding irreducible representation of $\mathrm{SU}(2)$ acting on the vector space $V_m$. We have  
$$\dim(V_m)=m+1.$$ 
Let $\langle\ ,\ \rangle$ denote an inner product (unique up to scalars) on $ V_m$ that is $\pi_m$-invariant, i.e.,  
$$\langle \pi_m(\mathrm{g}) p,\pi_m(\mathrm{g})q\rangle=\langle p,q\rangle,\ \mathrm{g}\in G,\ p,q\in  V_m.$$

Consider tensor products of representations. For $m,n\in\mathbb{Z}_{\geq 0}$, the tensor product $\pi_m\otimes \pi_{n}$ of the representations $\pi_m$ and $\pi_{n}$ is defined using 
$$(\pi_m\otimes \pi_{n})(\mathrm{g})(v_m\otimes v_{n})=(\pi_m(\mathrm{g})v_m)\otimes (\pi_n(\mathrm{g})v_{n}),\ \mathrm{g}\in G, \ v_m\in  V_m, \ v_n\in V_n.$$
The following fundamental theorem describes decomposition of the tensor product $\pi_m \otimes \pi_n$ into irreducible representations.

\begin{theorem}[Clebsch--Gordan decomposition] \label{thm: CG decomp}
    For each $m,n\in\mathbb{Z}_{\geq 0}$ with $m\geq n$, there exists a unitary isomorphism of $\mathrm{SU}(2)$-representations
    \begin{align*}
    \pi_m\otimes\pi_{n}\cong \bigoplus_{k\in\{m-n,m-n+2,\ldots,m+n\}}\pi_k.
\end{align*}
\end{theorem}
A proof can be found in Appendix C of \cite{Hal15}. Now we reformulate this theorem in terms of bases as follows. 

\begin{theorem}[Clebsch--Gordan coefficients]\label{thm:CG}
Let $m,n\in\mathbb{Z}_{\geq 0}$ with $m\geq n$. Pick any orthonormal basis 
$$\{v_{m,\alpha}\in  V_m:\alpha=-m,-m+2,\ldots,m\}$$ 
of $ V_m$, and 
any orthonormal basis 
$$\{v_{n,\beta}\in  V_n:\beta=-n,-n+2,\ldots,n\}$$  
of $ V_n$. Then there exists an orthonormal basis 
$$\{u_{k,\gamma}\in  V_m\otimes V_n: k=m+n,m+n-2,\ldots, m-n; \gamma=-k,-k+2,\ldots,k\}$$
of $V_m\otimes V_n$, 
such that the following hold. 

\noindent (1) For each $k=m-n,m-n+2,\ldots,m+n$, the restriction of the tensor product representation to the linear span of $\{u_{k,\gamma}\in  V_m\otimes V_n:\gamma=-k,-k+2,\ldots,k\}$ is isomorphic to the irreducible representation $\pi_k$. 

\noindent (2) %The matrix entries $C_{m,\alpha;n,\beta}^{k,\gamma}$ of the transition matrix between these two bases, aka the 
Define the Clebsch--Gordan coefficients $C_{m,\alpha;n,\beta}^{k,\gamma}$ by\footnote{In our convention, the weight parameters of the Clebsch–Gordan coefficients are twice those used in \cite{Vil68,VMK88}. For additional properties of Clebsch–Gordan coefficients, see Chapter III of \cite{Vil68} and Chapter 8 of \cite{VMK88}.
} 
$$u_{k,\gamma}=\sum_{\alpha,\beta}C_{m,\alpha;n,\beta}^{k,\gamma}v_{m,\alpha}\otimes v_{n,\beta}.$$
Then they satisfy the orthogonality relations 
    \begin{equation}\label{eq: CG Ort 1}
\sum_{k,\gamma}C^{k,\gamma}_{m,\alpha;n,\beta}\overline{C^{k,\gamma}_{m,\alpha';n,\beta'}}=\delta_{\alpha,\alpha'}\delta_{\beta,\beta'},
    \end{equation}
    \begin{equation}\label{eq: CG Ort 2}
\sum_{\alpha,\beta}C^{k,\gamma}_{m,\alpha;n,\beta}\overline{C^{k',\gamma'}_{m,\alpha;n,\beta}}=\delta_{k,k'}\delta_{\gamma,\gamma'}.
    \end{equation}
\end{theorem}
\begin{proof}
In light of Theorem \ref{thm: CG decomp}, it suffices to explain part (2). This follows from the fact that $\{v_{m,\alpha}\otimes v_{n,\beta}: \alpha=-m,-m+2,\ldots,m;\beta=-n,-n+2,\ldots,n\}$ is an orthonormal basis of $V_m\otimes V_n$, and so the transition matrix between this basis and the other basis $\{u_{k,\gamma}\}_{k,\gamma}$ is unitary, which implies 
the orthogonality relations \eqref{eq: CG Ort 1} and \eqref{eq: CG Ort 2} for the Clebsch--Gordan coefficients. 
\end{proof}

As a consequence of \eqref{eq: CG Ort 1}, we also have  
\begin{align}\label{eq:base change}
    v_{m,\alpha}\otimes v_{n,\beta}=\sum_{k,\gamma} \overline{C^{k,\gamma}_{m,\alpha;n,\beta}}u_{k,\gamma}.
\end{align}

\subsection{Schur orthogonality relations}
The Schur orthogonality relations compute the inner products between  matrix entries of irreducible representations of $G$. 

\begin{lemma}[Theorem 6.3.3 and 6.3.4 of \cite{Far08}]\label{lem:Schur}

    \noindent (1) For $m\in\mathbb{Z}_{\geq 0}$ and $u,u',v,v'\in  V_m$, 
    $$\int_G \langle\pi_m(\mathrm{g})u,v\rangle\overline{ \langle\pi_m(\mathrm{g})u',v'\rangle} \ {\rm d}\mu(\mathrm{g})=\frac{1}{m+1} \langle u,u'\rangle \overline{\langle v,v'\rangle}.$$
    (2) For distinct $m,m'\in \mathbb{Z}_{\geq 0}$, $u,v\in  V_m$, $u',v'\in  V_{m'}$,
    $$\int_G \langle\pi_m(\mathrm{g})u,v\rangle\overline{ \langle\pi_{m'}(\mathrm{g})u',v'\rangle} \ {\rm d}\mu(\mathrm{g})=0.$$
\end{lemma}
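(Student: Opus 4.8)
The plan is the classical averaging argument combined with Schur's lemma, which is exactly how Theorem 6.3.3 and 6.3.4 of \cite{Far08} are obtained. As a preliminary I would record that, since $\langle\ ,\ \rangle$ is $G$-invariant, each $\pi_m$ is unitary, so $\pi_m(\mathrm{g})^{-1}=\pi_m(\mathrm{g}^{-1})$ is also the adjoint of $\pi_m(\mathrm{g})$ with respect to $\langle\ ,\ \rangle$; this lets one move the representation operators across the pairing freely, at the cost of passing to the inverse.

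For part (1), fix $u,u',v,v'\in\mathcal{P}_m$ and introduce the rank-one operator $B\colon\mathcal{P}_m\to\mathcal{P}_m$ defined by $Bw=\langle w,u'\rangle u$, whose trace is $\mathrm{tr}(B)=\langle u,u'\rangle$. Form its Haar average
$$A:=\int_G \pi_m(\mathrm{g})\,B\,\pi_m(\mathrm{g})^{-1}\ {\rm d}\mu(\mathrm{g}).$$
Left-invariance of $\mu$ gives $\pi_m(\mathrm{h})A=A\,\pi_m(\mathrm{h})$ for every $\mathrm{h}\in G$ (substitute $\mathrm{g}\mapsto\mathrm{h}\mathrm{g}$), so $A$ is an intertwiner of the irreducible representation $(\pi_m,\mathcal{P}_m)$ with itself. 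By Schur's lemma, $A=\lambda\,\mathrm{Id}$ for some scalar $\lambda\in\mathbb{C}$; taking traces and using $\mathrm{tr}(\pi_m(\mathrm{g})B\pi_m(\mathrm{g})^{-1})=\mathrm{tr}(B)$ together with $\mu(G)=1$ and $\dim\mathcal{P}_m=m+1$ yields $\lambda=\langle u,u'\rangle/(m+1)$. On the other hand, expanding $\langle A v',v\rangle$, moving the operators $\pi_m(\mathrm{g})^{\pm1}$ by unitarity, and substituting $Bw=\langle w,u'\rangle u$, one computes
$$\langle A v',v\rangle=\int_G \langle\pi_m(\mathrm{g})u,v\rangle\,\overline{\langle\pi_m(\mathrm{g})u',v'\rangle}\ {\rm d}\mu(\mathrm{g}).$$
Equating this with $\lambda\,\langle v',v\rangle=\tfrac{1}{m+1}\langle u,u'\rangle\,\overline{\langle v,v'\rangle}$ gives the claimed identity.

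For part (2), the argument is identical, with $B$ now any linear map $\mathcal{P}_{m'}\to\mathcal{P}_m$, say $Bw=\langle w,u'\rangle u$, and $A:=\int_G \pi_m(\mathrm{g})\,B\,\pi_{m'}(\mathrm{g})^{-1}\ {\rm d}\mu(\mathrm{g})$. The same invariance computation shows $\pi_m(\mathrm{h})A=A\,\pi_{m'}(\mathrm{h})$, so $A$ intertwines $\pi_{m'}$ and $\pi_m$, which are inequivalent because $m\neq m'$ forces $\dim\mathcal{P}_m\neq\dim\mathcal{P}_{m'}$. Schur's lemma then forces $A=0$, and reading off $\langle A v',v\rangle$ as above gives the vanishing of the integral.

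I do not expect a genuine obstacle here: the content is entirely classical. The only thing requiring care is the bookkeeping with the sesquilinear convention---tracking which slot of $\langle\ ,\ \rangle$ carries the complex conjugate, so that the right-hand side emerges as $\tfrac{1}{m+1}\langle u,u'\rangle\,\overline{\langle v,v'\rangle}$ and not, say, $\tfrac{1}{m+1}\overline{\langle u,u'\rangle}\langle v,v'\rangle$---and remembering to invoke $G$-invariance of both the inner product and of the Haar measure at the right steps. An alternative would be to quote the orthogonality of matrix coefficients supplied by the Peter--Weyl theorem, but since that theorem is itself usually derived from precisely these relations, the self-contained averaging argument above is preferable.
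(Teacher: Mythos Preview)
Your argument is correct and is precisely the classical averaging-plus-Schur proof. The paper itself does not prove this lemma at all: it simply records the statement and cites Theorem~6.3.3 and~6.3.4 of \cite{Far08}, so there is nothing in the paper to compare against beyond noting that your proof is the standard one that the cited reference gives.
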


\subsection{Eigenfunctions and their products}
The Peter--Weyl theorem provides an orthogonal decomposition of $L^2(G)$:
$$L^2(G)=\widehat{\bigoplus_{m\in\mathbb{Z}_{\geq 0}}} E_m,$$
where $E_m$ is the linear space spanned by matrix entries of the form $\langle\pi_m(\mathrm{g}) u,v\rangle$ with $u,v\in  V_m$. At the same time, this provides the spectral decomposition of the Laplace--Beltrami operator $\Delta_G$, and $E_m$ is exactly the space of eigenfunctions of $\Delta_G$ with eigenvalue $-m(m+2)$ (see Section 8.3 of \cite{Far08}). 
The following lemma is a direct consequence of %the definition of $E_m$ and 
(1) of Lemma \ref{lem:Schur}. 

\begin{lemma}\label{lem:eigenbasis}
    Let $\{v_{m,\alpha}: \alpha=-m,-m+2,\ldots,m\}$ be an orthonormal basis of $ V_m$ with respect to the $\pi_m$-invariant inner product $\langle\ ,\ \rangle$. Then 
    $$\{{\sqrt{m+1}}\langle \pi_m(\mathrm{g})v_{m,\alpha},v_{m,\alpha'}\rangle: \alpha,\alpha'\in\{-m,-m+2,\ldots,m\}\}$$
    is an orthonormal basis of $E_m$. 
\end{lemma}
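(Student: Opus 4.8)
The plan is to unwind the definition of $M_m$ supplied by the Peter--Weyl decomposition and then apply the first Schur orthogonality relation essentially verbatim. First I would recall that, as stated above, $M_m$ is \emph{by definition} the linear span of the matrix coefficients $\mathrm{g}\mapsto\langle\pi_m(\mathrm{g})u,v\rangle$ with $u,v\in\mathcal{P}_m$. Since $\langle\,,\,\rangle$ is linear in its first slot and conjugate-linear in its second, while each $\pi_m(\mathrm{g})$ is linear, expanding $u=\sum_\alpha c_\alpha v_{m,\alpha}$ and $v=\sum_{\alpha'}d_{\alpha'}v_{m,\alpha'}$ in the given orthonormal basis gives
\[
\langle\pi_m(\mathrm{g})u,v\rangle=\sum_{\alpha,\alpha'}c_\alpha\,\overline{d_{\alpha'}}\,\langle\pi_m(\mathrm{g})v_{m,\alpha},v_{m,\alpha'}\rangle .
\]
Hence the $(m+1)^2$ functions $\langle\pi_m(\mathrm{g})v_{m,\alpha},v_{m,\alpha'}\rangle$, with $\alpha,\alpha'\in\{-m,-m+2,\dots,m\}$, already span $M_m$.

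Next I would compute their pairwise inner products in $L^2(G)$ by applying part (1) of Lemma~\ref{lem:Schur} with $u=v_{m,\alpha}$, $v=v_{m,\alpha'}$, $u'=v_{m,\beta}$, $v'=v_{m,\beta'}$:
\[
\int_G \langle\pi_m(\mathrm{g})v_{m,\alpha},v_{m,\alpha'}\rangle\,\overline{\langle\pi_m(\mathrm{g})v_{m,\beta},v_{m,\beta'}\rangle}\,{\rm d}\mu(\mathrm{g})
=\frac{1}{m+1}\langle v_{m,\alpha},v_{m,\beta}\rangle\,\overline{\langle v_{m,\alpha'},v_{m,\beta'}\rangle}
=\frac{\delta_{\alpha,\beta}\,\delta_{\alpha',\beta'}}{m+1},
\]
where the last equality uses orthonormality of $\{v_{m,\alpha}\}$. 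Thus, after multiplying by $\sqrt{m+1}$, the family $\{\sqrt{m+1}\,\langle\pi_m(\mathrm{g})v_{m,\alpha},v_{m,\alpha'}\rangle\}$ is orthonormal in $L^2(G)$; being orthonormal it is in particular linearly independent, and since it also spans $M_m$ it is an orthonormal basis of $M_m$. (As a byproduct this records $\dim M_m=(m+1)^2$, so no separate dimension count is needed.)

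There is no genuine obstacle here: the lemma is a bookkeeping combination of the definition of $M_m$ with Lemma~\ref{lem:Schur}(1). The only points deserving a moment's care are that the matrix coefficient is sesquilinear in $(u,v)$ with respect to the $G$-invariant inner product — which is precisely the setting in which Lemma~\ref{lem:Schur}(1) is formulated — and the elementary linear-algebra fact that a spanning orthonormal set is automatically a basis.
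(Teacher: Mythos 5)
Your argument is correct and is exactly the route the paper intends: the paper offers no separate proof, merely observing the lemma is "a direct consequence of the definition of $M_m$ and (1) of Lemma~\ref{lem:Schur}," which is precisely your combination of the spanning property from the definition of $M_m$ with the first Schur orthogonality relation and the $\sqrt{m+1}$ normalization.
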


For $m,n\in\mathbb{Z}_{\geq 0}$, let $f,g$ be eigenfunctions of $\Delta_G$ such that 
    $$\Delta_{G} f=-m(m+2)f,\ \ \ \Delta_{G} g=-n(n+2)g. $$
Using the above lemma, we may write 
\begin{align}\label{eq: f=sum}
    f(\mathrm{g})=\sum_{\alpha,\alpha'\in\{-m,-m+2,\ldots,m\}}a_{\alpha,\alpha'}{\sqrt{m+1}}\langle \pi_m(\mathrm{g})v_{m,\alpha},v_{m,\alpha'}\rangle,
\end{align}
\begin{align}\label{eq: g=sum}
g(\mathrm{g})=\sum_{\beta,\beta'\in\{-n,-n+2,\ldots,n\}}b_{\beta,\beta'}{\sqrt{n+1}}\langle \pi_{n}(\mathrm{g})v_{n,\beta},v_{n,\beta'}\rangle,
\end{align}
where $a_{\alpha,\alpha'}, b_{\beta,\beta'}\in \mathbb{C}$ such that 
\begin{align}\label{eq: f,g,l2}
    \|f\|_{L^2(G)}=\|a_{\alpha,\alpha'}\|_{\ell^2_{\alpha,\alpha'}}\text{ and }\|g\|_{L^2(G)}=\|b_{\beta,\beta'}\|_{\ell^2_{\beta,\beta'}}.
\end{align}
To prove Theorem \ref{mainthm: 1}, we need the following general form for the product $fg$.  

\begin{lemma}\label{lem: fg=sum}
With the notation of Theorem \ref{thm:CG}, and $f,g$ in \eqref{eq: f=sum}, \eqref{eq: g=sum}, we have 
\begin{align*}%\label{eq: fg=sum}
    fg    &=(m+1)^{\frac12}(n+1)^{\frac12}\sum_k\sum_{\alpha,\alpha',\beta,\beta',\gamma,\gamma'}a_{\alpha,\alpha'}b_{\beta,\beta'} \overline{C^{k,\gamma}_{m,\alpha;n,\beta}}C^{k,\gamma'}_{m,\alpha';n,\beta'}\langle  \pi_k(\mathrm{g})(u_{k,\gamma}), u_{k,\gamma'}\rangle.
\end{align*}
\end{lemma}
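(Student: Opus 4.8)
\textbf{Proof plan for Lemma \ref{lem: fg=sum}.}

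The plan is to multiply the two expressions \eqref{eq: f=sum} and \eqref{eq: g=sum} termwise, recognize the resulting product of matrix entries as a matrix entry of the tensor product representation $\pi_m\otimes\pi_n$, and then change bases using the Clebsch--Gordan decomposition of Theorem \ref{thm:CG} to re-express everything in terms of matrix entries of the irreducible constituents $\pi_k$. The single computational identity underlying everything is that for $u_m\otimes u_n,\ v_m\otimes v_n\in\mathcal P_m\otimes\mathcal P_n$,
\begin{align*}
\langle\pi_m(\mathrm g)u_m,v_m\rangle\,\langle\pi_n(\mathrm g)u_n,v_n\rangle
=\langle(\pi_m\otimes\pi_n)(\mathrm g)(u_m\otimes u_n),\,v_m\otimes v_n\rangle,
\end{align*}
which is immediate from the definition of the tensor product inner product and the tensor product representation.

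First I would substitute: from \eqref{eq: f=sum} and \eqref{eq: g=sum},
\begin{align*}
fg=(m+1)^{\frac12}(n+1)^{\frac12}\sum_{\alpha,\alpha',\beta,\beta'}a_{\alpha,\alpha'}b_{\beta,\beta'}\,\langle(\pi_m\otimes\pi_n)(\mathrm g)(v_{m,\alpha}\otimes v_{n,\beta}),\,v_{m,\alpha'}\otimes v_{n,\beta'}\rangle,
\end{align*}
using the displayed tensor identity. Next I would insert the change of basis \eqref{eq:base change} into both slots of the inner product: write $v_{m,\alpha}\otimes v_{n,\beta}=\sum_k\overline{C^{k,\alpha+\beta}_{m,\alpha;n,\beta}}\,u_{k,\alpha+\beta}$ and similarly $v_{m,\alpha'}\otimes v_{n,\beta'}=\sum_{k'}\overline{C^{k',\alpha'+\beta'}_{m,\alpha';n,\beta'}}\,u_{k',\alpha'+\beta'}$. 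Expanding the inner product bilinearly (conjugate-linear in the second slot, so the coefficient $\overline{C^{k',\ldots}_{\ldots}}$ contributes $C^{k',\alpha'+\beta'}_{m,\alpha';n,\beta'}$), one gets a double sum over $k,k'$ of $\overline{C^{k,\alpha+\beta}_{m,\alpha;n,\beta}}\,C^{k',\alpha'+\beta'}_{m,\alpha';n,\beta'}\,\langle(\pi_m\otimes\pi_n)(\mathrm g)u_{k,\alpha+\beta},u_{k',\alpha'+\beta'}\rangle$. By part (1) of Theorem \ref{thm:CG}, each $W_k$ is an invariant subspace on which the representation acts as $\pi_k$, and the $W_k$ are mutually orthogonal; hence $\langle(\pi_m\otimes\pi_n)(\mathrm g)u_{k,\alpha+\beta},u_{k',\alpha'+\beta'}\rangle=\delta_{k,k'}\langle\pi_k(\mathrm g)u_{k,\alpha+\beta},u_{k,\alpha'+\beta'}\rangle$, collapsing the $k'$ sum. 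Collecting terms and noting $C^{k',\alpha'+\beta'}_{m,\alpha';n,\beta'}=\overline{\overline{C^{k,\alpha'+\beta'}_{m,\alpha';n,\beta'}}}$ so the two Clebsch--Gordan factors combine into $\overline{C^{k,\alpha+\beta}_{m,\alpha;n,\beta}C^{k,\alpha'+\beta'}_{m,\alpha';n,\beta'}}$, one arrives at the claimed formula.

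I do not expect any serious obstacle here; the lemma is essentially bookkeeping. The only points requiring a little care are (i) getting the complex conjugations right — the inner product is conjugate-linear in the second argument, and \eqref{eq:base change} has a conjugated Clebsch--Gordan coefficient, so one must track which factors end up conjugated; and (ii) justifying the vanishing of cross terms between different $W_k$, which is exactly the content of part (1) of Theorem \ref{thm:CG} (orthogonality of the constituent subspaces plus invariance). All sums are finite, so there are no convergence issues to address.
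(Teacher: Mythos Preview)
Your proposal is correct and follows essentially the same route as the paper: multiply termwise, pass to a tensor-product matrix entry, expand both slots via \eqref{eq:base change}, and collapse the cross terms in $k,k'$ using the invariance and mutual orthogonality of the $W_k$ from part (1) of Theorem \ref{thm:CG}. The only minor point to watch is the final conjugation bookkeeping: from conjugate-linearity in the second slot you obtain $\overline{C^{k,\alpha+\beta}_{m,\alpha;n,\beta}}\,C^{k,\alpha'+\beta'}_{m,\alpha';n,\beta'}$, which matches the stated $\overline{C^{k,\alpha+\beta}_{m,\alpha;n,\beta}C^{k,\alpha'+\beta'}_{m,\alpha';n,\beta'}}$ because the Clebsch--Gordan coefficients in the paper's construction are real --- this is exactly how the paper handles it as well.
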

\begin{proof}
Using tensor products, we write  
\begin{align*}
    \langle \pi_m(\mathrm{g})v_{m,\alpha},v_{m,\alpha'}\rangle\langle \pi_{n}(\mathrm{g})v_{n,\beta},v_{n,\beta'}\rangle
    &=\langle(\pi_m\otimes\pi_{n}) (\mathrm{g})(v_{m,\alpha}\otimes v_{n,\beta}),v_{m,\alpha'}\otimes v_{n,\beta'}\rangle.
\end{align*}

Applying equation \eqref{eq:base change}, we have 
$$ v_{m,\alpha}\otimes v_{n,\beta}=\sum_{k,\gamma} \overline{C^{k,\gamma}_{m,\alpha;n,\beta}}u_{k,\gamma}$$
and
$$
\ v_{m,\alpha'}\otimes v_{n,\beta'}=\sum_{k,\gamma'} \overline{C^{k,\gamma'}_{m,\alpha';n,\beta'}}u_{k,\gamma'}.$$
Applying (1) of Theorem \ref{thm:CG}, we obtain  
$$(\pi_m\otimes\pi_n)(\mathrm{g})(v_{m,\alpha}\otimes v_{n,\beta})=\sum_{k,\gamma}\overline{C^{k,\gamma}_{m,\alpha;n,\beta}} \pi_k(\mathrm{g})(u_{k,\gamma}),$$
where in the summation $k$ ranges over $m-n,m-n+2,\ldots,m+n$. 
Using the fact that $\langle u_{k,\gamma},u_{k',\gamma'}\rangle=0$ for distinct $k,k'$, we have 
\begin{align*}
    \langle(\pi_m\otimes\pi_n)(\mathrm{g})(v_{m,\alpha}\otimes v_{n,\beta}),v_{m,\alpha'}\otimes v_{n,\beta'}\rangle
    &=\l\langle\sum_{k,\gamma}\overline{C^{k,\gamma}_{m,\alpha;n,\beta}} \pi_k(\mathrm{g})(u_{k,\gamma}),\sum_{k,\gamma'} \overline{C^{k,\gamma'}_{m,\alpha';n,\beta'}}u_{k,\gamma'}\r\rangle\\
    &=\sum_{k,\gamma,\gamma'} \l\langle \overline{C^{k,\gamma}_{m,\alpha;n,\beta}} \pi_k(\mathrm{g})(u_{k,\gamma}), \overline{C^{k,\gamma'}_{m,\alpha';n,\beta'}}u_{k,\gamma'}\r\rangle\\
    &=\sum_{k,\gamma,\gamma'} \overline{C^{k,\gamma}_{m,\alpha;n,\beta}}C^{k,\gamma'}_{m,\alpha';n,\beta'}\langle  \pi_k(\mathrm{g})(u_{k,\gamma}), u_{k,\gamma'}\rangle.
\end{align*}
Thus, by \eqref{eq: f=sum} and \eqref{eq: g=sum}, we conclude that
\begin{align*}
    fg&=(m+1)^{\frac12}(n+1)^{\frac12}\sum_{\alpha,\alpha',\beta,\beta'}a_{\alpha,\alpha'}b_{\beta,\beta'}\sum_{k,\gamma,\gamma'} \overline{C^{k,\gamma}_{m,\alpha;n,\beta}}C^{k,\gamma'}_{m,\alpha';n,\beta'}\langle  \pi_k(\mathrm{g})(u_{k,\gamma}), u_{k,\gamma'}\rangle \\
    &=(m+1)^{\frac12}(n+1)^{\frac12}\sum_{k}\sum_{\alpha,\alpha',\beta,\beta',\gamma,\gamma'}a_{\alpha,\alpha'}b_{\beta,\beta'} \overline{C^{k,\gamma}_{m,\alpha;n,\beta}}C^{k,\gamma'}_{m,\alpha';n,\beta'}\langle  \pi_k(\mathrm{g})(u_{k,\gamma}), u_{k,\gamma'}\rangle.
\end{align*}
\end{proof}

As an immediate corollary, we have the standard 
\begin{lemma}\label{lem: fg G supp}
        For $m,n\in\mathbb{Z}_{\geq 0}$, let $f,g$ be eigenfunctions of $\Delta_{\bS^3}$ such that 
    $$\Delta_{\bS^3} f=-m(m+2)f \quad
    \text{and} \quad
   \Delta_{\bS^3} g=-n(n+2)g. $$
    Assume that $m\geq n$.
    Then the product $fg$ is a sum of eigenfunctions of $\Delta_{\bS^3}$ with eigenvalues $-k(k+2)$, where $k\in \{m-n,m-n+2,\ldots,m+n\}$. 
\end{lemma}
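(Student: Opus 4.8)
The plan is to deduce this directly from Lemma \ref{lem: fg=sum} via the identification $\bS^3 \cong G = \mathrm{SU}(2)$. Under this identification the Laplace--Beltrami operator $\Delta_{\bS^3}$ becomes $\Delta_G$, and by the Peter--Weyl description recalled above, the eigenspace of $\Delta_G$ with eigenvalue $-k(k+2)$ is precisely $M_k$, the span of matrix entries $\langle \pi_k(\mathrm{g}) u, v\rangle$ with $u,v \in \mathcal{P}_k$. First I would expand $f$ and $g$ in the orthonormal bases of $M_m$ and $M_n$ furnished by Lemma \ref{lem:eigenbasis}, obtaining the representations \eqref{eq: f=sum} and \eqref{eq: g=sum}. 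Then Lemma \ref{lem: fg=sum} expresses $fg$ as a finite linear combination of the functions $\mathrm{g} \mapsto \langle \pi_k(\mathrm{g})(u_{k,\alpha+\beta}), u_{k,\alpha'+\beta'}\rangle$, where, by part (1) of Theorem \ref{thm:CG}, the index $k$ runs over $\{m+n, m+n-2, \ldots, m-n\}$ and the vectors $u_{k,\cdot}$ lie in $W_k$.

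The only point requiring a word of justification is that each such function is an eigenfunction of $\Delta_{\bS^3}$ with eigenvalue $-k(k+2)$. This follows from part (1) of Theorem \ref{thm:CG}: the subspace $W_k \subset \mathcal{P}_m \otimes \mathcal{P}_n$ is invariant under the tensor product representation, and the restriction of that representation to $W_k$ is unitarily isomorphic to $\pi_k$; hence $\langle \pi_k(\mathrm{g})(u_{k,\gamma}), u_{k,\gamma'}\rangle$ is genuinely a matrix entry of the irreducible representation $\pi_k$, and therefore lies in $M_k$. Collecting, for each fixed $k$ in the stated range, the terms of the expansion of $fg$ indexed by that $k$, we obtain $fg = \sum_k h_k$ with $h_k \in M_k$ and $k \in \{m+n, m+n-2, \ldots, m-n\}$, which is exactly the asserted decomposition. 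There is no genuine obstacle here: the content of the statement is entirely contained in the Clebsch--Gordan decomposition of $\pi_m \otimes \pi_n$, and the proof is a bookkeeping exercise once Lemma \ref{lem: fg=sum} is in hand.
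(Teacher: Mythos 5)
Your proposal is correct and follows exactly the paper's own route: expand $f,g$ as in \eqref{eq: f=sum}--\eqref{eq: g=sum}, invoke Lemma \ref{lem: fg=sum}, and observe via part (1) of Theorem \ref{thm:CG} that each resulting term $\langle \pi_k(\mathrm{g})(u_{k,\gamma_1}), u_{k,\gamma_2}\rangle$ is a matrix entry of $\pi_k$, hence an eigenfunction of $\Delta_{\bS^3}$ with eigenvalue $-k(k+2)$ for $k\in\{m-n,m-n+2,\ldots,m+n\}$. No gaps; this matches the paper's proof essentially verbatim.
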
 
\begin{proof}
    By Lemma \ref{lem: fg=sum}, we see that $fg$ is a sum of functions of the form $\langle  \pi_k(\mathrm{g})(u_{k,\gamma_1}), u_{k,\gamma_2}\rangle$, where $k$ ranges over $m-n,m-n+2,\ldots,m+n$, and $\{u_{k,\gamma}\}_\gamma$ is an orthonormal basis of the underlying vector space of the irreducible representation $\pi_k$. Since any $\langle  \pi_k(\mathrm{g})(u_{k,\gamma_1}), u_{k,\gamma_2}\rangle$ is an eigenfunction of $\Delta_G$ with eigenvalue $-k(k+2)$, the proof is complete.  
\end{proof}

\section{Sharp bilinear eigenfunction estimate on $\mathbb{S}^3$: Proof of Theorem \ref{mainthm: 1}}\label{sec: bilinear}

In this section, we prove Theorem \ref{mainthm: 1}. We identify $\bS^3$ with $\mathrm{SU}(2)$, so that we may apply Lemma \ref{lem: fg=sum} to express the product of eigenfunctions as a linear combination of matrix entries of irreducible representations. After applying the Schur orthogonality relations, to finish the proof it suffices to use the orthogonality relations for the Clebsch--Gordan coefficients detailed in Theorem \ref{thm:CG} and some elementary estimates.

\begin{proof}[Proof of Theorem \ref{mainthm: 1}]
We assume $m\geq 2n$; the case $n\leq m< 2n$ can be handled by Sogge's $L^4$ eigenfunction bound \cite{Sog88} combined with H\"older's inequality:
$$\|fg\|_{L^2}\leq \|f\|_{L^4}\|g\|_{L^4}\lesssim (m+1)^{\frac14}(n+1)^{\frac14}\|f\|_{L^2}\|g\|_{L^2}\lesssim (n+1)^{\frac12}\|f\|_{L^2}\|g\|_{L^2}. $$
We identify $\bS^3$ with the group $\mathrm{SU}(2)$. 
With the notation of Theorem \ref{thm:CG}, we write $f,g$ as in \eqref{eq: f=sum}, \eqref{eq: g=sum}, with \eqref{eq: f,g,l2}. 
It suffices to prove 
$$\|fg\|_{L^2(G)}\lesssim (n+1)^{\frac12}\|a_{\alpha,\alpha'}\|_{l^2_{\alpha,\alpha'}}\|b_{\beta,\beta'}\|_{l^2_{\beta,\beta'}}.$$
Then Lemma \ref{lem: fg=sum} provides an explicit expression for the product $fg$. Using this and applying (2) of Lemma \ref{lem:Schur}, we obtain 
\begin{align*}
    \|fg\|_{L^2(G)}^2&=(m+1)(n+1)\sum_k \l\|\sum_{\alpha,\alpha',\beta,\beta',\gamma,\gamma'}a_{\alpha,\alpha'}b_{\beta,\beta'}\overline{C^{k,\gamma}_{m,\alpha;n,\beta}}C^{k,\gamma'}_{m,\alpha';n,\beta'}\langle  \pi_k(\mathrm{g})(u_{k,\gamma}), u_{k,\gamma'}\rangle\r\|^2_{L^2(G)}\\
    &=(m+1)(n+1)\sum_k\sum_{\substack{\alpha,\alpha',\beta,\beta',\gamma,\gamma'\\ \tilde{\alpha},\tilde{\alpha}',\tilde{\beta},\tilde{\beta}',\tilde{\gamma},\tilde{\gamma}'}}
    a_{\alpha,\alpha'}b_{\beta,\beta'}\overline{C^{k,\gamma}_{m,\alpha;n,\beta}}C^{k,\gamma'}_{m,\alpha';n,\beta'}\overline{ a_{\tilde{\alpha},\tilde{\alpha}'}b_{\tilde{\beta},\tilde{\beta}'} }C^{k,\tilde{\gamma}}_{m,\tilde{\alpha};n,\tilde{\beta}}\overline{C^{k,\tilde{\gamma}'}_{m,\tilde{\alpha}';n,\tilde{\beta}'}}\\
    &\ \ \ \cdot \int_G
    \langle  \pi_k(\mathrm{g})(u_{k,\gamma}), u_{k,\gamma'}\rangle\overline{\langle  \pi_k(\mathrm{g})(u_{k,\tilde{\gamma}}), u_{k,\tilde{\gamma}'}\rangle}
    \ {\rm d}\mu(\mathrm{g}).
\end{align*}
Recall from Theorem \ref{thm:CG} that $\{u_{k,\gamma}: \gamma=-k,-k+2,\ldots,k\}$ is an orthonormal family of $ V_m\otimes V_n$, then we may apply (1) of Lemma \ref{lem:Schur}, to obtain 
\begin{align*}
    \|fg\|_{L^2(G)}^2
     &=(m+1)(n+1)\sum_k\sum_{\substack{\alpha,\alpha',\beta,\beta',\gamma,\gamma'\\ \tilde{\alpha},\tilde{\alpha}',\tilde{\beta},\tilde{\beta}',\tilde{\gamma},\tilde{\gamma}'}}
    a_{\alpha,\alpha'}b_{\beta,\beta'}\overline{C^{k,\gamma}_{m,\alpha;n,\beta}}C^{k,\gamma'}_{m,\alpha';n,\beta'}\overline{ a_{\tilde{\alpha},\tilde{\alpha}'}b_{\tilde{\beta},\tilde{\beta}'} }C^{k,\tilde{\gamma}}_{m,\tilde{\alpha};n,\tilde{\beta}}\overline{C^{k,\tilde{\gamma}'}_{m,\tilde{\alpha}';n,\tilde{\beta}'}}\\
    &\ \ \ \cdot 
    \frac{1}{k+1}\langle u_{k,\gamma}, u_{k,\tilde{\gamma}}\rangle \overline{ \langle u_{k,\gamma'}, u_{k,\tilde{\gamma}'}\rangle}\\
    &=(m+1)(n+1)\sum_k\sum_{\substack{\alpha,\alpha',\beta,\beta',\gamma,\gamma'\\ \tilde{\alpha},\tilde{\alpha}',\tilde{\beta},\tilde{\beta}',\tilde{\gamma},\tilde{\gamma}'}}
    a_{\alpha,\alpha'}b_{\beta,\beta'}\overline{C^{k,\gamma}_{m,\alpha;n,\beta}}C^{k,\gamma'}_{m,\alpha';n,\beta'}\overline{ a_{\tilde{\alpha},\tilde{\alpha}'}b_{\tilde{\beta},\tilde{\beta}'} }C^{k,\tilde{\gamma}}_{m,\tilde{\alpha};n,\tilde{\beta}}\overline{C^{k,\tilde{\gamma}'}_{m,\tilde{\alpha}';n,\tilde{\beta}'}}\\
    &\ \ \ \cdot 
    \frac{1}{k+1}
    \delta_{\gamma,\tilde{\gamma}}\delta_{\gamma',\tilde{\gamma}'}\\
    &=(n+1)\sum_k \frac{m+1}{k+1}\sum_{\substack{\alpha,\alpha',\beta,\beta',\gamma,\gamma'\\ \tilde{\alpha},\tilde{\alpha}',\tilde{\beta},\tilde{\beta}'}}
    a_{\alpha,\alpha'}b_{\beta,\beta'}\overline{C^{k,\gamma}_{m,\alpha;n,\beta}}C^{k,\gamma'}_{m,\alpha';n,\beta'}\overline{ a_{\tilde{\alpha},\tilde{\alpha}'}b_{\tilde{\beta},\tilde{\beta}'} }C^{k,\gamma}_{m,\tilde{\alpha};n,\tilde{\beta}}\overline{C^{k,\gamma'}_{m,\tilde{\alpha}';n,\tilde{\beta}'}}.
\end{align*}

Now for $\gamma,\gamma'\in \{-m-n,-m-n+2,\ldots,m+n\}$, let 
$$S(k,\gamma,\gamma')=\sum_{\alpha,\alpha',\beta,\beta'}a_{\alpha,\alpha'}b_{\beta,\beta'}\overline{C^{k,\gamma}_{m,\alpha;n,\beta}
}C^{k,\gamma'}_{m,\alpha';n,\beta'}.
$$
Then 
$$\|fg\|_{L^2(G)}^2=(n+1)\sum_{k,\gamma,\gamma'}\frac{m+1}{k+1}\l|S(k,\gamma,\gamma')\r|^2.$$
Recall that $k\in\{m-n,m-n+2,\ldots,m+n\}$, so under the  assumption that $m\geq 2n$, we have crucially that
\begin{align}\label{eq: crucial}
    \frac{m+1}{k+1}\sim 1.
\end{align}
Thus, it suffices to prove 
\begin{align}\label{eq: |S|^2}
  \sum_{k,\gamma,\gamma'}|S(k,\gamma,\gamma')|^2\leq \|a_{\alpha,\alpha'}\|_{\ell^2_{\alpha,\alpha'}}^2\|b_{\beta,\beta'}\|_{\ell^2_{\beta,\beta'}}^2
  =\|a_{\alpha,\alpha'}b_{\beta,\beta'}\|^2_{\ell^2_{\alpha,\alpha',\beta,\beta'}}.
\end{align}
In fact, the above follows as a Bessel's inequality in $\ell^2_{\alpha,\alpha',\beta,\beta'}$---it suffices to check that the vectors 
$$v^{k,\gamma,\gamma'}:=C^{k,\gamma}_{m,\alpha;n,\beta}
\overline{C^{k,\gamma'}_{m,\alpha';n,\beta'}}$$
form an orthonormal family in $\ell^2_{\alpha,\alpha',\beta,\beta'}$. For this purpose, we check by the orthogonality relations \eqref{eq: CG Ort 2} for the Clebsch--Gordan coefficients, that 
\begin{align*}
\langle v^{k,\gamma,\gamma'}, v^{\tilde{k},\tilde{\gamma},\tilde{\gamma}'}\rangle_{\ell^2_{\alpha,\alpha',\beta,\beta'}}
&=\sum_{\alpha,\alpha',\beta,\beta'}C^{k,\gamma}_{m,\alpha;n,\beta}
\overline{C^{k,\gamma'}_{m,\alpha';n,\beta'}}\overline{C^{\tilde{k},\tilde{\gamma}}_{m,\alpha;n,\beta}
}C^{\tilde{k},\tilde{\gamma}'}_{m,\alpha';n,\beta'}\\
&=\l(\sum_{\alpha,\beta}
C^{k,\gamma}_{m,\alpha;n,\beta}\overline{C^{\tilde{k},\tilde{\gamma}}_{m,\alpha;n,\beta}}\r) 
\l(\sum_{\alpha',\beta'}C^{\tilde{k},\tilde{\gamma}'}_{m,\alpha';n,\beta'}
\overline{C^{k,\gamma'}_{m,\alpha';n,\beta'}}\r)
\\
&=\delta_{k,\tilde{k}} \delta_{\gamma,\tilde{\gamma}}\cdot \delta_{\tilde{k},k} \delta_{\tilde{\gamma}',\gamma'}
=\delta_{k,\tilde{k}} \delta_{\gamma,\tilde{\gamma}}\delta_{\gamma',\tilde{\gamma}'}. 
\end{align*}
The proof is completed. 

\end{proof}

\begin{remark}\label{rem: Sogge?}
Adapting our argument to Sogge’s $L^4$ eigenfunction estimate on $\bS^3$ appears challenging. While our approach to the bilinear estimate avoids oscillatory integral techniques, Sogge’s linear estimate relies on them, indicating that the two problems call for different perspectives.
\end{remark}

\begin{remark}\label{generalization}
By identifying the standard sphere $\mathbb{S}^d$ with the symmetric space
$\mathrm{SO}(d+1)/\mathrm{SO}(d)$ and exploiting the associated spherical
representations of the special orthogonal group, the method developed in this
section extends naturally to spheres of arbitrary dimension, and can 
yield sharp bilinear eigenfunction estimates on $\mathbb{S}^d$, thereby
recovering the results of \cite{BGT05,BGT052} in the spherical setting.
Let the \emph{spherical} Clebsch--Gordan coefficients $C_{m,n}^k$ be defined by
the decomposition
\[
e_m \otimes e_n = \sum_k C_{m,n}^k\, e_k,
\]
where $e_m,e_n,e_k$ denote the unit spherical vectors in the corresponding
spherical representations of $\mathrm{SO}(d+1)$ indexed by the nonnegative integers $m,n,k$ respectively.
The desired bilinear eigenfunction estimate can be reduced to establishing suitable
bounds for these coefficients. Specifically, in the high-low regime
$m \ge 2n$, we claim that
\[
\sup_k |C_{m,n}^k| \;\lesssim\;
\begin{cases}
(n+1)^{-1/4}, & d=2, \\ 
(n+1)^{-1/2}, & d \ge 3.
\end{cases}
\]
In particular, the case $d=3$ follows directly from the orthogonality relations
for general Clebsch--Gordan coefficients, and may be viewed as a reformulation of the
argument in the proof of Theorem~\ref{mainthm: 1}. The general cases can be treated by expressing $C_{m,n}^k$ in terms of
Gegenbauer linearization coefficients, for which explicit formulas are
available (see \cite{AAR99}), allowing for a direct analysis
across dimensions. 
Overall, our approach provides a purely algebraic and structurally transparent alternative to the classical microlocal techniques.  
\end{remark}

\begin{corollary}[Sharp multilinear eigenfunction estimate]\label{cor: Trilinear} Let $k\in\mathbb{Z}_{\geq 2}$, and let $m_i\in\mathbb{Z}_{\geq 0}$, $i=1,2,\ldots,k$. Assume that 
$m_1\geq m_2\geq\cdots\geq m_k$. Let $f_i$ be an eigenfunction of $\Delta_{\bS^3}$ such that 
    $$\Delta_{\bS^3} f_i=-m_i(m_i+2)f_i, \ i=1,\ldots,k.$$    
    Then 
    $$\l\|\prod_{i=1}^kf_i\r\|_{L^2(\bS^3)}\lesssim \l((m_2+1)^{\frac12}\prod_{i=3}^k(m_i+1)\r)\prod_{i=1}^k\|f_i\|_{L^2(\bS^3)}.$$       
\end{corollary}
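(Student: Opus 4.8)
The plan is to prove Corollary \ref{cor: Trilinear} by induction on $k$, using Theorem \ref{mainthm: 1} to peel off one factor at a time while controlling the spectral support that arises from the products. First I would handle the base case $k=2$: this is precisely Theorem \ref{mainthm: 1}, which gives $\|f_1 f_2\|_{L^2(\bS^3)}\lesssim (m_2+1)^{1/2}\|f_1\|_{L^2}\|f_2\|_{L^2}$, matching the claimed bound since the product $\prod_{i=3}^k(m_i+1)$ is empty.

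For the inductive step, suppose the estimate holds for $k-1$ factors. Given $f_1,\dots,f_k$ with $m_1\geq\cdots\geq m_k$, I would group the product as $(\prod_{i=1}^{k-1}f_i)\cdot f_k$, or more efficiently, absorb $f_k$ by pairing it with $f_1$. By Lemma \ref{lem: fg G supp}, the product $f_1 f_k$ decomposes as $\sum_{\ell} h_\ell$ where $h_\ell$ is an eigenfunction of $\Delta_{\bS^3}$ with eigenvalue $-\ell(\ell+2)$ and $\ell$ ranges over $\{m_1-m_k, m_1-m_k+2,\dots,m_1+m_k\}$; in particular each such $\ell$ satisfies $m_1-m_k\leq\ell\leq m_1+m_k$, so $\ell+1\sim m_1+1$ (using $m_k\leq m_1$), and by orthogonality $\sum_\ell\|h_\ell\|_{L^2}^2=\|f_1 f_k\|_{L^2}^2\lesssim (m_k+1)\|f_1\|_{L^2}^2\|f_k\|_{L^2}^2$ by Theorem \ref{mainthm: 1}. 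Then $\prod_{i=1}^k f_i=\sum_\ell h_\ell\cdot\prod_{i=2}^{k-1}f_i$, and I would apply the inductive hypothesis to each of the $k-1$ functions $h_\ell, f_2,\dots,f_{k-1}$. Since the largest relevant frequency among these is $\max(\ell,m_2)\lesssim m_1$ and the second largest is $\leq m_2$, the inductive hypothesis yields $\|h_\ell\prod_{i=2}^{k-1}f_i\|_{L^2}\lesssim (m_2+1)^{1/2}(\prod_{i=3}^{k-1}(m_i+1))\|h_\ell\|_{L^2}\prod_{i=2}^{k-1}\|f_i\|_{L^2}$. Summing over $\ell$ and using Cauchy--Schwarz together with the fact that the number of relevant $\ell$ is $m_k+1$, I would get $\sum_\ell\|h_\ell\|_{L^2}\leq (m_k+1)^{1/2}(\sum_\ell\|h_\ell\|_{L^2}^2)^{1/2}\lesssim (m_k+1)^{1/2}(m_k+1)^{1/2}\|f_1\|_{L^2}\|f_k\|_{L^2}=(m_k+1)\|f_1\|_{L^2}\|f_k\|_{L^2}$, which supplies exactly the factor $(m_k+1)$ needed to complete the induction.

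An alternative and perhaps cleaner route, which I would also consider, is a direct greedy argument: bound $\|\prod_i f_i\|_{L^2}\leq\|f_1 f_k\|_{L^2\to\text{(via successive products)}}$... but the inductive scheme above is the cleanest. One subtlety to be careful about: when applying Theorem \ref{mainthm: 1} to $f_1 f_k$ I must check which of the two has the larger eigenvalue --- it is $f_1$, so the small factor is $(m_k+1)^{1/2}$, as used. Another subtlety is that after forming $h_\ell$, its eigenvalue index $\ell$ could in principle exceed $m_2$ (indeed $\ell$ can be as large as $m_1+m_k$), so when I invoke the inductive hypothesis on $\{h_\ell,f_2,\dots,f_{k-1}\}$ I must reorder: the largest is $h_\ell$ (frequency $\ell\sim m_1$) and then $f_2,\dots,f_{k-1}$ in decreasing order, so the hypothesis contributes $(m_2+1)^{1/2}\prod_{i=3}^{k-1}(m_i+1)$, and the accumulated factor from all the peeling is $\prod_{i=3}^k(m_i+1)$ times $(m_2+1)^{1/2}$, as claimed.

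The main obstacle I anticipate is purely bookkeeping rather than conceptual: ensuring the spectral-support indices are correctly tracked through the recursion and that the Cauchy--Schwarz step over the $O(m_k+1)$ values of $\ell$ is combined correctly with the $L^2$-orthogonality of the $h_\ell$'s so that the square root of the count times the square root of the summed squares produces exactly one clean factor of $(m_k+1)$ --- no more, no less. There is no hard analytic input beyond Theorem \ref{mainthm: 1} and Lemma \ref{lem: fg G supp} (the latter giving both the eigenvalue range and, via the Peter--Weyl orthogonality, the Pythagorean identity $\sum_\ell\|h_\ell\|_{L^2}^2=\|f_1 f_k\|_{L^2}^2$). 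One should also note that sharpness of the corollary follows by testing on highest-weight (zonal/Gaussian-beam) eigenfunctions, though that is a remark rather than part of the proof.
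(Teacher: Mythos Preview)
Your inductive argument is correct, but it is considerably more elaborate than the paper's proof. The paper simply applies H\"older's inequality to pull out $f_3,\dots,f_k$ in $L^\infty$, invokes the classical eigenfunction bound $\|f_i\|_{L^\infty(\bS^3)}\lesssim (m_i+1)\|f_i\|_{L^2(\bS^3)}$ (valid on any compact manifold), and then handles $f_1f_2$ via Theorem~\ref{mainthm: 1}. This one-line argument avoids the spectral decomposition of $f_1f_k$, the Cauchy--Schwarz step over the $m_k+1$ values of $\ell$, and all the induction bookkeeping.

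On that bookkeeping: your justification for the inductive bound $\|h_\ell\prod_{i=2}^{k-1}f_i\|_{L^2}\lesssim (m_2+1)^{1/2}\prod_{i=3}^{k-1}(m_i+1)\|h_\ell\|_{L^2}\prod_{i=2}^{k-1}\|f_i\|_{L^2}$ is slightly incomplete. You write that ``the second largest is $\leq m_2$'', which controls the square-root factor, but when $\ell<m_3$ the remaining product $\prod_{i\geq 3}(n_i+1)$ is \emph{not} literally $\prod_{i=3}^{k-1}(m_i+1)$; rather it equals $(\ell+1)\prod_{i=4}^{k-1}(m_i+1)$, and one must then check that $(m_3+1)^{1/2}(\ell+1)\leq (m_2+1)^{1/2}(m_3+1)$, which does hold since $\ell\leq m_3\leq m_2$. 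Similarly, your later assertion that ``the largest is $h_\ell$ (frequency $\ell\sim m_1$)'' is not always true: $\ell$ can be as small as $m_1-m_k$, which may fall below $m_2$. These are repairable imprecisions rather than genuine gaps---the claimed uniform bound is true after a short monotonicity check---but they illustrate why the paper's direct $L^\infty$ route is preferable here. Your approach does have the minor conceptual advantage of using only Theorem~\ref{mainthm: 1} and Lemma~\ref{lem: fg G supp}, never the $L^\infty$ eigenfunction bound; but since that bound is classical, this is not a real gain.
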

\begin{proof}   
As observed in \cite{BGT052}, it suffices to apply the classical bound valid on any compact manifold \cite{Ava56, Lev52} to all $f_i$ with $i \geq 3$,
\[
\|f_i\|_{L^\infty(\bS^3)} \lesssim (m_i+1)\|f_i\|_{L^2(\bS^3)},
\]
while handling the product $f_1 f_2$ using Theorem \ref{mainthm: 1}.
\end{proof}

\begin{remark}
    As shown in \cite{BGT052}, both Theorem \ref{mainthm: 1} and Corollary \ref{cor: Trilinear} are sharp, as can be seen by testing against zonal spherical harmonics. 
\end{remark}

\section{Refined anisotropic Strichartz estimate on $\bR\times\bT$: Proof of Theorem \ref{mainthm: 2}}\label{sec:  Strichartz}

In this section, we give the proof of Theorem \ref{mainthm: 2}. Before giving the proof, we first present the required measure estimate on $\mathbb{R} \times \mathbb{Z}$ and the counting estimate on $\mathbb{Z}^2$.

\begin{lemma}[Lemma 2.1 of \cite{TT01}, or Lemma 3.1 of \cite{HTT14}]    \label{lem:measure estimate lem}
Let $K\ge 1$. Then
$$ \sup_{C\in \bR,\ \xi'\in \bR \times \bZ} \l|\l\{ \xi\in \bR \times \bZ: C\le |\xi-\xi'|^2 \le C+K   \r\}    \r|\lea K, $$
where the outer $|\cdot|$ denotes the standard measure on $\R\times\bZ$, which is the product of the one-dimensional Lebesgue measure on $\R$ and the counting measure on $\bZ$.

\end{lemma}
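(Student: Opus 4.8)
The plan is to reduce this two-dimensional measure estimate to a one-dimensional sum over the integer coordinate, and then to bound that sum by the \emph{Euclidean} area of the corresponding annulus, which is $\lesssim K$.

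Since $\xi'=(\xi_1',\xi_2')$ has integer second coordinate, the translation $\xi\mapsto\xi-\xi'$ maps $\bR\times\bZ$ onto itself, so it suffices to bound $\big|\{(\eta_1,n)\in\bR\times\bZ:\ C\le \eta_1^2+n^2\le C+K\}\big|$. As the measure on $\bR\times\bZ$ is the product of Lebesgue measure on $\bR$ with counting measure on $\bZ$, this equals $\sum_{n\in\bZ}F(n)$, where for $x\in\bR$
\[
F(x):=\big|\{\eta_1\in\bR:\ C\le \eta_1^2+x^2\le C+K\}\big|=2\Big(\sqrt{(C+K-x^2)_+}-\sqrt{(C-x^2)_+}\Big),
\]
with $(t)_+:=\max(t,0)$ and $F(x)=0$ once $C+K-x^2<0$.

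I would then record two elementary properties of $F$. First, $F(x)\le 2\sqrt K$ for every $x$: when $C-x^2\ge 0$ this is the inequality $\sqrt b-\sqrt a\le\sqrt{b-a}$ for $0\le a\le b$, and when $C-x^2<0$ it follows from $(C+K-x^2)_+\le K$. Second, $F$ is even and unimodal on $\bR$, increasing on $[0,\sqrt{(C)_+}\,]$ and decreasing afterwards; one sees this by writing $F(x)=2K\big(\sqrt{C+K-x^2}+\sqrt{C-x^2}\big)^{-1}$ on $\{C-x^2\ge 0\}$, where the denominator decreases in $|x|$, and $F(x)=2\sqrt{C+K-x^2}$ on the outer shell $\{C-x^2<0\le C+K-x^2\}$, which is manifestly decreasing. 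Being nonnegative, even, and unimodal, $F$ satisfies the standard sum--integral comparison $\sum_{n\in\bZ}F(n)\lesssim\|F\|_{L^\infty}+\int_{\bR}F(x)\,dx$. The first term is $\lesssim\sqrt K\le K$ since $K\ge 1$. For the second, Fubini's theorem identifies $\int_\bR F(x)\,dx$ with the two-dimensional Lebesgue measure of the Euclidean annulus $\{\eta\in\bR^2:\ C\le|\eta|^2\le C+K\}$, which equals $\pi\big((C+K)_+-(C)_+\big)\le\pi K$. Hence $\sum_{n\in\bZ}F(n)\lesssim K$, uniformly in $C$ and $\xi'$.

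The only step that is not entirely routine is the last comparison: one genuinely has to pass through the Euclidean area. Summing the trivial bound $F(n)\le 2\sqrt K$ over the $\lesssim\sqrt{C+K}$ integers $n$ with $F(n)\ne 0$ only yields $\lesssim\sqrt K\,\sqrt{C+K}$, which fails to be $O(K)$ when $C\gg K$; unimodality of $F$ is precisely what allows the sum-to-integral comparison to replace this loss by the harmless $\|F\|_{L^\infty}$ term. Both recorded properties of $F$ are used exactly on the $\lesssim\sqrt K$ ``polar'' lines $n$ with $C<n^2\le C+K$.
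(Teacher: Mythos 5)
Your proof is correct. Note that the paper does not prove this lemma at all: it is quoted directly from Lemma 2.1 of Takaoka--Tzvetkov \cite{TT01} (see also Lemma 3.1 of \cite{HTT14}), so there is no in-paper argument to compare against; your write-up is essentially the standard proof from those references, namely slicing over the integer frequency, bounding each slice length by $2\sqrt K$, and controlling the full sum by the Lebesgue area of the Euclidean annulus, which is $\le \pi K$. All the steps check out: the translation by $\xi'\in\bR\times\bZ$ preserves $\bR\times\bZ$, the slice formula $F(x)=2\bigl(\sqrt{(C+K-x^2)_+}-\sqrt{(C-x^2)_+}\bigr)$ is right, the bound $F\le 2\sqrt K\le 2K$ is right, and the Fubini identification of $\int_\bR F$ with the annulus area is right. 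One small wording caveat: $F$ is not unimodal as a function of $x$ on $\bR$ (it has two symmetric peaks at $\pm\sqrt{C_+}$ and a local minimum at $0$); what you actually prove, and what you use, is that $F$ is even and unimodal as a function of $|x|$, hence piecewise monotone with a bounded number (at most four) of monotone pieces, and the sum--integral comparison $\sum_{n}F(n)\lesssim \int_\bR F+\|F\|_{L^\infty}$ holds in exactly that generality (each monotone piece contributes its integral plus one extra term of size $\|F\|_{L^\infty}$). You also correctly identify why the naive count fails for $C\gg K$ and why passing through the Euclidean area is the essential step.
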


\begin{lemma}\label{lem:counting lem}
Let $N \in \bZ_{\geq 1}.$ Then for any $\varepsilon>0$, the following hold:
\begin{equation}\label{eq:counting lem-1}
 \sup_{k\in \bZ, C\in \bZ } \l|  \{(m,n)\in \bZ^2: |m|, |n|\le N, m^2+n^2+km+kn=C       \}                   \r| \lea_{\varepsilon} N^\varepsilon,
\end{equation}
and
\begin{equation}\label{eq:counting lem-2}
\sup_{k\in \bZ, C\in \bZ } \l|  \{(m,n)\in \bZ^2: m\ne 0, n\ne 0, |m-k|\lea N, |n|\lea N, mn= C       \}                   \r| \lea_{\varepsilon} N^\varepsilon.
\end{equation}
\end{lemma}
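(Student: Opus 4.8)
The statement to prove is Lemma~\ref{lem:counting lem}, which contains two divisor-type counting bounds. I sketch how I would prove each.

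\textbf{Strategy for \eqref{eq:counting lem-2}.} This is the cleaner of the two and I would do it first. Fix $k$ and $C$. We are counting integer pairs $(m,n)$ with $m\neq 0$, $n\neq 0$, $|m-k|\lesssim N$, $|n|\lesssim N$, and $mn=C$. If $C=0$ this is impossible since $m,n\neq 0$, so assume $C\neq 0$. Then $m$ must be a divisor of $C$, and $n=C/m$ is determined by $m$; hence the count is at most the number of divisors of $C$. Since $|m|\lesssim |m-k|+|k|$ and $|n|\lesssim N$, we get $|C|=|mn|\lesssim N\cdot(|k|+N)$, but this is not quite enough because $|k|$ is unbounded. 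The right move is to bound the count directly: the number of divisors of $C$ lying in the window $\{m : |m-k|\lesssim N\}$ is at most the total number of divisors $d(|C|)$, and we can bound $|C|$ by noting that we only care about pairs with $|n|\lesssim N$, so $|C|=|mn|$ with $|n|\le CN$ for some absolute constant; meanwhile $m$ ranges over an interval of length $\lesssim N$, so there are $\lesssim N$ candidate values of $m$, and for each the divisor constraint is one equation. Actually the cleanest bound: the number of solutions is at most $\min(\#\{\text{divisors of }C\}, \text{interval length})$; using the classical divisor bound $d(q)\lesssim_\varepsilon q^\varepsilon$ together with $|C|\lesssim N^2\cdot(1+|k|/N)$ is still awkward. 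I would instead argue: if $|k|\le N^{10}$ then $|C|=|mn|\lesssim N\cdot N^{10}$ so $d(|C|)\lesssim_\varepsilon (N^{11})^{\varepsilon'}\lesssim_\varepsilon N^\varepsilon$ after relabeling $\varepsilon$; if $|k|> N^{10}$ then $|m|\ge |k|-CN > N^{10}/2$, so $|C|=|mn|\ge |m|$, but also for two distinct solutions $m_1<m_2$ in the window we'd need $m_2/m_1 = n_1/n_2$ with $m_2-m_1\lesssim N$ and $m_i\gtrsim N^{10}$, forcing $n_1/n_2$ extremely close to $1$ with $n_i\lesssim N$, hence $n_1=n_2$ and then $m_1=m_2$; so there is at most one solution. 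Either way the bound $\lesssim_\varepsilon N^\varepsilon$ holds.

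\textbf{Strategy for \eqref{eq:counting lem-1}.} Complete the square: $m^2+n^2+km+kn = (m+\tfrac{k}{2})^2+(n+\tfrac{k}{2})^2 - \tfrac{k^2}{2}$, so the equation becomes $(2m+k)^2+(2n+k)^2 = 2C+k^2 =: D$. Setting $u=2m+k$, $v=2n+k$ (integers of fixed parity), we must count lattice points on a circle of radius-squared $D$ with $|u|,|v|\lesssim N$. The total number of representations of $D$ as a sum of two squares is $r_2(D)\lesssim_\varepsilon D^\varepsilon$ by the classical formula (it is a divisor-type function of $D$). The only subtlety is again that $D=2C+k^2$ could be huge if $|k|$ is large, but the constraint $|u|,|v|\lesssim N$ forces $D=u^2+v^2\lesssim N^2$, so we only ever encounter $D\lesssim N^2$, and then $r_2(D)\lesssim_\varepsilon (N^2)^\varepsilon\lesssim_\varepsilon N^\varepsilon$. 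When $D<0$ or the parity is wrong there are no solutions. This gives \eqref{eq:counting lem-1}.

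\textbf{Main obstacle.} The genuine mathematical content is just the classical bounds $d(q)\lesssim_\varepsilon q^\varepsilon$ for the divisor function and $r_2(q)\lesssim_\varepsilon q^\varepsilon$ for sums of two squares; everything else is bookkeeping. The one place that requires a moment's care, rather than being purely routine, is handling the \emph{a priori unbounded} parameter $k$ (and $C$): one must observe that although $k$ is unbounded, the size constraints $|m|,|n|\lesssim N$ (resp. $|m-k|\lesssim N$, $|n|\lesssim N$) pin down the relevant arithmetic quantity ($D=2C+k^2$, resp. the product $C=mn$) to be of size $\lesssim N^{O(1)}$ whenever a solution exists, so the $N^\varepsilon$ divisor/$r_2$ bounds apply uniformly in $k$ and $C$. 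I would present this reduction explicitly, then invoke the classical estimates as a black box.
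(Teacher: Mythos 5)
Your treatment of \eqref{eq:counting lem-2} is essentially sound and runs parallel to the paper's argument: you split into a regime where the relevant arithmetic quantity is polynomially bounded in $N$ (so the classical divisor bound applies) and a degenerate regime where a rigidity argument shows there is at most one solution. The paper splits on the size of $|C|$ rather than $|k|$, but the content is the same, and your rigidity step ($n_1/n_2$ too close to $1$ forces $n_1=n_2$, hence $m_1=m_2$) is equivalent to the paper's computation $|C(n_1-n_2)|=|n_1n_2(m_1-m_2)|\lesssim N^3\ll |C|$.

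Your argument for \eqref{eq:counting lem-1}, however, has a genuine gap. After setting $u=2m+k$, $v=2n+k$, you claim the constraints force $|u|,|v|\lesssim N$ and hence $D=u^2+v^2\lesssim N^2$. This is false: the hypotheses give $|m|,|n|\le N$, i.e. $|u-k|,|v-k|\le 2N$, so $u,v$ lie in a window of size $O(N)$ centered at $k$, not at the origin, and $D$ can be of size comparable to $k^2$ with $k$ unbounded in terms of $N$ (take $m=n=0$, $C=0$, $k$ arbitrary). Consequently $r_2(D)\lesssim_\varepsilon D^\varepsilon$ only yields a bound of order $|k|^{O(\varepsilon)}$, which is not $\lesssim_\varepsilon N^\varepsilon$ uniformly in $k$; your concluding remark that the size constraints always pin $D$ down to $N^{O(1)}$ is precisely the point that fails here (it is true for \eqref{eq:counting lem-2} but not for \eqref{eq:counting lem-1}). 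The missing ingredient, which is how the paper handles this regime, is a separate argument for $|k|\gg N^{10}$: any two solutions $(m_1,n_1)$, $(m_2,n_2)$ satisfy $|k(m_1+n_1-m_2-n_2)|=|m_1^2+n_1^2-m_2^2-n_2^2|\lesssim N^2\ll|k|$, forcing $m_1+n_1=m_2+n_2$ and then $m_1^2+n_1^2=m_2^2+n_2^2$, so all solutions lie on the intersection of a fixed line with a fixed circle and number at most two. (One could instead invoke bounds for lattice points on short arcs, but some such additional argument is indispensable.) For $|k|\lesssim N^{10}$ your $r_2$ argument does work, since then $D\lesssim N^{20}$ and $(N^{20})^{\varepsilon'}\lesssim N^{\varepsilon}$ after relabeling. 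There is also a harmless arithmetic slip: completing the square gives $(2m+k)^2+(2n+k)^2=4C+2k^2$, not $2C+k^2$.
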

\begin{proof}[Proof of Lemma \ref{lem:counting lem}]
We first prove \eqref{eq:counting lem-1}. 
The equation in \eqref{eq:counting lem-1} implies 
$$     (2m+k)^2+(2n+k)^2=4C+2k^2.               $$
If $|k|\lea N^{10}$, then $(2m+k)^2+(2n+k)^2\lesssim N^{20}$ since $|m|,|n|\leq N$, then \eqref{eq:counting lem-1} follows from the standard arithmetic result that the number of lattice points on the circle $x^2+y^2=K$ is $O(K^\varepsilon)$.
Hence, we may assume $|k|\gg N^{10}$. For any two points $(m_1,n_1), (m_2, n_2)$ satisfying
$$ m_1^2+n_1^2+km_1+kn_1=m_2^2+n_2^2+km_2+kn_2=C,$$
we have 
$$ |k(m_1+n_1-m_2-n_2)|=|m_1^2+n_1^2- m_2^2-n_2^2  |\lea N^2 \ll |k|,$$
which implies $m_1+n_1-m_2-n_2=0$, and thus $m_1^2+n_1^2- m_2^2-n_2^2=0$.  
Since the intersection of a line and a circle consists of at most two points, we have 
$$ \sup_{k\in \bZ, |k|\gg N^{10}, C\in \bZ } \l|  \{(m,n)\in \bZ^2: |m|, |n|\le N, m^2+n^2+km+kn=C       \}                    \r|\le 2.$$

We now prove \eqref{eq:counting lem-2}. If $|C|\lea N^{10}$, then \eqref{eq:counting lem-2} holds by the divisor bound. On the other hand if $|C|\gg N^{10}$, then for any two points $(m_1,n_1), (m_2, n_2)$ satisfying
$$ m_1 n_1= m_2 n_2=C,$$
we have 
$$ |C(n_1-n_2)|=|n_1 n_2(m_1-m_2) |\lea N^3 \ll |C|,$$
which implies $n_1-n_2=m_1-m_2=0$.  This implies
$$ \sup_{k\in \bZ, C\in \bZ, |C|\gg N^{10} } \l|  \{(m,n)\in \bZ^2: m\ne 0, n\ne 0, |m-k|\lea N, |n|\lea N, mn= C       \}                   \r| \le 1.$$
This completes the proof.
\end{proof}

We are now ready to present the proof of Theorem \ref{mainthm: 2}.  
%We will adopt an approach similar to that in \cite{HK24, DGG17}, in order to exploit multilinear structures, 
The overarching strategy is to unfold the quartic $L^4$ functional to expose its multilinear structure, as is likewise done in \cite{DGG17, HK24}. 
%and we believe that the method employed in the following proofs constitutes, to some extent, a refinement of the bilinear estimate techniques developed in \cite{DPST07}. 
Additionally, following \cite{HTT14}, we split the argument into cases based on the direction of the vector $a$.

\begin{proof}[Proof of Theorem \ref{mainthm: 2}]
Without loss of generality, we may assume that $a\cdot\xi^0-c=0$. This is because we may enlarge $N$ to $N'$ which is at most $3N$, and $M$ to $M'$ which is at most $3M$,  such that 
\begin{align*}
&\{\xi\in\R\times\bZ: |\xi-\xi^0|\leq N, |a\cdot\xi-c|\leq M\}\\
\subset& \{\xi\in\R\times\bZ: |\xi-\widetilde{\xi^0}|\leq N', |a\cdot(\xi-\widetilde{\xi^0})|\leq M'\},
\end{align*}
where $\widetilde{\xi^0}$ is some element in the set $\{\xi\in\R\times\bZ: |\xi-\xi^0|\leq N, |a\cdot\xi-c|\leq M\}$.

Next, we apply the Galilean transform, which amounts to the change of variables $\xi\mapsto\xi-\w{\xi^0}=(\xi_1-\w{\omega^0},\xi_2-\w{k^0})$, to obtain
\begin{align*}
&\l\|\varphi(t)\int_{\bR \times \bZ} e^{ix_1\cdot \xi_1-it|\xi|^2} \widehat{\phi}(\xi) \ {\rm d}\xi\r\|_{L^4_{t,x_1}(\bR\times \bR)}\\
%=&\l\|\varphi(t)\int_{\bR \times \bZ} e^{ix_1\cdot (\xi_1+\omega^0)-it|\xi+\xi^0|^2} \widehat{\phi}(\xi+\xi^0)\ {\rm d}\xi\r\|_{L^4_{t,x_1}(\bR\times \bR)}\\
=&\l\|\varphi(t)e^{ix_1\cdot \w{\omega^0}-it|\w{\xi^0}|^2}\int_{\bR \times \bZ} e^{i(x_1-2t\w{\omega^0})\cdot \xi_1-it(|\xi|^2+2\w{k^0}\xi_2)} \widehat{\phi}(\xi+\w{\xi^0})\ {\rm d}\xi\r\|_{L^4_{t,x_1}(\bR\times \bR)}\\
=&\l\|\varphi(t)\int_{\bR \times \bZ} e^{ix_1\cdot \xi_1-it(|\xi|^2+2\w{k^0}\xi_2)} \widehat{\phi}(\xi+\w{\xi^0}) \ {\rm d}\xi\r\|_{L^4_{t,x_1}(\bR\times \bR)},
\end{align*}
where in the last equality we used the translation invariance of the $L^4_{x_1}(\R)$ norm. 
Thus it suffices to prove
$$\l\|\varphi(t)\int_{\bR \times \bZ} e^{ix_1\cdot \xi_1-it(|\xi|^2+k\xi_2)} \widehat{\phi}(\xi) \ {\rm d}\xi\r\|_{L^4_{t,x_1}(\bR\times \bR)} \lea \l(\frac{M}{N}\r)^{\delta} N^{\frac14}   \|\phi\|_{L^2},$$
where 
$$\text{supp}(\widehat{\phi})\subset \mathcal{R}=\{\xi=(\xi_1,\xi_2)\in \bR\times \bZ : |\xi|\le N,\ |a\cdot \xi|\le M  \},$$
uniformly in the parameters $a\in\bR^2$ with $|a|=1$, $c\in \R$, and $k\in\bZ$. 
Without loss of generality, we may also assume that $k\geq 0$ and $\|\phi\|_{L^2}=1$. 

We introduce the following notation: 
$$\vec{\xi}:=(\xi^{(1)},\xi^{(2)},\xi^{(3)},\xi^{(4)})\in (\R\times \bZ)^4,$$
$$\ {\rm d}\vec{\xi}:=\ {\rm d}\xi^{(1)}\ {\rm d}\xi^{(2)}\ {\rm d}\xi^{(3)}\ {\rm d}\xi^{(4)},$$
$$\widehat{\phi}(\vec{\xi}):=  \widehat{\phi}(\xi^{(1)}) \widehat{\phi}(\xi^{(3)}) \overline{\widehat{\phi}(\xi^{(2)}) \widehat{\phi}(\xi^{(4)}) },$$
and 
$$\langle F(\xi)\rangle:=F(\xi^{(1)})+F(\xi^{(3)})-F(\xi^{(2)})-F(\xi^{(4)}).$$
We now estimate 
\begin{align}
\l\|\varphi(t)\int_{\bR \times \bZ} e^{ix_1\cdot \xi_1-it(|\xi|^2+k \xi_2)} \widehat{\phi}(\xi) \ {\rm d} \xi \r\|_{L^4_{t,x_1}(\bR\times \bR)}^4 
\lea &    \l\| \varphi(t)^{\frac14}    \int_{\bR \times \bZ} e^{ix_1\cdot \xi_1-it(|\xi|^2+k \xi_2)} \widehat{\phi}(\xi) \ {\rm d} \xi  \r\|_{L^4_{t,x_1}(\bR\times \bR)}^4  \nonumber          \\
=&(2\pi)^2 \int_{\mathcal{R}^4} \delta_0(\langle\xi_1\rangle)
\widehat{\varphi}(\langle|\xi|^2 +k \xi_2\rangle)\widehat{\phi}(\vec{\xi}) \ {\rm d}\vec{\xi} \nonumber          \\
\lea & \int_{\mathcal{R}^4} \delta_0(\langle \xi_1 \rangle) \ca_{|\langle |\xi|^2 +k \xi_2 \rangle|\lea 1} |\widehat{\phi}(\vec{\xi})| \ {\rm d}\vec{\xi}. \label{eq:4linear}
\end{align}
We then split our argument according to the direction of the unit vector $a=(a_1,a_2)\in\R^2$.

{\bf Case 1.} $|a_2|\gea \l(\frac{M}{N} \r)^{1-{4\delta}}$. This case is readily tractable, as the directional constraint effectively yields a manageable restriction along an integer direction, enabling us to bound \eqref{eq:4linear} via measure estimates on $\mathbb{R} \times \mathbb{Z}$. We use
$$ |\widehat{\phi}(\vec{\xi})|\lea |\widehat{\phi}(\xi^{(1)}) \widehat{\phi}(\xi^{(3)})  |^2  +|\widehat{\phi}(\xi^{(2)}) \widehat{\phi}(\xi^{(4)})  |^2.$$
Then by symmetry in the variables $\xi^{(j)}$, it suffices to show
$$   \int_{\mathcal{R}^4} \delta_0(\langle \xi_1 \rangle) \ca_{|\langle |\xi|^2 +k \xi_2 \rangle|\lea 1} |\widehat{\phi}(\xi^{(2)}) \widehat{\phi}(\xi^{(4)})  |^2   \ {\rm d}\vec{\xi} \lea   \l(\frac{M}{N}\r)^{4\delta} N.   $$
This follows from
$$ \sup_{\xi^{(2)}, \xi^{(4)} \in \mathcal{R}}  \int_{\mathcal{R}^2} \delta_0(\langle \xi_1 \rangle) \ca_{|\langle |\xi|^2 +k \xi_2 \rangle|\lea 1}  \ {\rm d}\xi^{(1)} \ {\rm d}\xi^{(3)}   \lea \l(\frac{M}{N}\r)^{4\delta} N.   $$
Fix $\xi^{(2)}, \xi^{(4)} \in \mathcal{R}$. We make the linear change of variables 
\begin{align*}
    \left\{
    \begin{array}{lll}
        v&=&\xi_1^{(1)}- \xi_1^{(3)} , \\
        m&=&\xi_2^{(1)}+ \xi_2^{(3)},\\
        n&=&\xi_2^{(1)}- \xi_2^{(3)}. 
    \end{array}
    \right.
\end{align*}
The factor $\delta_0(\langle\xi_1\rangle)$ of the integrand gives
$$\xi_1^{(1)}+\xi_1^{(3)}=\xi_1^{(2)}+\xi_1^{(4)},$$
which implies 
$$a\cdot \langle\xi\rangle=a_2\langle\xi_2\rangle=a_2(m-\xi_2^{(2)}-\xi_2^{(4)}),$$
and 
$$(\xi_1^{(1)})^2+(\xi_1^{(3)})^2=\frac12 v^2+\frac{1}{2}(\xi_1^{(2)}+\xi_1^{(4)})^2,$$
so that 
$$\langle|\xi|^2+k\xi_2\rangle=\frac12 v^2+\frac{1}{2}(\xi_1^{(2)}+\xi_1^{(4)})^2+\frac{1}{2}(m^2+n^2)-(\xi^{(2)})^2-(\xi^{(4)})^2+km-k(\xi_2^{(2)}+\xi_2^{(4)}).$$
\iffalse 
$$|\xi^{(1)}|^2+|\xi^{(3)}|^2=\frac12 v^2+\frac{1}{2}(\xi_1^{(2)}+\xi_1^{(4)})^2+\frac{1}{2}(m^2+n^2),$$
\fi

Under the condition that 
$$\xi^{(j)}\in \mathcal{R},\ 1\le j \le 4,$$
we have  
$$| a_2(m-\xi_2^{(2)}-\xi_2^{(4)})|\leq 4 M,$$
which implies, under our assumption on $a_2$, that 
$$|m-\xi_2^{(2)}-\xi_2^{(4)}|\lea M^{4\delta} N^{1-{4\delta}}.$$  
Thus, the integral is bounded by
\begin{align*}
   & \int_{\mathcal{R}^2} \delta_0(\langle \xi_1 \rangle) \ca_{|\langle |\xi|^2 +k \xi_2 \rangle|\lea 1}\ {\rm d}\xi^{(1)} \ {\rm d}\xi^{(3)}\\
\lea & \sup_{C\in \bR}\l|  \l\{(v, m, n)\in \bR\times \bZ\times \bZ: |m-\xi_2^{(2)}-\xi_2^{(4)}|\lea M^{4\delta} N^{1-{4\delta}}, \l|\frac12 v^2+ \frac12(m^2+n^2)+km -C \r|\lea 1                \r\}      \r|\\
\lea & M^{4\delta} N^{1-{4\delta}} \sup_{C\in \bR} \l|  \l\{(v, n)\in \bR\times \bZ:  \l|\frac12 v^2+ \frac12 n^2 -C \r|\lea 1                \r\}      \r| \lea  \l( \frac{M}{N}\r)^{4\delta} N,
\end{align*}
where for the last inequality, we used Lemma \ref{lem:measure estimate lem}. 

{\bf Case 2}: $|a_2|\ll \l(\frac{M}{N} \r)^{1-{4\delta}}$. 
In this case we have $|a_1|\gea 1$, and thus
$$  \mathcal{R}\subset \mathcal{A}= \{\xi=(\xi_1,\xi_2)\in \bR\times \bZ : |\xi_1|\lea M^{1-{4\delta}} N^{4\delta},\ |\xi_2|\leq N  \}.$$ 
It suffices to obtain the desired estimate for the right-hand side of \eqref{eq:4linear} with $\mathcal{R}$ replaced by $\mathcal{A}$. 

By symmetry in the variables $\xi^{(j)}$, it suffices to prove 
$$\int_{\mathcal{A}^4} \delta_0(\langle \xi_1 \rangle) \ca_{\Gamma}(\vec{\xi})     |\widehat{\phi}(\vec{\xi})| \ {\rm d}\vec{\xi} \lea \l( \frac{M}{N}\r)^{4\delta} N, $$
where
$$ \Gamma=\{ \vec{\xi}\in \mathcal{A}^4: \xi_1^{(1)} \ge   \xi_1^{(3)}, \xi_1^{(2)} \ge   \xi_1^{(4)},  |\langle |\xi|^2 +k \xi_2 \rangle|\lea 1 \}.$$

Next, we \textit{decompose the kernel function} $\ca_{\Gamma}(\vec{\xi})$ into two components, bounding it from above by $K_1(\vec{\xi}) + K_2(\vec{\xi})$. We then control the quadrilinear form $|\widehat{\phi}(\vec{\xi})|$ using different arithmetic--geometric mean inequalities, and convert the resulting estimates into different measure estimates, analogous to Case 1. Define
\begin{align} \label{eq: K1}
K_1(\vec{\xi})=\ca_{\Gamma}(\vec{\xi}) \l(  \ca_{\xi_2^{(1)}= \xi_2^{(4)}}+ \ca_{\xi_2^{(3)}= \xi_2^{(2)}}+ \ca_{\xi_2^{(1)}+ \xi_2^{(4)}+k=0} +\ca_{\xi_2^{(3)}+ \xi_2^{(2)}+k=0}        \r) ,
\end{align}
and
$$    K_2(\vec{\xi})=\ca_{\Gamma}(\vec{\xi})   \ca_{\xi_2^{(1)}\ne \xi_2^{(4)}} \ca_{\xi_2^{(3)}\ne \xi_2^{(2)}} \ca_{\xi_2^{(1)} +\xi_2^{(4)}+k\ne 0} \ca_{\xi_2^{(3)}+ \xi_2^{(2)}+k\ne 0}        .            $$
Since 
\begin{align}\label{eq: K1 + K2}
    \ca_{\Gamma}(\vec{\xi}) \le  K_1(\vec{\xi})+K_2(\vec{\xi}),
\end{align}
it suffices  to prove
\begin{equation}\label{eq:main-case2-K_1}
\int_{\mathcal{A}^4} \delta_0(\langle \xi_1 \rangle) K_1(\vec{\xi})     |\widehat{\phi}(\vec{\xi})| \ {\rm d}\vec{\xi} \lea \l( \frac{M}{N}\r)^{4\delta} N,
\end{equation}
and
\begin{equation}\label{eq:main-case2-K_2}
  \int_{\mathcal{A}^4} \delta_0(\langle \xi_1 \rangle) K_2(\vec{\xi})     |\widehat{\phi}(\vec{\xi})| \ {\rm d}\vec{\xi} \lea \l( \frac{M}{N}\r)^{4\delta} N.
\end{equation}
For \eqref{eq:main-case2-K_1}, we use 
\begin{align} \label{eq: AM-GM 1}
|\widehat{\phi}(\vec{\xi})|\lea |\widehat{\phi}(\xi^{(1)}) \widehat{\phi}(\xi^{(3)})  |^2  +|\widehat{\phi}(\xi^{(2)}) \widehat{\phi}(\xi^{(4)})  |^2.
\end{align}
By symmetry in the variables $\xi^{(j)}$, to prove \eqref{eq:main-case2-K_1}, it suffices to show
$$     \int_{\mathcal{A}^4} \delta_0(\langle \xi_1 \rangle) K_1(\vec{\xi})   |\widehat{\phi}(\xi^{(2)}) \widehat{\phi}(\xi^{(4)})  |^2    \ {\rm d}\vec{\xi} \lea \l( \frac{M}{N}\r)^{4\delta} N.  $$
This follows from 
$$  \sup_{\xi^{(2)}, \xi^{(4)} \in \mathcal{A}}  \int_{\mathcal{A}^2} \delta_0(\langle \xi_1 \rangle)  K_1(\vec{\xi})  \ {\rm d}\xi^{(1)} \ {\rm d}\xi^{(3)}   \lea \l(\frac{M}{N}\r)^{4\delta} N. $$
Recall that $K_1(\vec{\xi})$ defined in \eqref{eq: K1} is a sum of four terms, and by symmetry it suffices to address the first and the third terms. Define $b=\xi_1^{(2)}+ \xi_1^{(4)}$.  
We need to prove
$$   \sup_{\xi^{(2)}, \xi^{(4)} \in \mathcal{A}}   \int_{\mathcal{A}^2}  \delta_0(\langle \xi_1 \rangle)  \ca_{\Gamma}(\vec{\xi})   \ca_{\xi_2^{(1)}= \xi_2^{(4)}}  \ {\rm d}\xi^{(1)} \ {\rm d}\xi^{(3)} \lea \l(\frac{M}{N}\r)^{4\delta} N,$$
and 
$$   \sup_{\xi^{(2)}, \xi^{(4)} \in \mathcal{A}}   \int_{\mathcal{A}^2}  \delta_0(\langle \xi_1 \rangle)  \ca_{\Gamma}(\vec{\xi})   \ca_{\xi_2^{(1)}+ \xi_2^{(4)}=k}  \ {\rm d}\xi^{(1)} \ {\rm d}\xi^{(3)} \lea \l(\frac{M}{N}\r)^{4\delta} N.  $$
The left-hand sides of the above two expression are both bounded by 
$$\sup_{b,C,k\in \bR} \l| \l\{ (\xi_1^{(1)}, \xi_2^{(3)} )\in \bR \times \bZ: \l||\xi_1^{(1)}|^2+ |b-\xi_1^{(1)}|^2 + |\xi_2^{(3)}|^2+k\xi_2^{(3)}+   C\r|\lea 1   \r\} \r| \lea 1,$$
where in the last inequality we used Lemma \ref{lem:measure estimate lem}. Observing that $ 1 \leq   \l( \frac{M}{N}\r)^{4\delta} N$, we complete the proof of \eqref{eq:main-case2-K_1}.

For \eqref{eq:main-case2-K_2}, we use 
\begin{align}\label{eq: AM-GM 2}
|\widehat{\phi}(\vec{\xi})|\lea |\widehat{\phi}(\xi^{(1)}) \widehat{\phi}(\xi^{(4)})  |^2  +|\widehat{\phi}(\xi^{(3)}) \widehat{\phi}(\xi^{(2)})  |^2 . 
\end{align}
By symmetry in the variables $\xi^{(j)}$, to prove \eqref{eq:main-case2-K_2}, it suffices to show
$$     \int_{\mathcal{A}^4} \delta_0(\langle \xi_1 \rangle) K_2(\vec{\xi}) |\widehat{\phi}(\xi^{(1)}) \widehat{\phi}(\xi^{(4)})  |^2        \ {\rm d}\vec{\xi} \lea \l( \frac{M}{N}\r)^{4\delta} N.               $$
This follows from 
\begin{equation}\label{eq:estimate K_2}
\sup_{\xi^{(1)}, \xi^{(4)} \in \mathcal{A}}  \int_{\mathcal{A}^2} \delta_0(\langle \xi_1 \rangle)  K_2(\vec{\xi})  \ {\rm d}\xi^{(2)} \ {\rm d}\xi^{(3)}   \lea \l(\frac{M}{N}\r)^{4\delta} N.
\end{equation}
Due to the factor $\delta_0(\langle \xi_1 \rangle)$, we may assume 
$\xi_1^{(1)}+\xi_1^{(3)}=\xi_1^{(2)}+\xi_1^{(4)}$. 
We make the affine change of variables 
\begin{align*}
    \left\{
    \begin{array}{lll}
        x&=&  \xi_1^{(1)}-\xi_1^{(3)}-\xi_1^{(2)}+\xi_1^{(4)}, \\
       m&=&\xi_2^{(3)}+\xi_2^{(2)}+k,     \\
        n&=& \xi_2^{(3)}-\xi_2^{(2)}.
    \end{array}\right.
\end{align*}
Define $l= \xi_1^{(1)}- \xi_1^{(4)} $ and $C= (\xi_2^{(1)})^2-(\xi_2^{(4)})^2+k(\xi_2^{(1)}-\xi_2^{(4)} )  $. In particular, $\xi_1^{(2)}- \xi_1^{(3)}=l$. 
Observe that under the condition $\vec{\xi}\in \Gamma$, 
we have $$|x|\leq 2l,$$
as well as 
$$0 \le l\lea M^{1-{4\delta}} N^{{4\delta}}.$$
Also observe that 
$$\langle|\xi|^2+k\xi_2\rangle=l x + mn  +C.$$
Now \eqref{eq:estimate K_2} reduces to the measure estimate 
\begin{align*}
   \l| \l\{ (x,m,n)\in \bR\times \bZ \times \bZ: |x|\le 2l, m\ne 0, n \ne 0, |m-k|\lea N, |n|\lea N,  \l| l x + mn  +C  \r|\lea 1         \r\}        \r| 
   &\lea   \l( \frac{M}{N}\r)^{4\delta} N. 
\end{align*} 
We prove this estimate in Lemma \ref{lem: main counting} as follows. This completes the proof of Theorem \ref{mainthm: 2}.
\end{proof}

\begin{lemma}\label{lem: main counting}
Fix $\delta\in (0,\frac18)$. 
Define 
\begin{align*}
   \mathcal{B}:&= \l\{ (x,m,n)\in \bR\times \bZ \times \bZ: |x|\le 2l, m\ne 0, n \ne 0, |m-k|\lea N, |n|\lea N,  \l| l x + mn  +C  \r|\lea 1         \r\}        . 
\end{align*} 
Then $|\mathcal{B}|\lea   \l( \frac{M}{N}\r)^{4\delta} N$, uniformly in $l\in\R$ with $0\leq l\lesssim M^{1-4\delta}N^{4\delta}$, $k\in\bZ$, $C\in\R$, and 
$1\le M\leq N$.
\end{lemma}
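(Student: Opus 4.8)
The plan is to bound $|\mathcal{B}|$ by fixing the integer pair $(m,n)$ and then measuring the set of admissible real $x$, summing the resulting contributions over $(m,n)$. Concretely, for fixed $(m,n)$ satisfying $m\neq 0$, $n\neq 0$, $|m-k|\lesssim N$, $|n|\lesssim N$, the condition $|lx+mn+C|\lesssim 1$ constrains $x$ to lie in an interval of length $\lesssim 1/l$ (since $l\geq 1$), intersected with $\{|x|\leq 2l\}$; thus the $x$-measure for each fixed $(m,n)$ is $\lesssim 1/l$. Hence
\begin{equation*}
|\mathcal{B}|\lesssim \frac{1}{l}\cdot\#\{(m,n)\in\bZ^2: m\neq 0,\ n\neq 0,\ |m-k|\lesssim N,\ |n|\lesssim N,\ \exists\, x \text{ with } |x|\le 2l,\ |lx+mn+C|\lesssim 1\}.
\end{equation*}
The existence of such an $x$ forces $mn$ to lie in an interval of length $\lesssim l$ (namely $|mn+C|\lesssim l+1\lesssim l$), so the relevant count is $\#\{(m,n): m\neq 0, n\neq 0, |m-k|\lesssim N, |n|\lesssim N, |mn+C|\lesssim l\}$.

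Next I would dyadically split the interval of length $\lesssim l$ into $\lesssim l$ unit subintervals, or rather into $O(\log)$ blocks — but the cleanest route is: for each integer value $v$ with $|v+C|\lesssim l$, the count of $(m,n)$ with $mn=v$, $m\neq 0$, $n\neq 0$, $|m-k|\lesssim N$, $|n|\lesssim N$ is $\lesssim_\varepsilon N^\varepsilon$ by \eqref{eq:counting lem-2} of Lemma \ref{lem:counting lem}. There are $\lesssim l$ such integer values $v$, so the total count is $\lesssim_\varepsilon l\, N^\varepsilon$. Plugging back,
\begin{equation*}
|\mathcal{B}|\lesssim \frac{1}{l}\cdot l\, N^\varepsilon = N^\varepsilon.
\end{equation*}
Choosing $\varepsilon$ small (say $\varepsilon = 4\delta$, which is legitimate since $\delta\in(0,\tfrac18)$ is fixed), this gives $|\mathcal{B}|\lesssim N^{4\delta}\leq (M/N)^{4\delta}N^{8\delta}$ — which is \emph{not quite} what we want; we must instead use $N^\varepsilon\leq (M/N)^{4\delta}N$ whenever $\varepsilon\leq 1$, since $(M/N)^{4\delta}N\geq N^{1-4\delta}\geq N^\varepsilon$ for $\varepsilon$ small and $M\geq 1$. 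So taking $\varepsilon$ a small absolute constant (e.g. $\varepsilon=\tfrac12$) already yields $|\mathcal{B}|\lesssim N^{1/2}\leq (M/N)^{4\delta}N$, using $M\geq 1$ and $\delta<\tfrac18$. This is uniform in $l, k, C$ and in $1\le M\le N$, as required.

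The main subtlety — and the step to state carefully — is the passage from the real variable $x$ to the integer constraint on $mn$: one must verify that whenever the triple lies in $\mathcal{B}$ the quantity $mn$ is pinned to an $O(l)$-length window of integers (which is immediate from $l\geq 1$ and $|lx+mn+C|\lesssim 1$, $|x|\le 2l$ giving $|mn+C|\lesssim l$), and conversely that for each such integer value the $x$-fiber has measure $\lesssim 1/l$. A secondary point is that the divisor-type bound \eqref{eq:counting lem-2} is applied with its parameter set to the fixed integer $v=mn$ and requires $m\neq 0\neq n$, which is exactly encoded in the definition of $\mathcal{B}$; there is no circularity since \eqref{eq:counting lem-2} has already been proved. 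No genuinely hard obstacle remains; the only care needed is bookkeeping the exponents so that the clean bound $N^\varepsilon$ (any small $\varepsilon$) is correctly absorbed into $(M/N)^{4\delta}N$ using $M\ge 1$ and $\delta<\tfrac18$.
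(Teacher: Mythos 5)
Your overall strategy (freeze $(m,n)$, note the $x$-fiber has measure $\lesssim 1/l$, then count the pairs $(m,n)$ via the divisor-type bound \eqref{eq:counting lem-2} of Lemma \ref{lem:counting lem}) is the same as the paper's for small and moderate $l$, but there is a genuine gap caused by an arithmetic slip. Since the constraint in $\mathcal{B}$ is $|x|\le 2l$ and the coefficient of $x$ is $l$, the existence of an admissible $x$ only forces $|mn+C|\le 2l^{2}+O(1)\lesssim l^{2}$, not $|mn+C|\lesssim l+1$ as you assert. Consequently the number of admissible integer values of $mn$ is $\lesssim l^{2}$, and your count becomes $\lesssim_{\varepsilon} l^{2}N^{\varepsilon}$, so the fiber argument yields $|\mathcal{B}|\lesssim_{\varepsilon} lN^{\varepsilon}$ rather than $N^{\varepsilon}$.

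The corrected bound $lN^{\varepsilon}$ suffices only when $l\lesssim N^{1/2}$ (choose $\varepsilon<\tfrac12-4\delta$; this is exactly the paper's Cases a) and b)). It fails at the top of the admissible range: $l$ may be as large as $M^{1-4\delta}N^{4\delta}$, and taking $M\sim N$, $l\sim N$ gives $lN^{\varepsilon}\sim N^{1+\varepsilon}$, which exceeds the target $(M/N)^{4\delta}N\sim N$ for every fixed $\varepsilon>0$. For the regime $N^{1/2}\ll l\lesssim M^{1-4\delta}N^{4\delta}$ the paper abandons the per-value divisor bound (which loses $N^{\varepsilon}$ over $\sim l^{2}$ values) and instead counts lattice points under the hyperbola $|mn+C|\lesssim l^{2}$ directly, summing $\min\{l^{2}/|m|+1,|m|\}$ over $m$ and the analogous sum over $n$ (with the observation $m\ge |m-k|$ when $|k|\gg N$ so the sum can be controlled in terms of $|m-k|$), obtaining $\lesssim l^{2}\max\{1,\log(N/l)\}$ pairs and hence $|\mathcal{B}|\lesssim l\max\{1,\log(N/l)\}\lesssim (M/N)^{4\delta}N$, using $l\lesssim M^{1-4\delta}N^{4\delta}$, $M\le N$ and $4\delta<\tfrac12$. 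Without this third regime your argument does not cover all $l$ allowed by the lemma, so the proof as written is incomplete.
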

\begin{proof}

{\bf Case a)}: $l\lea 1$. Note that 
$$  \l| l x + mn+C  \r|\lea 1  \Longrightarrow \l|mn+C  \r|\lea 1.$$
Choose $\varepsilon\in (0, \frac12)$. Using Lemma \ref{lem:counting lem}, we have
$$ |\mathcal{B}|\lea l \cdot \l| \{  (m,n)\in\bZ^2: \l|mn+C  \r|\lea 1, m\ne 0, n \ne 0, |m-k|\lea N, |n|\lea N \} \r| \lea N^\varepsilon \lea \l( \frac{M}{N}\r)^{4\delta} N.$$

{\bf Case b)}: $1\ll l\lea N^{\frac12}$. Note that  
$$  \l| l x + mn+C  \r|\lea 1  \Longrightarrow \l|mn+C  \r|\lea l^2.$$
Also note that 
$$|\{x\in\R: \l| l x + mn+C  \r|\lea 1\}|\lesssim \frac1l.$$
Choose $\varepsilon \in (0, \frac12-4\delta)$. By Lemma \ref{lem:counting lem} again,
$$ 
\l| \{  (m,n)\in\bZ^2: \l|mn+C  \r|\lea l^2, m\ne 0, n \ne 0, |m-k|\lea N, |n|\lea N  \} \r| \lea  l^2 N^\varepsilon,
$$
thus
\begin{align*}
  |\mathcal{B}| &\lea \frac{1}{l} \cdot \l| \{  (m,n)\in\bZ^2: \l|mn+C  \r|\lea l^2, m\ne 0, n \ne 0, |m-k|\lea N, |n|\lea N  \} \r| \\
      &\lea l N^\varepsilon \lea N^{\frac12+\varepsilon} \lea \l( \frac{M}{N}\r)^{4\delta} N.
\end{align*}

{\bf Case c)}: $N^{\frac12}\ll l\lea M^{1-{4\delta}} N^{{4\delta}}$.
Then $l^2\gg N$.  Note that
$$  \l| l x + mn+C  \r|\lea 1  \Longrightarrow \l|mn+C  \r|\lea l^2.$$
We employ an alternative approach to estimating
$$  \l| \{  (m,n)\in\bZ^2: \l|mn+C  \r|\lea l^2, m\ne 0, n \ne 0, |m-k|\lea N, |n|\lea N  \} \r|.  $$
First, assume that $k\lea N$. Then 
\begin{align*}
  & \l| \{  (m,n)\in\bZ^2: \l|mn+C  \r|\lea l^2, m\ne 0, n \ne 0, |m-k|\lea N, |n|\lea N  \} \r| \\
\lea & \l| \{  (m,n)\in\bZ^2: \l|mn+C  \r|\lea l^2, m\ne 0, n \ne 0, |m|\lea N, |n|\lea N  \} \r|.
\end{align*}
By considering the cases $|m|\ge |n|$ and $|m| < |n|$ separately, we obtain
\begin{align*}
  & \l| \{  (m,n)\in\bZ^2: \l|mn+C  \r|\lea l^2, m\ne 0, n \ne 0, |m|\lea N, |n|\lea N  \} \r|\\
\lea & \sum_{1\le |m| \lea N } \min\l\{ \frac{l^2}{|m|}+1, |m|      \r\} + \sum_{1\le |n| \lea N } \min\l\{ \frac{l^2}{|n|}+1, |n|      \r\} \\
\lea & l^2\max\l\{1,\log\l(\frac{N}{l} \r)\r\}.
\end{align*}
The remaining case is when $k\gg N$. Then $|m-k|\lea N \ll k$ implies $m\ge |m-k|$. By considering the cases $|m-k|\ge |n|$ and $|m-k| < |n|$ separately, we obtain
\begin{align*}
 & \l| \{  (m,n)\in\bZ^2: \l|mn+C  \r|\lea l^2, m\ne 0, n \ne 0, |m-k|\lea N, |n|\lea N  \} \r|\\
\lea & \sum_{1\le |m-k| \lea N } \min\l\{ \frac{l^2}{m}+1, |m-k|      \r\} + \sum_{1\le |n| \lea N } \min\l\{ \frac{l^2}{|n|}+1, |n|      \r\} \\
\lea & \sum_{1\le |m-k| \lea N } \min\l\{ \frac{l^2}{|m-k|}+1, |m-k|      \r\} + \sum_{1\le |n| \lea N } \min\l\{ \frac{l^2}{|n|}+1, |n|      \r\} \\
\lea & l^2 \max\l\{1,\log\l(\frac{N}{l} \r)\r\}. 
\end{align*}

To conclude, we have
\begin{align*}
  |\mathcal{B}| &\lea \frac{1}{l} \cdot \l| \{  (m,n): \l|mn+C  \r|\lea l^2, m\ne 0, n \ne 0, |m-k|\lea N, |n|\lea N  \} \r| \\
      &\lea l \max\l\{1,\log\l(\frac{N}{l} \r)\r\}\\
      &\lea  \l( \frac{M}{N}\r)^{4\delta} N,
\end{align*}
where in the last inequality, we used ${4\delta} \in (0, \frac{1}{2}) $ and $l\lea M^{1-{4\delta}} N^{{4\delta}}$.   This finishes the proof.    
\end{proof}

\begin{remark}\label{rmk 5.1}
In the proof of Case 2 for Theorem \ref{mainthm: 2}, it is pivotal that we decompose the kernel function into $K_1$ and $K_2$ as in \eqref{eq: K1 + K2}, which allows us to leverage different arithmetic--geometric mean inequalities as in \eqref{eq: AM-GM 1} and \eqref{eq: AM-GM 2}. Previous approaches such as in \cite{TT01, DPST07, HTT14, DFYZZ24}, have been essentially using \eqref{eq: AM-GM 1} only. This technique is 
robust and likely useful for addressing other multilinear-type estimates. In particular, it can be applied to establish the \textit{sharp} $L^4$-Strichartz estimate for the hyperbolic Schr\"odinger equation on $\mathbb{R} \times \mathbb{T}$, 
\begin{equation}
\|e^{it(\partial_{xx}-\partial_{yy})}f\|_{L^4_{t,x,y}([0,1]\times \mathbb{R}\times\T)} \lesssim \|f\|_{L^2(\mathbb{R}\times\T)}\footnote{Motivated by \cite{Bar21,MR4219972}, a global-in-time version may also hold due to dispersion in the $\mathbb{R}$ direction.},
\end{equation}
which extends the classical work of Takaoka and Tzvetkov \cite{TT01} beyond the elliptic case.
This estimate yields well-posedness in the $L^2$-critical space for the cubic hyperbolic NLS on $\mathbb{R} \times \mathbb{T}$, and also has applications to related models such as the (hyperbolic--elliptic) Davey–Stewartson system, the KP-II equation, and the gravity
water waves. A detailed treatment will appear in a forthcoming work \cite{next}.

\end{remark}

\section{Refined bilinear Strichartz estimate and well-posedness of the energy-critical NLS 
%on $\bR\times\mathbb{S}^3$
}\label{bilinear and WP}
In this section, we derive the refined bilinear Strichartz estimate on $\mathbb{R} \times \mathbb{S}^3$ stated in Theorem \ref{mainthm: 3} from Theorems \ref{mainthm: 1} and \ref{mainthm: 2}, and then, as a standard corollary, we deduce the well-posedness theory in Theorem \ref{mainthm: 4}. In particular, to make Theorem \ref{mainthm: 2} applicable, we proceed in two steps: first, we employ the spatial and temporal almost orthogonality argument as in \cite{HTT11}; second, following \cite{Her13,HS15}, we apply the Plancherel identity on $\mathbb{R}_t \times \mathbb{R}_x$ before invoking the bilinear eigenfunction estimate on $\mathbb{S}^3$.

\subsection{Bilinear Strichartz estimate}

\begin{proof}[Proof of Theorem \ref{mainthm: 3}]
  \begin{comment}
     Let $\varphi(t)\in S(\R)$ such that: (1) $\widehat{\varphi}(\tau)\geq 0$ for all $\tau\in\R$; (2) The support of $\widehat{\varphi}$ lies in $[-1,1]$; (3)  $\varphi(t)\ge 0$ for $t\in \bR$ and $ \varphi(t)\geq  1$ for $t\in[0,1]$.  
  \end{comment}  
  
  It suffices to prove 
    \begin{align*}
        \|\varphi^2(t)e^{it\Delta}P_{N_1}f\cdot e^{it\Delta}P_{N_2}g\|_{L^2(\R\times\R\times\bS^3)}\lesssim N_2\left(\frac{N_2}{N_1}+\frac{1}{N_2}\right)^\delta\|f\|_{L^2(\R\times\bS^3)}\|g\|_{L^2(\R\times\bS^3)}.
    \end{align*}
    As mentioned in Section \ref{pre}, to ease notations, we add 1 to the spectra of $\Delta$, which amounts to redefining the Laplace--Beltrami operator on $\R\times \bS^3$ as  
    $$\Delta=\Delta_{\R}+\Delta_{\bS^3}-\mathrm{Id}.$$
    It suffices to prove the above estimate for this new $\Delta$. 
    Now we follow a strategy as in \cite{HTT11} and later followed by \cite{HTT14, Her13, HS15}, which explores almost orthogonality in both spatial and temporal directions. 
    Let us first assume that $N_2\ll N_1$. 
    We first perform a spectral localization which will pertain to the spatial almost orthogonality.  Partition $\R\times\bZ$ into a collection of disjoint cubes $C$ of side length $N_2$, so that 
    \begin{align}\label{eq: first localization}
        P_{N_1}f=\sum_C P_{C}P_{N_1}f.
    \end{align}
    It suffices to consider those $C$ such that 
    \begin{align}\label{eq: ring}
        C\cap \{(\omega,k+1)\in\R\times\bZ_{\geq 1}: \frac{N^4_1}{4}\leq (k+1)^2+\omega^2-1\leq 4N^2_1\}\neq \varnothing.
    \end{align}   
    
    By Lemma \ref{lem: fg spec supp}, $P_CP_{N_1}f\cdot P_{N_2}g$ is spectrally supported in $C+[-2N_2,2N_2]^2$. This implies that $P_CP_{N_1}f\cdot P_{N_2}g$ are an almost orthogonal family in $L^2(\R\times\bS^3)$ over the $C$'s. 
    Thus it suffices to prove 
    \begin{align}\label{eq: with varphi(t) and C}
        \|\varphi^2(t)e^{it\Delta}P_{C}P_{N_1}f\cdot e^{it\Delta}P_{N_2}g\|_{L^2(\R\times\R\times\bS^3)}\lesssim N_2\left(\frac{N_2}{N_1}+\frac{1}{N_2}\right)^\delta\|f\|_{L^2(\R\times\bS^3)}\|g\|_{L^2(\R\times\bS^3)},
    \end{align}
    uniformly over $C$. 
    
   We perform the second spectral localization which will pertain to the temporal almost orthogonality. Let 
   $$M=\max\left\{\frac{N_2^2}{N_1},1\right\}.$$
   Then $M\leq N_2$. 
   We partition $C$ into slabs. Let $\xi^0$ denote the center of $C$. Because of \eqref{eq: ring} and  $N_2\ll N_1$, we have 
   $$|\xi^0|\sim N_1.$$
   Let $a=\xi^0/|\xi^0|$. Write 
   \begin{align}\label{eq: second localization}
       P_C P_{N_1}f=\sum_{\mathcal{R}} P_\mathcal{R}f,
   \end{align}   
   where each $\mathcal{R}$ is of the form 
   %$$\mathcal{R}=\{\xi=(\omega_1,k_1)\in C: |(\omega_1,k_1)\cdot (\omega^0,k^0)|/|(\omega^0,k^0)|\in[(m-1)M,(m+1)M) \},$$
   %in which $(\omega^0,k^0)$ is the center of $C$ as before, and
   \begin{align}\label{eq: slab}
          \mathcal{R}=\{\xi\in C: |a\cdot\xi-c|\leq M\},
   \end{align}  
   in which $c\in 2M\cdot\mathbb{Z}$. 
    Again, because of \eqref{eq: ring} and  $N_2\ll N_1$, 
   it follows that $|c|\sim N_1$. 
   The temporal frequency of $e^{it\Delta}P_\mathcal{R}f$ corresponding to the spectral parameter $\xi\in \mathcal{R}$, is 
    \begin{align*}
        -|\xi|^2&=-(\xi\cdot a)^2-|\xi-(\xi\cdot a)a|^2\\
        &=-c^2-(\xi\cdot a-c)^2-2c(\xi\cdot a-c)-|\xi-(\xi\cdot a)a|^2\\
        &=-c^2+  O(M^2+cM+N_2^2).
    \end{align*}
Since $|c|\sim N_1\gg N_2\geq M$, and $N_2^2\lesssim N_1M\sim |c|M$, 
   we have 
   $$-|\xi|^2=-c^2+  O(cM).$$
    Now that the temporal frequency of $e^{it\Delta}P_{N_2}g$ is supported in $[-4N_2^2-1,4N^2_2+1]$, and that the frequency of $\varphi^2(t)$ is supported in $[-2,2]$, we conclude that the temporal frequency of the product $\varphi^2(t)e^{it\Delta}P_{\mathcal{R}}f\cdot e^{it\Delta}P_{N_2}g$ is still
    $$-c^2+ O(cM).$$
    This implies that $\varphi^2(t)e^{it\Delta}P_{\mathcal{R}}f\cdot e^{it\Delta}P_{N_2}g$ are an almost orthogonal family in $L^2_t(\R)$ over the slabs $\mathcal{R}$, as $c$ ranges in $2M\cdot\bZ$ with $|c|\gg M$. This further reduces \eqref{eq: with varphi(t) and C} to 
     \begin{align}\label{eq: with R}
        \|\varphi^2(t)e^{it\Delta}P_{\mathcal{R}}f\cdot e^{it\Delta}P_{N_2}g\|_{L^2(\R\times\R\times\bS^3)}\lesssim N_2\left(\frac{N_2}{N_1}+\frac{1}{N_2}\right)^\delta\|f\|_{L^2(\R\times\bS^3)}\|g\|_{L^2(\R\times\bS^3)},
    \end{align}
    uniformly over $\mathcal{R}$. 

Let $\rho(t)=\varphi^2(t)$. Then $\widehat{\rho}=\widehat{\varphi}*\widehat{\varphi}\geq 0$. 
Now we may write for $(t,x,y)\in\R\times\R\times\bS^3$, that 
\begin{align*}
&\varphi^2(t)e^{it\Delta}P_{\mathcal{R}}f(x,y)\cdot e^{it\Delta}P_{N_2}g(x,y)\\
=&\int_\R \widehat{\rho}(\tau)e^{it\tau}\ \l( \int_{\R\times\bZ_{\geq 0}} f_{\omega_1,k_1}(y)e^{ix\cdot \omega_1-it(\omega_1^2+(k_1+1)^2)}\ {\rm d}\omega_1\ {\rm d}k_1\r) \\
& \cdot\l(\int_{\R\times\bZ_{\geq 0}} g_{\omega_2,k_2}(y)e^{ix\cdot \omega_2-it(\omega_2^2+(k_2+1)^2)}\ {\rm d}\omega_2\ {\rm d}k_2\r) d\tau,
\end{align*}
where $f_{\omega_1,k_1}=0$ if $(\omega_1,k_1+1)\notin \mathcal{R}$, and $g_{\omega_2,k_2}=0$ if $\omega^2_2+(k_2+1)^2>4N_2^2+1$. In particular, we may assume 
$|k_2|\lesssim N_2$. 
Continuing, we have 
$$  \varphi^2(t)e^{it\Delta}P_{\mathcal{R}}f\cdot e^{it\Delta}P_{N_2}g=   \int_\R\int_\R F(\tau',\omega,y) e^{it\tau'+ix\omega}\ d\tau'\ {\rm d}\omega,                    $$
where 
$$F(\tau',\omega,y)=\int_\R\sum_{k_1=0}^\infty\sum_{k_2=0}^\infty\widehat{\rho}(\tau'+\omega_1^2+(k_1+1)^2+|\omega-\omega_1|^2+(k_2+1)^2)f_{\omega_1,k_1}(y)g_{\omega-\omega_1,k_2}(y)\ {\rm d}\omega_1.$$
As $f_{\omega,k_1}$ and $g_{\omega-\omega_1,k_2}$ are eigenfunctions of the Laplace--Beltrami operator on $\bS^3$ with eigenvalues $-(k_1+1)^2+1$ and $-(k_2+1)^2+1$ respectively, we may apply Theorem \ref{mainthm: 1} to get 
\begin{align*}
    \|f_{\omega_1,k_1}g_{\omega-\omega_1,k_2}\|_{L^2(\bS^3)}
&\lesssim \min\{k_1,k_2\}^{\frac12}\|f_{\omega_1,k_1}\|_{L^2(\bS^3)}\|g_{\omega-\omega_1,k_2}\|_{L^2(\bS^3)}\\
&\leq k_2^{\frac12}\|f_{\omega_1,k_1}\|_{L^2(\bS^3)}\|g_{\omega-\omega_1,k_2}\|_{L^2(\bS^3)}\\
&\lesssim N_2^{\frac12}\|f_{\omega_1,k_1}\|_{L^2(\bS^3)}\|g_{\omega-\omega_1,k_2}\|_{L^2(\bS^3)}. 
\end{align*}
An application of Minkowski's inequality then yields
\begin{align*}
 &\|F(\tau',\omega,y)\|_{L^2_y(\bS^3)}\\
\lesssim &N_2^{\frac12}
\int_\R\sum_{k_1}\sum_{k_2}\widehat{\rho}(\tau'+\omega_1^2+(k_1+1)^2+|\omega-\omega_1|^2+(k_2+1)^2)\|f_{\omega_1,k_1}\|_{L^2(\bS^3)}\|g_{\omega-\omega_1,k_2}\|_{L^2(\bS^3)}\ {\rm d}\omega.   
\end{align*}
Then by the Plancherel identity for $\R^2$, we have 
\begin{align}
 &\|\varphi^2(t)e^{it\Delta}P_{\mathcal{R}}f\cdot e^{it\Delta}P_{N_2}g\|_{L^2(\R\times\R\times\bS^3)} \\
 %=&\l\|\|F(\tau',\omega,y)\|_{L^2_{\tau',\omega}}\r\|_{L^2_y(\bS^3)}\nonumber\\
 =&\l\|\|F(\tau',\omega,y)\|_{L^2_y(\bS^3)}\r\|_{L^2_{\tau',\omega}}\nonumber\\
%=& \l\|\|\varphi^2(t)e^{it\Delta}P_{R}f\cdot e^{it\Delta}P_{N_2}g\|_{L^2(\R\times\R)}\r\|_{L^2(\bS^3)}\nonumber \\
%=&\l\|\|F(\tau',\omega,y)\|_{L^2_{\tau',\omega}}\r\|_{L^2_y(\bS^3)}\nonumber \\
\lesssim& N_2^{\frac12}
\l\|\int_\R\sum_{k_1}\sum_{k_2}\widehat{\rho}(\tau'+\omega_1^2+(k_1+1)^2+|\omega-\omega_1|^2+(k_2+1)^2)\|f_{\omega_1,k_1}\|_{L^2(\bS^3)}\|g_{\omega-\omega_1,k_2}\|_{L^2(\bS^3)}\ {\rm d}\omega\r\|_{L^2_{\tau',\omega}}.\nonumber 
\end{align}
Using the Plancherel identity for $\R^2$ again, and applying H\"older's inequality, we further bound the above by 
\begin{align}
N_2^{\frac12}&\l\|\rho(t) \l(\int_\R\sum_{k_1} \|f_{\omega_1,k_1}\|_{L^2(\bS^3)}e^{ix\cdot \omega_1-it(\omega_1^2+(k_1+1)^2)}\ {\rm d}\omega_1 \r)\r. \\
&\ \ \cdot \l.\l(
\int_\R\sum_{k_2} \|g_{\omega_2,k_2}\|_{L^2(\bS^3)}e^{ix\cdot \omega_2-it(\omega_2^2+(k_2+1)^2)}\ {\rm d}\omega_2 \r)\r\|_{L^2_{t,x}}\nonumber \\
\lesssim N_2^{\frac12}&\l\|\varphi(t)\int_\R\sum_{k_1} \|f_{\omega_1,k_1}\|_{L^2(\bS^3)}e^{ix\cdot \omega_1-it(\omega_1^2+(k_1+1)^2)}\ {\rm d}\omega_1\r\|_{L^4_{t,x}} \label{eq: y removed}\\
\cdot &\l\|\varphi(t)
\int_\R\sum_{k_2} \|g_{\omega_2,k_2}\|_{L^2(\bS^3)}e^{ix\cdot \omega_2-it(\omega_2^2+(k_2+1)^2)}\ {\rm d}\omega_2\r\|_{L^4_{t,x}}. 
\end{align}
For the first $L^4_{t,x}$ norm above, recall that 
$f_{\omega_1,k_1}=0$ unless $(\omega_1,k_1+1)$ lies in the slab $\mathcal{R}$ defined in \eqref{eq: slab}.  We have 
$$\mathcal{R}\subset \{\xi=(\omega,k+1)\in\R\times\bZ: |\xi-\xi^0|\leq N_2, |a\cdot\xi-c|\leq M\}.$$
%Recall $|\xi^0|\sim N_1$, Also observe that $|k^0|\lesssim N_1\leq \frac{N_2^2}{M}$, whenever $M> 1$. 
Apply Theorem \ref{mainthm: 2}, we have for any $\delta\in (0,\frac{1}{8})$, 
\begin{align}
\l\|\varphi(t)\int_\R\sum_{k_1} \|f_{\omega_1,k_1}\|_{L^2(\bS^3)}e^{ix \omega_1-it(\omega_1^2+(k_1+1)^2)}\ {\rm d}\omega_1\r\|_{L^4_{t,x}}\nonumber 
    &\lesssim \left(\frac{M}{N_2}\right)^\delta N_2^\frac{1}{4}\l\|\|f_{\omega_1,k_1}\|_{L^2(\bS^3)}\r\|_{L^2_{\omega_1,k_1}} \nonumber\\
    &\lesssim \left(\frac{N_2}{N_1}+\frac{1}{N_2}\right)^\delta N_2^\frac{1}{4}\|f\|_{L^2(\bR\times\mathbb{S}^3)}. \label{eq: res f}
\end{align}

For the other $L^4_{t,x}$ norm, recall that $g_{\omega_2,k_2}=0$ whenever $\omega^2_2+(k_2+1)^2>4N_2^2+1$. Then 
we may estimate it via the anisotropic Strichartz estimate \eqref{eq: res} on $\R\times \T$: 
\begin{align}
    \l\|\varphi(t)
\int_\R\sum_{k_2} \|g_{\omega_2,k_2}\|_{L^2(\bS^3)}e^{ix\cdot \omega_2-it(\omega_2^2+(k_2+1)^2)}\ {\rm d}\omega_2\r\|_{L^4_{t,x}}
\lesssim& N_2^\frac{1}{4}\l\|\|g_{\omega_2,k_2}\|_{L^2(\bS^3)}\r\|_{L^2_{\omega_2,k_2}}  \nonumber\\
=&N_2^\frac{1}{4}\|g\|_{L^2(\R\times\bS^3)}. \label{eq: res g}
\end{align}

Combine \eqref{eq: y removed}, \eqref{eq: res g} and \eqref{eq: res f}, we finish the proof, at least for the case $N_2\ll N_1$. 
\iffalse 
the desired bilinear Strichartz estimate 
is a consequence of the $L^4$-Strichartz estimate on $\R\times\bS^3$ 
$$\|\varphi^2(t) P_{N}e^{it\Delta}f\|_{L^4(\R\times\R\times \bS^3)}\lesssim N^{\frac12}\|f\|_{L^2(\R\times\bS^3)}.$$
\fi  
To prove the case $N_2\sim N_1$, the two spectral localizations as in \eqref{eq: first localization} and \eqref{eq: second localization} are not needed. It suffices to follow the rest of the argument in the above proof, which eventually reduces to an application of \eqref{eq: res}, as in \eqref{eq: res g}. This finally finishes the proof. 

\end{proof}

\begin{remark}\label{rem: RmSn}
    The above derivation of Theorem \ref{mainthm: 3} from Theorem \ref{mainthm: 1} and \ref{mainthm: 2} is robust and can be easily adapted to obtain bilinear and multilinear Strichartz estimates on other product manifolds such as $\R^m\times\bS^n$.     
    For example, 
    one can show for all $m\geq 1$, $n\geq 3$, and $1\leq N_2\leq N_1$, there exists $\delta>0$ such that 
    $$\|e^{it\Delta}P_{N_1}f\cdot e^{it\Delta}P_{N_2}g\|_{L^2([0,1]\times\R^m\times\bS^n)}\lesssim N_2^{\frac{d-2}{2}}\left(\frac{N_2}{N_1}+\frac{1}{N_2}\right)^\delta\|f\|_{L^2(\R^m\times\bS^n)}\|g\|_{L^2(\R^m\times\bS^n)},$$
    where $d=m+n$ is the dimension of the product manifold. The case $(m,n)=(1,3)$ is of particular interest because of its energy-critical nature, and it also presents the greatest difficulty. Indeed, if $n\geq 4$, then the analogue of Theorem \ref{mainthm: 1} was already established in \cite{BGT052}; while if $m\geq 2$, the analogue of Theorem \ref{mainthm: 2} follows easily from the sharp Strichartz estimates on $\R^m\times\T$ obtained in \cite{Bar21}.
    For trilinear estimates, one can also show for all $m\geq 1$, $n\geq 2$, and $1\leq N_3\leq N_2\leq N_1$, that 
    \begin{align*}
        &\|e^{it\Delta}P_{N_1}f\cdot e^{it\Delta}P_{N_2}g\cdot e^{it\Delta}P_{N_3}h\|_{L^2([0,1]\times\R^m\times\bS^n)}\\
        \lesssim & (N_2N_3)^{\frac{d-1}{2}}\left(\frac{N_3}{N_1}+\frac{1}{N_2}\right)^\delta\|f\|_{L^2(\R^m\times\bS^n)}\|g\|_{L^2(\R^m\times\bS^n)}\|h\|_{L^2(\R^m\times\bS^n)}.
    \end{align*}  
    The above estimates then lead to the same local well-posedness as in Theorem \ref{mainthm: 4} for the corresponding cubic or quintic NLS. See also \cite{Zha21,Zha25} for related results on various compact product manifolds. 
\end{remark}

\subsection{Well-posedness: Proof of Theorem \ref{mainthm: 4}}

We briefly recall the function spaces $U^p$ and $V^p$ introduced in \cite{KT05} (see also \cite{MR1827277}),  which have been successfully employed in the context of nonlinear Schr\"odinger equations on manifolds as in \cite{HTT11, HTT14, Her13, HS15}. In the following, we use $\mathcal{M}$ to denote $\R\times\bS^3$. 

\begin{definition}[$U^p$ spaces]
Let $1\leq p < \infty$. A $U^p$-atom is a piecewise defined function $a:\mathbb{R} \rightarrow L^2(\mathcal{M})$ of the form
\begin{align*}
a=\sum_{k=1}^{K-1}\chi_{[t_{k-1},t_k)}\phi_{k-1}
\end{align*}
where  $-\infty<t_0<t_1<\ldots<t_K\leq \infty$, and $\{\phi_k\}_{k=0}^{K-1} \subset L^2(\mathcal{M})$ with $\sum_{k=0}^{K-1}\|\phi_k\|^p_{L^2(\mathcal{M})}=1$.

\noindent The atomic space $U^p(\mathbb{R};L^2(\mathcal{M}))$ consists of all functions $u:\mathbb{R}\rightarrow L^2(\mathcal{M})$ such that $u=\sum_{j=1}^{\infty}\lambda_j a_j$ for $U^p$-atoms $a_j$, $\{\lambda_j\} \in l^1$, with norm
\begin{align*}
\|u\|_{U^p(\mathbb{R},L^2(\mathcal{M}))}:=\inf\l\{\sum^{\infty}_{j=1}|\lambda_j|:u=\sum_{j=1}^{\infty}\lambda_j a_j,\lambda_j\in \mathbb{C}, \quad a_j \text{ are } U^p\text{-atoms}\r\}.
\end{align*}
\end{definition}

\begin{definition}[$V^p$ spaces]
Let $1\leq p < \infty$. We define $V^p(\mathbb{R},L^2(\mathcal{M}))$ as the space of all functions $v:\mathbb{R} \rightarrow L^2(\mathcal{M})$ such that
\begin{align*}
\|v\|_{V^p(\mathbb{R},L^2(\mathcal{M}))}:=\sup\limits_{-\infty<t_0<t_1<\ldots<t_K\leq \infty}\l(\sum_{k=1}^{K}\|v(t_k)-v(t_{k-1})\|^p_{L^2(\mathcal{M})}\r)^{\frac{1}{p}} < +\infty,
\end{align*}
where we use the convention $v(\infty)=0$. Also, we denote the closed subspace of all right-continuous functions $v:\mathbb{R}\rightarrow L^2(\mathcal{M})$ such that $\lim\limits_{t\rightarrow -\infty}v(t)=0$ by $V^p_{rc}(\mathbb{R},L^2(\mathcal{M}))$. 
\end{definition}

\begin{definition}[$X^s$ and $Y^s$ norms]\label{def: Xs Ys}
Let $s\in \mathbb{R}$. We define $X^s$ as the space of all functions $u:\R\to L^2(\mathcal{M})$, such that for all $N=2^m$, $m\geq 0$, the map $t\mapsto e^{-it\Delta}P_{N}u$ is in $U^2(\mathbb{R},L^2(\mathcal{M}))$, and for which the norm 
\begin{align*}
\|u\|_{X^s}^2=\sum_{N=2^m\geq 1} N^{2s} \|e^{-it\Delta}P_{N} u\|_{U^2(\R,L^2(\mathcal{M}))}^2
\end{align*}
is finite. We define $Y^s$ as the space of all functions $u:\R\to L^2(\mathcal{M})$, such that for all $N=2^m$, $m\geq 0$, the map $t\mapsto e^{-it\Delta}P_{N}u$ is in $V^2_{rc}(\mathbb{R},L^2(\mathcal{M}))$, and for which the norm 
\begin{align*}
\|u\|_{Y^s}^2=\sum_{N=2^m\geq 1} N^{2s} \|e^{-it\Delta}P_{N} u\|_{V^2(\mathbb{R},L^2(\mathcal{M}))}^2
\end{align*}
is finite. As usual, for a time interval $I\subset \R$, we also consider the restriction spaces $X^s(I)$ and $Y^s(I)$ defined in the standard way. 

\end{definition}

\begin{proposition}
    For $1\leq N_2\leq N_1$ and $0<\delta<\frac{1}{8}$, we have 
    $$\|P_{N_1}\widetilde{u_1}\cdot P_{N_2}\widetilde{u_2}\|_{L^2([0,1]\times \mathcal{M})}\lesssim N_2\left(\frac{N_2}{N_1}+\frac{1}{N_2}\right)^\delta\|P_{N_1}u_1\|_{Y^0}\|P_{N_2}u_2\|_{Y^0},$$
where $\widetilde{u_j}$ denotes either $u_j$ or $\overline{u_j}$.
\end{proposition}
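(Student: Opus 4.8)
The plan is to deduce the estimate from the free-solution bilinear bound of Theorem~\ref{mainthm: 3} by the now-standard transference and interpolation machinery for the $U^p$/$V^p$ spaces (see \cite{KT05}, and \cite{HTT11,HTT14,Her13,HS15} for the precise statements employed below). Given $\delta\in(0,\tfrac18)$, fix an auxiliary exponent $\delta'\in(\delta,\tfrac18)$, and abbreviate $q:=\tfrac{N_2}{N_1}+\tfrac1{N_2}$, so that $q\in(0,2]$ because $1\le N_2\le N_1$. The first point is that the proof of Theorem~\ref{mainthm: 3} is insensitive to complex conjugation of either factor: replacing $e^{it\Delta}P_{N_j}f$ by $\overline{e^{it\Delta}P_{N_j}f}=e^{-it\Delta}P_{N_j}\overline f$ merely reverses the sign of each spectral parameter $\omega$ and of the associated temporal frequency, while the conjugate of an eigenfunction on $\bS^3$ is an eigenfunction with the same eigenvalue; since the two spectral localizations, the Plancherel identities, the bilinear eigenfunction estimate of Theorem~\ref{mainthm: 1}, and the $\R\times\T$ Strichartz bound \eqref{eq: res} are all oblivious to these sign changes, Theorem~\ref{mainthm: 3} continues to hold, at exponent $\delta'$, with $e^{\pm it\Delta}$ and with a conjugate on either factor; moreover its proof uses only that the two factors are spectrally localized to annuli $\{|\xi|\sim N_1\}$ and $\{|\xi|\sim N_2\}$.

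Now set $v_j:=e^{-it\Delta}P_{N_j}u_j$, so that $P_{N_j}\widetilde{u_j}$ equals $e^{it\Delta}v_j$ or $e^{it\Delta}\overline{v_j}$, each of $v_j$ and $\overline{v_j}$ is spectrally supported in $\{|\xi|\sim N_j\}$, and $\|v_j\|_{V^2}=\|\overline{v_j}\|_{V^2}\lesssim\|P_{N_j}u_j\|_{Y^0}$. Applying the transference principle to the conjugation-robust free estimate above yields a $U^2\times U^2$ bilinear bound with constant $C_2:=N_2 q^{\delta'}$. To upgrade from $U^2$ to $V^2$ we invoke the $U^p$--$V^p$ interpolation lemma of Koch--Tataru, which in addition requires a crude bound in $U^q$ for some $q>2$. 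Two such bounds are available at $q=4$: from the $L^4$-Strichartz estimate \eqref{eq: L4 Strichartz R times T} (transferred to $U^4$) and the Cauchy--Schwarz inequality one gets the constant $N_1^{1/2}N_2^{1/2}$; and from Bernstein's inequality (Lemma~\ref{lem: Bernstein}, which gives $\|P_{N_2}v_2\|_{L^\infty_{t,x,y}}\lesssim N_2^2\|v_2\|_{L^\infty_tL^2}\lesssim N_2^2\|v_2\|_{U^4}$) together with $\|P_{N_1}v_1\|_{L^2_{t\in[0,1]}L^2_{x,y}}\lesssim\|v_1\|_{U^4}$ one gets the constant $N_2^2$. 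Taking $C:=\min\{N_1^{1/2}N_2^{1/2},\,N_2^2\}$, we have $C\ge C_2$.

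The interpolation lemma then gives
\[
\|P_{N_1}\widetilde{u_1}\cdot P_{N_2}\widetilde{u_2}\|_{L^2([0,1]\times\mathcal{M})}\lesssim C_2\bigl(\ln(C/C_2)+1\bigr)^2\,\|v_1\|_{V^2}\,\|v_2\|_{V^2}.
\]
The decisive elementary inequality is $(N_1/N_2)^{1/2}\le N_1/N_2$ (valid since $N_1\ge N_2$), which yields
\[
\frac{C}{N_2}=\min\Bigl\{\bigl(\tfrac{N_1}{N_2}\bigr)^{1/2},\,N_2\Bigr\}\le\min\Bigl\{\tfrac{N_1}{N_2},\,N_2\Bigr\}\le 2\,q^{-1},
\]
hence $C/C_2=(C/N_2)\,q^{-\delta'}\lesssim q^{-1-\delta'}$ and $\ln(C/C_2)\lesssim\ln(q^{-1})+1$. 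Since $\delta'>\delta$ and $q\le 2$, we have $q^{\delta'-\delta}\bigl(\ln(q^{-1})+1\bigr)^2\lesssim1$, so
\[
C_2\bigl(\ln(C/C_2)+1\bigr)^2\lesssim N_2\,q^{\delta'}\bigl(\ln(q^{-1})+1\bigr)^2\lesssim N_2\,q^{\delta}=N_2\Bigl(\tfrac{N_2}{N_1}+\tfrac1{N_2}\Bigr)^{\delta}.
\]
Combining this with $\|v_j\|_{V^2}\lesssim\|P_{N_j}u_j\|_{Y^0}$ completes the proof.

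The routine ingredients (the transference and interpolation lemmas, the reduction $\|v_j\|_{V^2}\lesssim\|P_{N_j}u_j\|_{Y^0}$, and the conjugation-robustness of Theorem~\ref{mainthm: 3}) are exactly as in \cite{HTT11,Her13,HS15}. The one step needing care, and the main obstacle, is the removal of the logarithmic loss produced by the $U^2\to V^2$ interpolation: the ``obvious'' crude bound $N_1^{1/2}N_2^{1/2}$ alone does not control $\ln(C/C_2)$ when $N_2$ is small while $N_1$ is large (then $q\sim1$, leaving no room to absorb a logarithm), so one genuinely needs the complementary Bernstein bound $N_2^2$; with both at hand, the identity $C/N_2\le 2q^{-1}$ shows that the accumulated logarithm is only of size $\log\bigl((\tfrac{N_2}{N_1}+\tfrac1{N_2})^{-1}\bigr)$, which the slack $\delta'-\delta>0$ precisely absorbs.
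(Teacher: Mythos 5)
Your argument is correct and is essentially the paper's own proof: the paper simply cites the derivation of Proposition 3.3 from Proposition 2.6 in \cite{HS15} (transference of the free-solution bilinear bound of Theorem \ref{mainthm: 3} to $U^2$, a crude $U^4$ bound via the $L^4$ Strichartz estimate and Bernstein, and the Koch--Tataru $U^2$--$V^2$ interpolation with the logarithm absorbed by the slack $\delta'>\delta$), which is exactly what you have written out in detail, including the conjugation-robustness and the $\min\{N_1^{1/2}N_2^{1/2},N_2^2\}$ trick that keeps the logarithm of size $\log q^{-1}$. The only cosmetic point is that $C\ge C_2$ can fail by a harmless absolute factor (e.g.\ when $N_1=N_2$, since $q\le 2$), which is absorbed into the implicit constants.
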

\begin{proof}
The proof follows the same argument as in the derivation of Proposition 3.3 from Proposition 2.6 in \cite{HS15}, with only the trivial modification needed to pass from trilinear to bilinear estimates. We would like to only mention that Bernstein's inequalities were used, and we provided those in Lemma \ref{lem: Bernstein}. 
\end{proof}

For $f\in L^1_{\text{loc}}L^2([0,\infty)\times \mathcal{M})$, let 
$$\mathscr{I}(f)=\int_0^t e^{i(t-s)\Delta}f(s)\ {\rm d}s.$$ 
By arguments identical to the proof Proposition 2.12 in \cite{HTT14}, the above proposition yields the following nonlinear estimate of the Duhamel term.   

\begin{proposition}
    Let $s\geq 1$ be fixed. Then, for $u_1,u_2,u_3\in X^s([0,1))$, it holds 
    $$\left\|\mathscr{I}\left(\prod_{k=1}^3\widetilde{u_k}\right)\right\|_{X^s([0,1))}\lesssim \sum_{j=1}^3 \|u_j\|_{X^s([0,1))}\prod_{\substack{k=1\\k\neq j}}^3\|u_k\|_{X^1([0,1))}.$$
\end{proposition}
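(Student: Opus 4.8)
The plan is to run the standard duality-and-dyadic-summation scheme — essentially the argument behind Proposition 2.12 of \cite{HTT14}, but fed with the bilinear estimate of the preceding Proposition in place of a trilinear bound. \textbf{Step 1 (duality).} By the $U^2$--$V^2$ duality together with the transference principle (cf.\ \cite{HTT14,KT05}), for $f\in L^1_{\mathrm{loc}}L^2([0,1)\times\mathcal M)$ one has
\[
\|\mathscr I(f)\|_{X^s([0,1))}\lesssim \sup_{\|v\|_{Y^{-s}([0,1))}=1}\left|\int_0^1\!\!\int_{\mathcal M}f(t,z)\,\overline{v(t,z)}\,{\rm d}z\,{\rm d}t\right|,
\]
so with $f=\widetilde{u_1}\,\widetilde{u_2}\,\widetilde{u_3}$ it suffices to bound the quadrilinear form $\iint \widetilde{u_1}\widetilde{u_2}\widetilde{u_3}\,\overline v$ by $\sum_j\|u_j\|_{X^s}\prod_{k\ne j}\|u_k\|_{X^1}$, uniformly over test functions $v$ with $\|v\|_{Y^{-s}}=1$.

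\textbf{Step 2 (Littlewood--Paley and pairing).} Decompose $u_k=\sum_{N_k}P_{N_k}u_k$ and $v=\sum_{N_0}P_{N_0}v$. By Lemma \ref{lem: fg spec supp}, the spectral support of $\prod_{k}P_{N_k}\widetilde{u_k}$ forces the two largest of $N_0,N_1,N_2,N_3$ to be comparable; order the $u$-frequencies as $N_1\ge N_2\ge N_3$. In each of the finitely many resulting configurations, group the four dyadic factors into two pairs so that each pair carries one of the two largest frequencies together with one of the two smallest, and apply Cauchy--Schwarz in $L^2_{t,z}([0,1]\times\mathcal M)$ to bound the quadrilinear integral by a product of two bilinear $L^2$-norms; each such norm then enjoys an off-diagonal or high--low gain when estimated by the preceding Proposition.

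\textbf{Step 3 (bilinear input and dyadic sum).} Applying the preceding Proposition to each bilinear factor, and using the embedding $U^2\hookrightarrow V^2$ to pass from $\|P_{N_k}u_k\|_{Y^0}$ to $\|P_{N_k}u_k\|_{X^0}$, the estimate reduces in each configuration to a dyadic inequality of the form $\sum (\tfrac{m_1}{M_1}+\tfrac1{m_1})^{\delta}(\tfrac{m_2}{M_2}+\tfrac1{m_2})^{\delta}\,a_{N_1}b_{N_2}c_{N_0}d_{N_3}\lesssim \|a\|_{\ell^2}\|b\|_{\ell^2}\|c\|_{\ell^2}\|d\|_{\ell^2}$, where $(m_i,M_i)$ is the (small,\,large) frequency of the $i$-th pair, and $a_N=N^s\|e^{-it\Delta}P_Nu_1\|_{U^2}$, $b_N=N\|e^{-it\Delta}P_Nu_2\|_{U^2}$, $d_N=N\|e^{-it\Delta}P_Nu_3\|_{U^2}$, $c_N=N^{-s}\|e^{-it\Delta}P_Nv\|_{V^2}$ have $\ell^2$-norms $\|u_1\|_{X^s}$, $\|u_2\|_{X^1}$, $\|u_3\|_{X^1}$, $\|v\|_{Y^{-s}}$ respectively. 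The hypothesis $s\ge1$ enters precisely here, to cancel all surviving positive powers of the individual $N_k$ (via $N_2^{1-s},N_3^{1-s}\le1$ and $(N_0/N_1)^s\lesssim1$ when $N_0\lesssim N_1$). For fixed top frequency I would then sum over each of the two small frequencies by Cauchy--Schwarz, exploiting the convergent geometric bound $\sum_{m\le M}(\tfrac mM+\tfrac1m)^{2\delta}\lesssim_{\delta}1$, and finish with a Cauchy--Schwarz in the two comparable top frequencies; collecting the configurations (the $X^s$-weight always residing on the top-frequency factor) produces the claimed right-hand side.

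\textbf{Main obstacle.} The genuine work is the bookkeeping in Step 2 combined with the convergence in Step 3: one must check that in \emph{every} configuration of $(N_0;N_1,N_2,N_3)$ the four factors can be paired so that \emph{both} bilinear factors carry a real gain, and that the resulting $\delta$-powers — and nothing weaker — suffice to sum at the scale-invariant exponent $s=1$, where there is no slack. This is exactly the point at which the refinement $(N_2/N_1+1/N_2)^\delta$ in Theorem \ref{mainthm: 3} over the bare $L^4$-Strichartz bound \eqref{eq: L4 Strichartz R times T} is indispensable, since a plain $L^4$ estimate would leave a logarithmic divergence in the frequency sum.
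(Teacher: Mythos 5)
Your proposal is correct and follows essentially the same route as the paper, which simply invokes the argument of Proposition 2.12 in \cite{HTT14}: duality between $U^2$ and $V^2$, Littlewood--Paley decomposition with the two highest frequencies comparable, Cauchy--Schwarz to split into two high--low bilinear factors estimated by the preceding bilinear proposition, and dyadic summation at $s\ge 1$ using the $\left(\frac{N_2}{N_1}+\frac{1}{N_2}\right)^{\delta}$ gain to avoid the logarithmic loss. Your write-up just spells out the bookkeeping that the paper delegates to \cite{HTT14}.
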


Theorem \ref{mainthm: 4} now follows from the above proposition in the usual way; see \cite{HTT11, HTT14, Her13, HS15, KV16}. More precisely, one can follow the derivation of Theorem 1.1 from Proposition 4.1 in \cite{HTT11} verbatim, with only the trivial modification required to pass from the energy-critical quintic NLS in three dimensions to the energy-critical cubic NLS in four dimensions. We would like to only mention that both the Bernstein and Sobolev inequalities were used, and we provided those in Lemma \ref{lem: Bernstein} and Lemma \ref{lem: Sob}.

\section{Open problems}\label{7}

We conclude by discussing several natural open problems that arise directly from our work.

\subsection {Anisotropic Strichartz estimate on $\R_{x_1}\times\bT_{x_2}$ of $L^\infty_{x_2}L^p_{t,x_1}$-type}
We make the following conjecture.
\begin{conj}\label{conj: Strichartz R times T}
Let $N\geq 1$. Then for all $p\geq 2$, it holds 
\begin{align}\label{eq: conj restricted Strichartz}
   &  \sup_{\xi^0\in\R\times\bZ}\l\|\varphi(t)\int_{
    \substack{\xi\in\bR \times \bZ \\
    |\xi-\xi^0|\leq N}} e^{ix_1\cdot \xi_1-it|\xi|^2} \widehat{\phi}(\xi)\ {\rm d}\xi\r\|_{L^p_{t,x_1}(\bR\times \bR)} \lea 
   (N^{1-\frac{3}{p}}+1)
   \|\phi\|_{L^2(\R\times\bT)}.
\end{align}
\end{conj}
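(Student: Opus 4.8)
The plan is to dispose of the two extreme exponents $p=2$ and $p=\infty$ directly, to combine them with the already-established $L^4$ bound \eqref{eq: res} by interpolation so as to cover $p\ge 4$, and then to isolate what is genuinely missing in the remaining window $2<p<4$. Write $F(t,x_1)=\int_{|\xi-\xi_0|\le N}e^{ix_1\xi_1-it|\xi|^2}\widehat\phi(\xi)\,{\rm d}\xi$, so the quantity to control is $\|\varphi(t)F\|_{L^p_{t,x_1}(\bR\times\bR)}$, uniformly in $\xi_0$.

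First I would handle $p=2$: by Plancherel in $x_1$, pulling out the unimodular factor $e^{-it\xi_1^2}$, and expanding the modulus, $\|\varphi(t)F\|_{L^2_{t,x_1}}^2$ is dominated by $\sum_{\xi_2,\xi_2'\in\bZ}\int_\bR\widehat\phi(\xi_1,\xi_2)\overline{\widehat\phi(\xi_1,\xi_2')}\,\widehat{\varphi^2}(\xi_2^2-\xi_2'^2)\,{\rm d}\xi_1$; since $\supp\widehat{\varphi^2}\subset[-2,2]$, and for integers $\xi_2^2=\xi_2'^2$ forces $\xi_2'=\pm\xi_2$ while $0<|\xi_2^2-\xi_2'^2|\le2$ can only happen for $\{\xi_2,\xi_2'\}\subset\{-1,0,1\}$, a Schur test yields $\|\varphi(t)F\|_{L^2_{t,x_1}}\lesssim\|\phi\|_{L^2(\bR\times\bT)}$, which is \eqref{eq: conj restricted Strichartz} at $p=2$ (the right-hand side being $\sim1$ there). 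For $p=\infty$: by Cauchy--Schwarz over the window $\{\xi\in\bR\times\bZ:|\xi-\xi_0|\le N\}$, whose product measure is $\lesssim N^2$, together with Plancherel on $\bR\times\bT$, one gets $\|\varphi(t)F\|_{L^\infty_{t,x_1}}\lesssim N\|\phi\|_{L^2}$, i.e.\ \eqref{eq: conj restricted Strichartz} at $p=\infty$. Interpolating the $L^4$ bound \eqref{eq: res} (which already carries $\sup_{\xi_0}$) with this $L^\infty$ bound, and observing that the target exponent $1-3/p$ is affine in $1/p$ and is met with equality at $p=4$ and $p=\infty$, gives \eqref{eq: conj restricted Strichartz} for all $p\in[4,\infty]$.

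The window $2<p<4$ is the hard part, and it does not follow from the above: interpolating the $p=2$ and $p=4$ bounds only produces the constant $N^{1/2-1/p}$, strictly larger than $N^{1-3/p}+1$ throughout $(2,4)$. It would suffice to prove \eqref{eq: conj restricted Strichartz} at a single exponent $p_0\in(2,4)$ --- e.g.\ the scale-invariant $p_0=3$, where the claim is the clean bound $\|\varphi(t)F\|_{L^3_{t,x_1}}\lesssim\|\phi\|_{L^2(\bR\times\bT)}$ --- after which interpolation with $p=2$, with \eqref{eq: res}, and with the $L^\infty$ bound would recover the whole conjectured curve, again because $1-3/p$ is affine in $1/p$. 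To attempt such a sub-$L^4$ estimate I would push the method of Section~\ref{sec:  Strichartz}: pass to the bilinear form $\bigl\||\varphi(t)F|^2\bigr\|_{L^{p/2}}$, unfold to expose a multilinear expression of the shape $\int\delta_0(\langle\xi_1\rangle)\,\mathbf{1}_{|\langle|\xi|^2\rangle|\lesssim1}\prod_j|\widehat\phi(\xi^{(j)})|\,{\rm d}\vec{\xi}$, and bound it through the kernel decomposition into resonant and non-resonant parts as in \eqref{eq: K1 + K2}, combined with the AM--GM splittings \eqref{eq: AM-GM 1} and \eqref{eq: AM-GM 2}; this reduces matters to counting lattice points in the intersection of a level set $\{|\xi|^2=\mathrm{const}\}$ with the hyperplane $\{\langle\xi_1\rangle=0\}$ inside a box of side $N$ in $\bR^k\times\bZ^k$.

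The main obstacle --- and the reason this is stated as a conjecture --- is precisely this counting problem below the $L^4$ threshold. The arithmetic inputs used for $p=4$ in Lemmas~\ref{lem:counting lem} and \ref{lem: main counting} (the Gauss-circle and divisor bounds) seem too weak to reach any $p<4$, and, as noted in the introduction, no $L^p$-Strichartz estimate with $p<4$ is presently known even on the flat waveguide $\bR\times\bT$. I would therefore expect the range $2<p<4$ to require either a substantially sharper estimate for these mixed real--integer lattice counts, or a different technology altogether (decoupling- or restriction-type); short of that, the scheme above establishes the conjecture for $p\in\{2\}\cup[4,\infty]$ and reduces the remaining range to a single sub-$L^4$ Strichartz estimate.
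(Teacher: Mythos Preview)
Your proposal is correct and aligns exactly with the paper's own discussion: the paper establishes the conjecture only for $p\in\{2\}\cup[4,\infty]$ via the same three ingredients (Plancherel at $p=2$, Cauchy--Schwarz at $p=\infty$, the already-proved $L^4$ bound, and interpolation), and explicitly leaves the range $2<p<4$ open, noting it would follow from the single endpoint $p=3$. Your identification of the obstacle in the sub-$L^4$ regime matches the paper's assessment that this range is genuinely unresolved.
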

The case $p=\infty$ follows from the Cauchy--Schwarz inequality. The $p=4$ case is also true, as mentioned in Remark \ref{rem: LinftyL4}. The $p=2$ case follows from a simple argument using the Plancherel identity for $\R\times\R$. By interpolation, we are missing the $2<p<4$ part of the above conjecture, which would follow from the critical case $p=3$. 
We mention that the above conjecture, if true, is sharp. The bound $N^{1-\frac3p}$ is seen to be saturated by testing against $\widehat{\phi}=\ca_{[-N,N]^{2}}$ and evaluating the $L^p_{t,x_1}([0,\frac{c}{N^2}]\times [0,\frac{c}{N}])$ norm, for some fixed small $c$. 

Interpolation between the obvious estimate
\begin{align*}
   &  \l\|\int_{\xi\in\mathcal{R}} e^{ix_1\cdot \xi_1-it|\xi|^2} \widehat{\phi}(\xi)\ {\rm d}\xi\r\|_{L^\infty_{t,x_1}(\bR\times \bR)} \lea \l(MN\r)^{\frac12}\|\phi\|_{L^2(\R\times\bT)},
\end{align*}
and the conjectured \eqref{eq: conj restricted Strichartz} for $p\in[3,4)$, would yield the refined anisotropic Strichartz estimate 
\begin{align*}
   &  \l\|\varphi(t)\int_{\xi\in\mathcal{R}} e^{ix_1\cdot \xi_1-it|\xi|^2} \widehat{\phi}(\xi)\ {\rm d}\xi\r\|_{ L^4_{t,x_1}(\bR\times \bR)} \lea
   \l(\frac{M}{N}\r)^{\frac12-\frac p8 } N^{\frac14}   \|\phi\|_{L^2}.
\end{align*}
Note that $p>3$ is equivalent to $\delta:=\frac12-\frac p8<\frac18$, which is exactly the range of $\delta$ covered by Theorem \ref{mainthm: 2}. Thus, Theorem \ref{mainthm: 2} may be viewed as positive evidence toward Conjecture  \ref{conj: Strichartz R times T}, except at the endpoint 
$p=3$. 

In the larger picture, Conjecture \ref{conj: Strichartz R times T} pertains to the pointwise behavior of the linear Schr\"odinger flow. The complication arises from the lack of dispersion in the compact $\bT$ factor. If we replace $\T$ with $\R$, and thus consider the analogous estimate on $\R^2$ corresponding to \eqref{eq: conj restricted Strichartz}, then it is not hard to show that this estimate holds for all $p\geq 2$---this follows by combining the dispersive estimates for the linear Schr\"odinger flow on both $\R_{x_1}\times\R_{x_2}$ and $\R_{x_2}$, the $TT^*$ argument, and the Hardy--Littlewood--Sobolev inequality. However, the problem becomes even more delicate on $\T^2$, where proving an analogue of Theorem \ref{mainthm: 2} would lead to the well-posedness result for the energy-critical NLS on $\T\times \bS^3$, as described in the Introduction.

\subsection{Strichartz estimate on $\R\times\bS^3$}
\label{subsec: Strichartz on RS3}
We make the following conjecture.
\begin{conj}\label{conj: Strichartz R S3}
 There holds the following Strichartz estimate on $\R\times\bS^3$
\begin{equation}
    \|e^{it\Delta}P_Nf\|_{L^p([0,1]\times \R\times\bS^3)}\lesssim N^{\sigma(p)}\|f\|_{L^2(\mathbb{R}\times \mathbb{S}^3)},
\end{equation}
for 
\[ 
\sigma(p) = 
\begin{cases}
2-\frac{6}{p}, & \text{if } p \geq \frac{10}3, \\
\frac12-\frac1{p},   & \text{if } 2 \leq p \leq \frac{10}3.
\end{cases}
\]
\end{conj}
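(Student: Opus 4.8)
The two expressions for $\sigma(p)$ are exactly the log-convex interpolants, in the variable $1/p$, of the value $\sigma(\tfrac{10}{3})=\tfrac15$ with the two free endpoints $\sigma(2)=0$ and $\sigma(\infty)=2$. The endpoint $p=2$ is mass conservation: since $e^{it\Delta}$ is unitary on $L^2(\R\times\bS^3)$ and $[0,1]$ has unit length, $\|e^{it\Delta}P_Nf\|_{L^2([0,1]\times\R\times\bS^3)}\le\|f\|_{L^2(\R\times\bS^3)}$. The endpoint $p=\infty$ is Bernstein: by Lemma \ref{lem: Bernstein} with $q=2$, $\|e^{it\Delta}P_Nf\|_{L^\infty([0,1]\times\R\times\bS^3)}\lesssim N^{2}\|f\|_{L^2(\R\times\bS^3)}$. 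Interpolating the conjectured bound at $p=\tfrac{10}{3}$ with the first gives the branch $\sigma(p)=\tfrac12-\tfrac1p$ on $[2,\tfrac{10}{3}]$; interpolating with the second gives $\sigma(p)=2-\tfrac6p$ on $[\tfrac{10}{3},\infty]$ (the already-established estimate \eqref{eq: L4 Strichartz R times T}, i.e. $\sigma(4)=\tfrac12$, lies on this branch and is consistent). Thus the whole conjecture is equivalent to the single endpoint estimate
\[
\|e^{it\Delta}P_Nf\|_{L^{10/3}([0,1]\times\R\times\bS^3)}\lesssim N^{1/5}\|f\|_{L^2(\R\times\bS^3)} .
\]

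\textbf{Step 2: set-up and kernel structure.} As in Section \ref{bilinear and WP} we replace $\Delta$ by $\Delta_\R+\Delta_{\bS^3}-\mathrm{Id}$, so the $\bS^3$-spectrum becomes the clean set of squares $\{-(k+1)^2:k\ge0\}$. Identifying $\bS^3=\mathrm{SU}(2)$, the Schr\"odinger kernel on $\bS^3$ is a central function and hence, by the Weyl character formula, a function of the geodesic distance $\theta$ alone, of theta-type shape $\tfrac{1}{\sin\theta}\sum_{j\ge1}e^{-itj^2}\,j\,\sin(j\theta)\,\widetilde{\chi}(j/N)$. After a dyadic splitting of the joint cutoff $\chi_N(\sqrt{\omega^2+(k+1)^2})$ into pieces with $|\omega|\sim2^a$, $k+1\sim2^b$, $\max(2^a,2^b)\sim N$, the $TT^*$ kernel of $e^{it\Delta}P_N$ factors on each piece as a frequency-localized Euclidean Schr\"odinger kernel on $\R$ times such a theta-type kernel on $\bS^3$. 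The estimate then becomes an $L^{p'}\!\to\!L^{p}$ bound for convolution (in $t$, $x$, and $\theta$) against this product kernel.

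\textbf{Step 3: the core estimate.} On the $\R$ factor one has the standard gain $|\int_\R e^{i\omega x-it\omega^2}\chi(\omega/2^a)\,d\omega|\lesssim\min(2^a,|t|^{-1/2})$. The genuine difficulty is the $\bS^3$ factor, for which the Schr\"odinger flow is almost periodic: one must run a Hardy--Littlewood circle-method analysis on $t\in[0,1]$, splitting into major arcs around rationals $a/q$ with $q\lesssim N$ --- where the theta-kernel partially refocuses with a Gauss-sum gain $\sim q^{-1/2}$ near the revival times, retaining a $|t-a/q|^{-1}$-type Euclidean behaviour off them --- and minor arcs, where essentially full $\bS^3$-dispersion is available. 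Feeding in the extra $|t|^{-1/2}$ from the $\R$ direction and summing over arcs via Hardy--Littlewood--Sobolev and Young should then upgrade the known $\bS^3$-Strichartz bounds (which saturate near $L^4$, following Burq--G\'erard--Tzvetkov and Herr) to the target $L^{10/3}$ bound on $\R\times\bS^3$, the breakpoint $\tfrac{10}{3}$ and exponent $\tfrac15$ reflecting the gain of exactly one extra Euclidean dimension of dispersion added to the three-sphere analysis. Two alternatives, closer to the spirit of this paper, are: (i) to pass to $\mathrm{SU}(2)$ and use the Euler-angle (Wigner-$D$) product structure of the eigenfunctions to recast the estimate as a decoupling/discrete-restriction problem for a hypersurface over $\R\times\T^2$, thereby linking the present conjecture to Conjecture \ref{conj: Strichartz R times T}; or (ii) to combine the sharp bilinear eigenfunction estimate (Theorem \ref{mainthm: 1}) with the kernel decomposition method of Section \ref{sec:  Strichartz} to extract bilinear refinements and then bootstrap to the linear $L^{10/3}$ bound by a broad--narrow argument --- though (ii) will generically cost an $N^\varepsilon$.

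\textbf{Main obstacle.} The crux is the absence of a dispersive estimate for $e^{it\Delta}$ on $\R\times\bS^3$ that is uniform for $|t|$ of order one: the compact factor forces recurrences, so the kernel never decays like $|t|^{-2}$ on $[0,1]$. Turning the partial dispersion that survives into the \emph{sharp} power $\tfrac15$ (rather than $\tfrac15+\varepsilon$) at $p=\tfrac{10}{3}$ appears to require either an $\ell^2$-decoupling theorem adapted to $\bS^3=\mathrm{SU}(2)$ --- which is not presently available --- or a Gauss-sum bookkeeping on the $\bS^3$ side genuinely more delicate than the $\R\times\T$ analysis of Section \ref{sec:  Strichartz}. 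Since even a nontrivial bound for a single $p<4$ would already be new on $\R\times\bS^3$, this last step is where the real work lies.
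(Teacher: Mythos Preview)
This statement is presented in the paper as an open \emph{conjecture} (Section~\ref{7}, on open problems), not as a proven result. The paper establishes only the cases $p=2$ (unitarity), $p=4$ (via \eqref{eq: L4 Strichartz R times T}), and $p=\infty$ (Bernstein, Lemma~\ref{lem: Bernstein}), which by interpolation cover $p\ge 4$; it then argues sharpness of the conjectured exponents but explicitly leaves the range $2<p<4$ open.

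Your Step~1 reduction to the single endpoint $p=\tfrac{10}{3}$ is correct and is the natural way to frame the remaining problem. However, Steps~2 and~3 are heuristic outlines of possible strategies rather than a proof, and you acknowledge as much in your ``Main obstacle'' paragraph. In particular: the major/minor-arc analysis you sketch for the $\bS^3$ kernel has not been shown to close at the sharp power $\tfrac15$ without an $N^\varepsilon$ loss; alternative (i) presupposes an $\ell^2$-decoupling theorem adapted to $\mathrm{SU}(2)$, which is not available; and alternative (ii), as you note, would generically cost an $\varepsilon$. So your proposal is a reasonable survey of approaches --- in fact broadly aligned with the paper's own commentary linking this conjecture to Conjecture~\ref{conj: Strichartz R times T} and to the methods of \cite{HS25} --- but it is not a proof, and there is no proof in the paper to compare it against.
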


The $p=\infty$ case as usual follows from Bernstein's inequality as in Lemma \ref{lem: Bernstein}. The $p=4$ case was provided in \eqref{eq: L4 Strichartz R times T}. 
This conjecture, if true, is also sharp.
The ``scale-invariant'' bound corresponding to $\sigma(p)=2-\frac{6}{p}$ is seen to be saturated by functions of the product form 
$$f(x,y)=g(x)\cdot h(y),\ x\in \R, \ y\in\bS^3, $$
for which we take $\widehat{g}(\omega)=\frac1{\sqrt{N}}\ca_{[-N,N]}(\omega)$, $\omega\in\R$, and take for a fixed $y_0\in\bS^3$,
$$h(y)=\sum_{j}N^{-\frac32}\beta\l(\frac{\lambda_j}{N}\r)e_j(y)\overline{e_j(y_0)},\ y\in\bS^3,$$
where $(\lambda_j)$ is the sequence of growing eigenvalues of $\sqrt{-\Delta_{\bS^3}}$ counted with multiplicities, $(e_j)$ is a corresponding orthonormal sequence of eigenfunctions, and $\beta$ is a bump function in $C^\infty_0((\frac12,2))$. We refer to the last section of \cite{HS25} for a detailed computation. The other bound corresponding to $\sigma(p)=\frac12-\frac1p$, coincides with the $2\leq p\leq 4$ piece of Sogge's $L^p$ bound for eigenfunctions of $\bS^3$, and to saturate the Strichartz bound it suffices to let $f$ be the highest weight spherical harmonics on $\bS^3$. 

Our formulation of Conjecture \ref{conj: Strichartz R S3} is primarily motivated and inspired by the recent work of Huang and Sogge in \cite{HS25}. They proved the Strichartz estimates on $\bS^2$, sharp up to $\varepsilon$-factors,
\begin{align*}
    \|e^{it\Delta_{\bS^2}}P_Nf\|_{L^p([0,1]\times \bS^2)}\lesssim_\varepsilon N^{\sigma(p)+\varepsilon}\|f\|_{L^2(\mathbb{S}^2)},
\end{align*}
for 
\[ 
\sigma(p)= 
\begin{cases}
1-\frac{4}{p}, & \text{ if } p \geq \frac{14}3, \\
\frac12\l(\frac12-\frac1p\r),   & \text{ if } 2 \leq p \leq \frac{14}3.
\end{cases}
\]
Analogous to Conjecture \ref{conj: Strichartz R S3}, the above Strichartz estimate comprises a scale-invariant regime alongside a regime saturated by the highest weight spherical harmonics.  This result was obtained using sophisticated tools, including microlocal analysis and bilinear oscillatory integral estimates. It remains an open question whether Conjecture \ref{conj: Strichartz R S3} can be established via similar techniques or by more elementary methods.
\begin{remark}\label{rem: R2S2}
The same Strichartz estimate as in Conjecture \ref{conj: Strichartz R S3} may be conjectured on $\T\times\bS^3$. 
Similarly, one may conjecture the following %sharp 
Strichartz estimate on 
$\R^m\times\T^{2-m}\times \bS^2$, $m=0,1,2$:
\begin{equation}
    \|e^{it\Delta}P_N f\|_{L^p([0,1]\times \R^m\times\T^{2-m}\times \bS^2)} 
    \;\lesssim\; N^{\sigma(p)} \|f\|_{L^2(\R^m\times\T^{2-m}\times \bS^2)},\footnote{Applying the Strichartz estimates on $\R^m\times \T^{2-m}$, together with Bernstein's inequality on $\bS^2$, we obtain that this Strichartz inequality holds on $\R^2\times\bS^2$, globally in time, for all $p\geq 4$; on $\R\times\T\times\bS^2$ for all $p\geq 4$; and on $\T^2\times\bS^2$ for all $p>4$.}
\end{equation}
with exponent
\[
\sigma(p) =
\begin{cases}
2-\tfrac{6}{p}, & p \geq \tfrac{22}{7}, \\[0.5em]
\tfrac{1}{4}-\tfrac{1}{2p}, & 2 \leq p \leq \tfrac{22}{7}.
\end{cases}
\]
An analogous estimate on the product space $\bS^2\times\bS^2$ would be
\begin{equation}
    \|e^{it\Delta}P_N f\|_{L^p([0,1]\times \bS^2\times\bS^2)} 
    \;\lesssim\; N^{\sigma(p)} \|f\|_{L^2(\bS^2\times\bS^2)},\footnote{Similarly, applying the Strichartz estimate together with Bernstein's inequality on $\bS^2$, we obtain that this Strichartz inequality on $\bS^2\times\bS^2$ holds for all $p\geq 6$.
}
\end{equation}
with
\[
\sigma(p) =
\begin{cases}
2-\tfrac{6}{p}, & p \geq \tfrac{10}{3}, \\[0.5em]
\tfrac{1}{2}-\tfrac{1}{p}, & 2 \leq p \leq \tfrac{10}{3}.
\end{cases}
\]
Note that the thresholds, $\tfrac{22}{7}$ and $\tfrac{10}{3}$, lie strictly below $4$. 
Although highly conjectural, such estimates---if valid---would lend support to critical 
well-posedness of the corresponding cubic NLS. See also \cite{Zha20,Zha21} for Strichartz estimates on general compact symmetric spaces. 
\end{remark}

\bibliographystyle{amsplain}
\bibliography{ref}
\end{document}